\newtheorem{thm}{Theorem}[section]
\newtheorem{prop}[thm]{Proposition}
\newtheorem{claim}[thm]{Claim}
\newtheorem{lemma}[thm]{Lemma}
\newtheorem{cor}[thm]{Corollary}
\newtheorem{fact}{Fact}
\theoremstyle{definition}
\newtheorem{defi}[thm]{Definition}
\def\eps{\varepsilon}
\newcommand*{\rom}[1]{\expandafter{\romannumeral #1\relax}}
\numberwithin{equation}{section}
\begin{document}
\title[]{Minimum degree conditions for Hamilton $l$-cycles in $ k $-uniform hypergraphs}
\author{Jie Han}
\address{School of Mathematics and Statistics and Centre for Applied Mathematics, Beijing Institute of Technology, Beijing, China}
\email{\tt han.jie@bit.edu.cn}

\author{Lin Sun}
\address{School of Mathematics and Statistics, Qingdao University, Qingdao 266071, PR China. School of Mathematics, Shandong University,
	Jinan, China.}
\email{linsun77@163.com}

\author{Guanghui Wang}
\address{School of Mathematics, Shandong University, Jinan, China.}
\email{ghwang@sdu.edu.cn}	

\begin{abstract}
We show that for $ \eta>0 $ and sufficiently large $ n $, every 5-graph on $ n $ vertices with $\delta_{2}(H)\ge (91/216+\eta)\binom{n}{3}$ contains a  Hamilton 2-cycle.
This minimum 2-degree condition is asymptotically best possible.
Moreover, we give some related results on  Hamilton $ \ell $-cycles with $ d $-degree for  $\ell\le d \le k-1$  and  $1\le \ell < k/2$.
\end{abstract}	
\maketitle	
\section{Introduction}
The study of Hamilton cycles is an important topic in graph theory and extremal combinatorics. 
Dirac's classic result \cite{1952Some} states that every graph whose minimum degree is at least as large as half the size of the vertex set contains a Hamiltonian cycle. 
In recent years, extending Dirac’s theorem to hypergraphs has attracted a great deal of attention.
Given $ k\ge2 $, a \emph{$ k $-uniform hypergraph} $H$ (in short, \emph{$ k $-graph}) consists of a vertex set $V$ and an edge set $E\subseteq \binom{V}{k}$, where every edge is a $ k $-element subset of $ V $. 
We denote by  $ e(H):=|E| $ the numbers of edges in $H$.
Given a $ k $-graph $H=(V,E)$ and a vertex set $S\in \binom{V}{d} $, we define $ N(S) $ to be the family of $T\in\binom{V}{k-d} $ such that $ T\cup S\in E $ and $ \deg_H(S):=|N(S)| $. 
The \emph{minimum $ d $-degree} of $H$ denoted by $ \delta_{d}(H) $ is the minimum of $ \deg_{H}(S) $ over all $ d $-element vertex sets $S$ in $H$.
For $ 1\le \ell <k $, we say that a $ k $-graph is an \emph{$ \ell $-cycle} if there exists a cyclic ordering of its vertices such that every edge consists of $ k $ consecutive vertices and two  consecutive edges (in the natural order of the edges) share exactly $ \ell $ vertices. 
If the ordering is not cyclic, we call it an \emph{$ \ell $-path} and we say the first and last $ \ell $ vertices are the ends of the path. 
By the \emph{length} of an $ \ell $-path, we mean the number of edges contained in it.
In $ k $-graphs, a $ (k-1) $-cycle is often called a \emph{tight cycle} and a $ (k-1) $-path is often called a \emph{tight path}.
A $ k $-graph on $ n $ vertices contains a \emph{Hamilton $ \ell $-cycle}  if it contains an $ \ell $-cycle as a spanning subhypergraph. 
Note that a  Hamilton $ \ell $-cycle of a $ k $-graph on $ n $ vertices contains exactly $ n/(k-\ell) $ edges, implying that $(k-\ell)\mid n$.
For $ (k-\ell)\mid n $ and $ 1\le d\le k-1 $, we define the \emph{Dirac threshold} $ h_{d}^{\ell}(k,n) $ to be the smallest integer $ h $ such that every	$ n $-vertex $ k $-graph $ H $ satisfying $ \delta_{d}(H)\ge h $ contains a Hamilton $ \ell $-cycle. 
Let  \[h_{d}^{\ell}(k):=\limsup_{n\to \infty}h_{d}^{\ell}(k,n)/\binom{n}{k-d}. \]

Dirac thresholds of hypergraphs were first investigated by Katona and Kierstead \cite{2006Hamiltonian}, who showed that $ 1/2 \le h_{k-1}^{k-1}(k) \le 1-1/(2k) $ and conjectured  $ h_{k-1}^{k-1}(k) =1/2$, which was confirmed by R\"odl, Ruci\'nski, and Szemer\'edi \cite{VOJTECH2006A,2008An}, that is, for $ \eps>0 $ and large $ n $, $\delta_{k-1}(H)\ge n/2+\eps n$ guarantees a Hamilton tight cycle. 
When $(k-\ell)\mid k$, a tight cycle on $ V $ trivially contains an $ \ell $-cycle on $ V $.
So the asymptotic Dirac threshold $ h_{k-1}^{\ell}(k)=1/2$  follows as a consequence of $ h_{k-1}^{k-1}(k) =1/2$ and a construction of Markstr\"om and Ruci\'nski \cite{Markstr2011Perfect}. 
When $(k-\ell)\nmid k$, there have been a series of works on  the threshold $ h_{k-1}^{\ell}(k) $.
We collect these results in the following theorem.
\begin{thm}\cite{H2010Dirac,2010Loose,MR2652102,VOJTECH2006A,2008An} 
For any $k> \ell \ge 1$, we have	
\[h_{k-1}^\ell(k)= 
\begin{cases}
	1/2 & {(k-\ell) \mid k}\\
	\frac{1}{\lceil \frac{k}{k-\ell}\rceil (k-\ell)} & {(k-\ell) \nmid k}.
\end{cases}
\]
\end{thm}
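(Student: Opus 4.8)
The plan is to obtain the theorem by assembling results already in the literature, treating the two cases of the dichotomy separately; in each case one matches an explicit extremal $k$-graph against a Dirac-type existence theorem. Observe first that the case $\ell=k-1$ falls under $(k-\ell)\mid k$ (here $k-\ell=1$), where the value $1/2$ is exactly the R\"odl--Ruci\'nski--Szemer\'edi theorem \cite{VOJTECH2006A,2008An} for tight Hamilton cycles together with its lower bound construction; the general statement extends this picture.

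For the lower bounds I would use two families of extremal examples, and in each regime the claimed threshold is the larger of the two bounds they yield. The first is the \emph{space barrier}: put $m:=\lceil k/(k-\ell)\rceil$, fix a set $S\subseteq V$ with $|S|=\lceil n/(m(k-\ell))\rceil-1$, and let $H$ be the $k$-graph of all $k$-sets meeting $S$. A Hamilton $\ell$-cycle has exactly $n/(k-\ell)$ edges, and in the cyclic order realizing an $\ell$-cycle each vertex lies in at most $m$ (consecutive) edges; hence meeting every edge with a vertex of $S$ would force $|S|\ge n/(m(k-\ell))$, which is false, so $H$ has no Hamilton $\ell$-cycle. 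On the other hand a $(k-1)$-set disjoint from $S$ extends to an edge only by a vertex of $S$, so $\delta_{k-1}(H)=|S|$, giving $h_{k-1}^\ell(k)\ge \frac{1}{\lceil k/(k-\ell)\rceil(k-\ell)}$. The second is a \emph{divisibility/parity barrier} of Markstr\"om and Ruci\'nski \cite{Markstr2011Perfect}: roughly, fix $B\subseteq V$ with $|B|$ just below $n/2$ and of a suitable parity, and let $H$ consist of all $k$-sets $e$ with $|e\cap B|$ even; reading off the parities of $|e_i\cap B|$ along any purported Hamilton cycle produces a divisibility obstruction ruling it out, while $\delta_{k-1}(H)=n/2-O(1)$, so $h_{k-1}^\ell(k)\ge 1/2$ for every $\ell$. (Verifying the obstruction for $\ell$-cycles rather than tight cycles is part of the substance of \cite{Markstr2011Perfect}.) When $(k-\ell)\mid k$ we have $m=k/(k-\ell)$, so the space barrier gives only $1/k$ and the parity barrier dominates; when $(k-\ell)\nmid k$ the upper bound below shows the space barrier is sharp.

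For the upper bounds, when $(k-\ell)\mid k$ I would invoke \cite{VOJTECH2006A,2008An}: $\delta_{k-1}(H)\ge(1/2+\eps)n$ forces a tight Hamilton cycle, and taking the edges that begin at positions $1,\,(k-\ell)+1,\,2(k-\ell)+1,\dots$ of its cyclic ordering yields a spanning $\ell$-cycle (using $(k-\ell)\mid n$), so $h_{k-1}^\ell(k)\le 1/2$. When $(k-\ell)\nmid k$ the matching statement that $\delta_{k-1}(H)\ge(\frac{1}{\lceil k/(k-\ell)\rceil(k-\ell)}+\eps)n$ suffices is exactly what is proved by the absorption method in \cite{2010Loose} for $(k,\ell)=(3,1)$, in \cite{MR2652102} more generally, and in \cite{H2010Dirac} for $1\le\ell<k/2$; the common scheme is (i) reserve an absorbing $\ell$-path that can swallow any small set of leftover vertices, (ii) cover almost all remaining vertices by boundedly many $\ell$-paths, and (iii) apply a connecting lemma to splice all the pieces and the absorber into a single Hamilton $\ell$-cycle.

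The main obstacle is step (ii) in the non-divisible case: producing an almost-perfect $\ell$-path tiling is precisely where the constant $\frac{1}{\lceil k/(k-\ell)\rceil(k-\ell)}$ enters and must be shown optimal, whereas the absorbing and connecting lemmas only need $\delta_{k-1}(H)=\Omega(n)$. Combining the matching upper and lower bounds in each regime then yields the stated formula.
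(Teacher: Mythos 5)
Your proposal assembles the theorem from the literature in the same way the paper does (the paper offers no proof of its own for this catalogue theorem), and your identification of the relevant upper-bound references and of the tight-to-$\ell$-cycle reduction when $(k-\ell)\mid k$ matches the paper's reasoning.

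There is, however, a genuine error in your lower-bound discussion. You claim the Markstr\"om--Ruci\'nski parity barrier yields $h_{k-1}^\ell(k)\ge 1/2$ for \emph{every} $\ell$. This directly contradicts the theorem: when $(k-\ell)\nmid k$ we have $k-\ell\ge2$ and $\lceil k/(k-\ell)\rceil\ge2$, so the asserted threshold is $\frac{1}{\lceil k/(k-\ell)\rceil(k-\ell)}\le 1/4 < 1/2$, which means a $k$-graph with $\delta_{k-1}\approx n/2$ \emph{must} contain a Hamilton $\ell$-cycle, so the parity construction cannot be an obstruction there. The divisibility argument exploits the fact that in a Hamilton $\ell$-cycle every vertex has the same degree $m=k/(k-\ell)$, which holds only when $(k-\ell)\mid k$; when $(k-\ell)\nmid k$ the vertex degrees split between $\lfloor k/(k-\ell)\rfloor$ and $\lceil k/(k-\ell)\rceil$, the sum $\sum_i|e_i\cap B|$ is no longer a fixed multiple of $|B|$, and the parity/divisibility contradiction does not close. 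Indeed your own later remark that ``when $(k-\ell)\nmid k$ the upper bound below shows the space barrier is sharp'' is inconsistent with the claimed $\ge 1/2$ bound. The corrected dichotomy is: the space barrier gives $\frac{1}{\lceil k/(k-\ell)\rceil(k-\ell)}$ in both regimes, the Markstr\"om--Ruci\'nski construction gives $1/2$ \emph{only} in the divisible regime, and in each regime the stated threshold is the larger of the bounds that actually apply.
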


For sufficiently large $ n $, some exact thresholds $ h_{k-1}^{\ell}(k,n) $ are known: for $ k =3$ and $\ell= 2$~\cite{2011Dirac} and for $ k \ge 3  $ and $ 1\le \ell < k/2 $~\cite{CZYGRINOW2014TIGHT,Jie2015Minimum}. 
For $d=k-2$, Bu\ss, H\`an and Schacht \cite{Bu2013Minimum} showed that $h_{2}^{1}(3)=\frac{7}{16}$. 
Han and Zhao \cite{Han2015Minimum} showed the exact result for $ h_{2}^{1}(3,n) $.
Bastos, Mota, Schacht, Schnitzer and Schulenburg \cite{Bastos2016Loose} determined $ h_{k-2}^\ell(k) = 1-\left(1-\frac{1}{2(k-\ell)} \right)^{2} $ for $ k\ge4 $, $1\le \ell<k/2$ and got the exact result in \cite{J2017Loose},  which generalizes the previous results for 3-graphs.
For tight cycles, which might be considered as the most difficult and interesting case, Polcyn, Reiher, R\"odl, Ruci\'nski, Schacht and Szemer\'edi \cite{Polcyn2020Minimum,Reiher2019Minimum}  showed the asymptotic Dirac threshold $h_{1}^{2}(3)=h_{2}^{3}(4)=5/9$. 
Lang and Sanhueza-Matamala	 \cite{2020Minimum} proved that $ h_{k-2}^{k-1}(k)=5/9$ for all $ k \ge 3 $ (the same	result was also proved independently by Polcyn,  Reiher, R\"odl, and Sch\"ulke \cite{2020On}) and also provided a general upper bound of $ 1-1/(2(k-d)) $ for $ h_{d}^{k-1}(k)$, narrowing the gap to the lower bound of $ 1-1/\sqrt{k-d}$ due to Han and Zhao \cite{2016Forbidding}.
For $ d\le k-3 $, much less is known under $ d $-degree conditions. 
Recently, H\`an, Han and Zhao \cite{han2021minimum} determined the exact value of $ h_{d}^{k/2}(k,n) $ for any even integer $ k \ge 6 $, integer $ d $ such that  $ k/2 \le d \le k-1 $ and sufficiently $ n $. 
Gan, Han, Sun and Wang \cite{large} determined the following Dirac thresholds for $1\le\ell < k/2$.

\begin{thm} [\cite{large}]\label{thmhh}
	Suppose that $k\ge3$, $k-\ell\le d < 2\ell \le k-1$ such that $2k-2\ell \ge (2(2k-2\ell-d)^2+1)(k-d-1)+1$ or suppose that $ k $ is odd, $ k\ge7,\ell=(k-1)/2 $  and  $d=k-3$, then 
	\[
	h_{d}^{\ell}(k){\tiny } = 1-\left(1-\frac{1}{2(k-\ell)}\right)^{k-d}.
	\] 
\end{thm}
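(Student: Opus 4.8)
The plan is to establish the upper bound on $h_d^\ell(k)$ by the absorption method; the matching lower bound comes from a space-barrier construction. Throughout write $m:=k-\ell$, and note that $\ell<k/2$ forces $m>k/2>\ell$, while $d\ge k-\ell=m$ gives $d>\ell$. For the lower bound, fix $B\subseteq V$ with $|B|=\lceil n/(2m)\rceil-1$ and let $H$ be the $k$-graph whose edges are exactly the $k$-subsets of $V$ meeting $B$. Since $\ell<k/2$, every vertex lies in at most $\lfloor(k-1)/m\rfloor+1=2$ edges of any $\ell$-cycle, so a Hamilton $\ell$-cycle (which has $n/m$ edges, each meeting $B$) would need $2|B|\ge n/m$, which is false; meanwhile every $d$-set disjoint from $B$ has degree $\binom{n-d}{k-d}-\binom{n-d-|B|}{k-d}=\bigl(1-(1-\tfrac1{2m})^{k-d}-o(1)\bigr)\binom{n}{k-d}$, so $h_d^\ell(k)\ge 1-(1-\tfrac1{2m})^{k-d}=:c$.

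For the upper bound, fix $\eta>0$, take $n$ large with $m\mid n$, and assume $\delta_d(H)\ge(c+\eta)\binom{n}{k-d}$. The first ingredient is a connecting/reservoir lemma. Extending an $\ell$-path by one edge amounts to choosing $d-\ell$ auxiliary new vertices and then, by $\delta_d$, one of at least $(c+\eta)\binom{n}{k-d}$ completions of the resulting $d$-set; iterating this a bounded number of times shows that the set of ordered $\ell$-tuples reachable from a fixed end-tuple has size $\Omega(n^\ell)$, and a standard merging step then joins any two disjoint ordered $\ell$-tuples by $\Omega(n^{s})$ $\ell$-paths of bounded length, for a suitable $s=s(k)\ge1$. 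A random reservoir $R$ of size $\mu n$ inherits a robust version of this, so any bounded family of disjoint end-pairs can be connected simultaneously through $R$ using only a few vertices of $R$.

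The second ingredient is an absorbing lemma: for an $m$-set $S\subseteq V$, call a constant-size $\ell$-path $Q$ disjoint from $S$, with ends $\vec a,\vec b$, an \emph{absorber for $S$} if $V(Q)\cup S$ also spans an $\ell$-path with ends $\vec a,\vec b$. The degree condition gives $\Omega(n^{|V(Q)|})$ absorbers for every $S$, so picking a random sub-family and chaining it via the connecting lemma produces an $\ell$-path $P_0$ with $|V(P_0)|\le\alpha n$ that can absorb any $W\subseteq V\setminus V(P_0)$ with $m\mid|W|$ and $|W|\le\alpha^2n$. The third ingredient is an almost-cover lemma: the same degree condition forces all but $o(n)$ vertices to lie in $O(1)$ vertex-disjoint $\ell$-paths. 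Granting these, the assembly is routine: set aside $R$ and $V(P_0)$, cover $V\setminus(R\cup V(P_0))$ up to $o(n)$ vertices by $O(1)$ disjoint $\ell$-paths, splice these and $P_0$ into one $\ell$-cycle through $R$, choosing which vertices of $R$ to use so that the leftover set $W$ has $m\mid|W|$ and $|W|\le\alpha^2n$, and absorb $W$ into $P_0$.

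The hard part is the almost-cover lemma: converting the global $d$-degree bound into an almost-perfect tiling by short $\ell$-paths sharply at $c$. The intended route is weak hypergraph regularity --- alternatively, a fractional tiling obtained by LP duality against the degree condition, then upgraded to an integral almost-tiling --- combined with a stability analysis showing that a near-minimal counterexample must look essentially like the space barrier above; the arithmetic hypothesis $2k-2\ell\ge(2(2k-2\ell-d)^2+1)(k-d-1)+1$ is precisely what makes the unavoidable rounding and divisibility losses in the reduced structure negligible against the $\eta$-slack. In the special case $k$ odd, $\ell=(k-1)/2$, $d=k-3$, the generic tiling bound is too weak and must be replaced by a more hands-on analysis of the link $(k-d)$-graphs of pairs of vertices, which is where the remaining technical work concentrates.
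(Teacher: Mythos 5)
This theorem is not proved in the present paper: it is quoted verbatim from \cite{large} as background, and the paper's own contribution is the pair of new upper bounds $h_d^\ell(k)\le\max\{t(k,d,\ell),1/3\}$ and $h_\ell^\ell(k)\le\max\{(1/2)^{(k-\ell)/\ell},t(k,\ell,\ell)\}$ (Theorems~\ref{thmh2} and~\ref{thmh}), proved by the absorption framework assembled in Section~3. So there is no in-paper proof to match your attempt against; what I can do is compare your outline with the framework the paper (and \cite{large}) actually uses, and with the specific lemmas it invokes.

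Your lower-bound argument is correct and is exactly the space-barrier construction $H_{k,\ell}$ discussed around~\eqref{eq:lowerbd}. Your upper-bound plan (connecting lemma, reservoir, absorbing path, almost-cover, then assemble) is the right framework, and your identification of the almost-cover step as ``the hard part'' is accurate: in this paper the corresponding ingredient is the path cover lemma, Lemma~\ref{patlem1}, whose minimum-degree hypothesis is exactly the tiling threshold $t(k,d,\ell)$, and the whole point of Theorem~\ref{thmhh} is to pin down $t(k,d,\ell)$ (equivalently, a fractional $Y_{k,2\ell}$-tiling threshold) at the value $1-(1-\tfrac1{2(k-\ell)})^{k-d}$ under the stated arithmetic hypothesis.

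The two concrete gaps are as follows. First, your absorbing lemma asserts flatly that ``the degree condition gives $\Omega(n^{|V(Q)|})$ absorbers for every $S$.'' This is precisely the claim the paper goes out of its way to avoid: Lemma~\ref{abslem} is built from the swapping–absorber idea of \cite{Reiher2019Minimum} plus the lattice/reachability machinery of Lemma~\ref{part} \emph{because} not every $(k-\ell)$-set need have many direct absorbers under these degree conditions. The paper's explicit remark that the absorbing lemma from \cite{large} ``only works for $d\ge k-\ell$'' tells you that even in the range of Theorem~\ref{thmhh} the absorber count is a genuine lemma, not a one-line consequence of $\delta_d$; as written, your step is a claim, not an argument. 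Second, everything that actually produces the sharp constant $1-(1-\tfrac1{2(k-\ell)})^{k-d}$ is concentrated in the almost-cover lemma, and your proposal offers only a list of plausible tools (weak regularity / LP duality / stability) with no computation; in particular, your gloss that the hypothesis $2k-2\ell\ge(2(2k-2\ell-d)^2+1)(k-d-1)+1$ controls ``rounding and divisibility losses'' is a guess — the paper's own evidence (the discussion after Theorem~\ref{thm1} and the $\alpha=1/6$ vs.\ $\alpha<1/7$ remark) points to this hypothesis entering through the fractional-tiling estimate itself, not through rounding. The exceptional case $k$ odd, $\ell=(k-1)/2$, $d=k-3$ is likewise left entirely untouched. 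In short: right skeleton, but the load-bearing lemmas are asserted rather than proved.
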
 

The following ``space-barrier'' construction shows that the minimum degree conditions in the aforementioned results in \cite{Bastos2016Loose,Bu2013Minimum,large,2010Loose,MR2652102} are best possible, namely,
\begin{equation}
\label{eq:lowerbd}
h_{d}^{\ell}(k){\tiny } \ge 1-\left(1-\frac{1}{2(k-\ell)}\right)^{k-d}.
\end{equation}
Given $k\ge3$, $1\leq \ell<k/2$, $ 1\leq d\le k-1 $ with $(k-\ell)\mid n$,
let $H_{k,\ell}:=(V,E)$ be an $ n $-vertex $ k $-graph such that  $ E $ consists of all $ k $-sets that intersect $ A\subseteq V $, where $ |A|=\lceil \frac{n}{2(k-\ell)}\rceil-1 $. 
Note that an $\ell$-cycle  on $ n $ vertices contains $ n/(k-\ell) $ edges and each vertex is contained in at most two edges of any $\ell$-cycle for $ \ell<k/2 $. 
So $ H_{k,\ell} $ contains no Hamilton $ \ell $-cycle and \[\delta_d(H_{k,\ell})=\binom{n-d}{k-d}-\binom{n-|A|-d}{k-d}=\left(1-\left(1-\frac{1}{2(k-\ell)}\right)^{k-d}-o(1)\right) \binom{n}{k-d}.\] 
Letting $n$ tend to infinity,  we obtain~\eqref{eq:lowerbd}.

We expect more Dirac thresholds taking the value of the space-barrier (as in Theorem~\ref{thmhh}) for all $\ell < k/2$. 
However, this is not true for large $\ell$.
Indeed, as noticed in \cite{2016Forbidding}, for $ k -\ell=o(\sqrt{k-d})) $, we have $ h_d^\ell(k) \to 1 $ as $ (k-d )\to \infty $ regardless of $ (k-\ell) \mid k $ or not.

\subsection{Our results: $ h_2^2(5) $}
In this paper we study more thresholds of Hamilton $ \ell $-cycles and focus on the case $\ell<k/2$. 
The following theorem gives the Dirac threshold for $ k=5 $, $ \ell=(k-1)/2=2 $  and  $d=k-3=2 $.
\begin{thm}\label{52}
\[  h^{2}_{2}(5)=\frac{91}{216}.	\]
\end{thm}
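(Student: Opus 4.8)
First, the lower bound $h^2_2(5)\ge\tfrac{91}{216}$ is nothing but the space-barrier bound \eqref{eq:lowerbd} with $k=5$, $\ell=2$, $d=2$, since $1-(1-\tfrac16)^3=1-\tfrac{125}{216}=\tfrac{91}{216}$; here the extremal $5$-graph is $H_{5,2}$, and the reason it has no Hamilton $2$-cycle is a budget deficit — each of the $n/3$ edges of a spanning $2$-cycle would have to meet the set $A$ of size $\lceil n/6\rceil-1$, while each vertex of $A$ lies in at most two edges of any $2$-cycle. So the work is the upper bound: for every $\eta>0$, every $n$ large with $3\mid n$, and every $5$-graph $H$ on $n$ vertices with $\delta_2(H)\ge(\tfrac{91}{216}+\eta)\binom n3$, I want to produce a Hamilton $2$-cycle. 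The plan is to run the absorption method for $\ell$-cycles with $\ell<k/2$, following the architecture of \cite{large}, but replacing the steps for which the generic argument there needs the structural inequality $2k-2\ell\ge(2(2k-2\ell-d)^2+1)(k-d-1)+1$ — which for $(5,2,2)$ reads $6\ge67$ and fails — by arguments tailored to these parameters.

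The four ingredients I would set up are as follows. \textbf{Absorbing path.} First I would prove that there are $\beta>0$ and a $2$-path $P_{\mathrm{abs}}\subseteq H$ with $|V(P_{\mathrm{abs}})|\le\beta n$ such that $V(P_{\mathrm{abs}})\cup W$ spans a $2$-path with the same ordered pair of ends as $P_{\mathrm{abs}}$ for every $W\subseteq V(H)\setminus V(P_{\mathrm{abs}})$ with $|W|\le\beta^2 n$ and $3\mid|W|$; note $|V(P_{\mathrm{abs}})|\equiv2\pmod3$, so absorbable sets must have size divisible by $k-\ell=3$ and the gadgets absorb triples of vertices. This follows the standard route: show each triple has $\Omega(n^t)$ absorbers, take a random subfamily, and link the absorbers into one path via a short connecting lemma. \textbf{Reservoir.} Pick a random $R\subseteq V(H)$ with $|R|=\gamma n$ inheriting a robust version of the connectivity below. \textbf{Almost-spanning path cover.} Cover $V(H)\setminus(R\cup V(P_{\mathrm{abs}}))$ by $O(1)$ vertex-disjoint $2$-paths up to $o(n)$ uncovered vertices. \textbf{Assembly.} Finally, connect those $O(1)$ paths and $P_{\mathrm{abs}}$ through $R$ into a single $2$-cycle, arrange the leftover (the unused part of $R$ together with the $o(n)$ uncovered vertices) to have size a multiple of $3$ and at most $\beta^2n$, and absorb it into $P_{\mathrm{abs}}$.

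The hard part will be the almost-spanning path cover together with the connecting lemma at the threshold $\tfrac{91}{216}$, which is exactly where the space-barrier budget is tight and where the generic bookkeeping of \cite{large} breaks down because $2k-2\ell=6$ is too small. The degree hypothesis does supply the essential local slack: for every pair $\{a,b\}$, any vertex set $S$ meeting every co-neighbour triple of $\{a,b\}$ satisfies $\binom{n-|S|}{3}\le(\tfrac{125}{216}-\eta')\binom n3$, hence $|S|>(\tfrac16+\eta'')n$ — precisely what $H_{5,2}$ lacks. But converting this into a near-spanning family of $2$-paths is delicate when short extensions and short connections each burn many vertices, so I would aim for a stability dichotomy: either $H$ carries a robust $(\beta,t)$-reachability structure on pairs of vertices — essentially one reachability class covering all but $o(n^2)$ pairs — which makes both the cover and the connecting/absorbing steps go through at $\tfrac{91}{216}+\eta$; or $H$ is $o(1)$-close (in edit distance) to $H_{5,2}$, in which case the surplus of $\eta\binom n3$ edges over $\delta_2(H_{5,2})$ can be used directly to build the part of the cover that the budget-deficient structure cannot, and then to close the cycle. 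Making this dichotomy sharp for $k=5$, $\ell=2$ — i.e.\ showing that at the exact value $\tfrac{91}{216}$ the only obstruction to connectivity-plus-coverage is the $H_{5,2}$-type deficit — is the main obstacle. By comparison I expect the absorbing-path lemma, the short connecting lemma, and the reservoir step to go through at this threshold with standard random/greedy arguments and only routine codegree estimates.
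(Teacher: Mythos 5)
Your lower bound is exactly the paper's: the space-barrier graph $H_{5,2}$ with $|A|=\lceil n/6\rceil-1$ gives $h^2_2(5)\ge 1-(5/6)^3=91/216$, and the deficit argument (each vertex of $A$ lies in at most two edges of a $2$-cycle, so a $2$-cycle needs $|A|\ge n/6$) is the same. Your general framework — absorbing path, reservoir, path cover, assembly — also matches the paper's architecture in Section 3.

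However, your treatment of the crucial steps diverges from the paper and leaves the hard part unresolved. The paper does not attempt a stability dichotomy for the path cover. Instead it proves the general Theorem~\ref{thmh}, which says $h_\ell^\ell(k)\le\max\{(1/2)^{(k-\ell)/\ell},\,t(k,\ell,\ell)\}$, where $t(k,\ell,\ell)$ is the $\ell$-degree threshold for an almost-perfect $Y_{k,2\ell}$-tiling. For $(k,\ell)=(5,2)$ this gives $h^2_2(5)\le\max\{(1/2)^{3/2},\,t(5,2,2)\}$, and since $(1/2)^{3/2}\approx 0.354<91/216$, the whole problem collapses to computing the \emph{tiling} threshold $t(5,2,2)$. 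That computation is the actual content of the appendix: by Lemma~\ref{ftot} it is enough to bound the \emph{fractional} $Y_{5,4}$-tiling threshold $f_2^{n/6}(Y_{5,4},n)$; by Lemma~\ref{tran} this is bounded by the density ($0$-degree) threshold $f_0^{n/6}(Y_{3,2},n-2)$ for $3$-graphs; and Theorem~\ref{thm1} (from the companion paper \cite{32}) gives the sharp density condition for a $Y_{3,2}$-tiling covering $4\alpha n$ vertices. Plugging $\alpha=\frac{n}{6(n-2)}$ into that theorem yields $t(5,2,2)\le 91/216$. The crucial advance over the earlier paper \cite{large} is precisely that Theorem~\ref{thm1} can now be applied at $\alpha=1/6$, not just $\alpha<1/7$. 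None of this tiling machinery appears in your sketch; you instead propose a stability dichotomy ("either robust reachability on almost all pairs, or close to $H_{5,2}$") which you flag as "the main obstacle" and do not carry out — so the step that actually pins down $91/216$ is missing.

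Two further gaps in the steps you call "routine." The connecting lemma is \emph{not} routine here: since $d=\ell=2$, the connecting lemma of \cite{Non-linear} (which requires $d>\ell$) does not apply, and the paper derives a new connecting lemma (Lemma~\ref{conlem}) from the Kruskal--Katona theorem, valid for $\delta_\ell(H)\ge((1/2)^{(k-\ell)/\ell}+\eta)\binom{n}{k-\ell}$. And the absorbing-path step is not "show each triple has $\Omega(n^t)$ absorbers": at $1/3+\eta$ degree one cannot do this directly; the paper instead uses the swapping-absorbing idea of \cite{Reiher2019Minimum} together with the lattice-based absorbing method — partitioning $V(H)$ into at most two $(\beta,2)$-closed classes via Lemma~\ref{part}, finding many absorbers only for the $(k-\ell)$-sets $\mathcal{S}$ whose index vectors are controlled (Claim~\ref{absorbers}), and then pre-loading the absorbing path with a buffer $R_1$ so that an arbitrary small set $X$ can be split, together with $R_1$, into members of $\mathcal{S}$. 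Your sketch does not account for any of this bookkeeping, which is exactly where the $1/3$ and $(1/2)^{(k-\ell)/\ell}$ constants enter.
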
 

Our proof in~\cite{large} fails for determining $h^{2}_{2}(5)$.
The key reason is that in~\cite{large} we could only prove Theorem~\ref{thm1} for $\alpha < 1/7$ (and one can see later in Section 3 that we need the case $\alpha=1/6$ in proving Theorem~\ref{52}).
Moreover, we also derive a connecting lemma (Lemma~\ref{conlem}) for $d=\ell$ from the Kruskal--Katona theorem -- in~\cite{large} we use a connecting lemma from~\cite{Non-linear} which works for $d>\ell$ only.
At last, we also need a better absorbing path lemma because the one we established in~\cite{large} only works for $d\ge k-\ell$ and $\ell < k/2$.

Given two $k$-graphs $F$ and $H$, an \emph{$ F $-tiling} in $ H $ is a subgraph of $H$ consisting of vertex-disjoint copies of $F$. 
The number of copies of $F$ is called the \emph{size} of the $F$-tiling.
When $F$ is a single edge, an $F$-tiling is known as a \emph{matching}. 
For $k>b\ge 0$, let $Y_{k,b}$ be the $k$-graph consisting of two edges that intersect in exactly $b$ vertices. 
For any $ 0<\varepsilon<\eta $, $k\ge3$, $1\leq \ell<k/2$ and $ 1\le d \le k-1 $, let $ t_{d}^{\ell}(k,n,\varepsilon) $ denote
the minimum $ t $ such that every $k$-graph $H$ on $n$ vertices with $\delta_d(H)\ge t$ contains a $Y_{k,2\ell}$-tiling covering all but at most $ \varepsilon n $ vertices of $ H $.  Let \[ t(k,d,\ell):=\limsup_{\varepsilon\to 0} \limsup_{n\to \infty} \frac{t_{d}^{\ell}(k,n,\varepsilon)}{\binom{n}{k-d}}.\] 
The aforementioned spare barrier construction indeed shows that
\begin{equation}
\label{eq:tlowerbd}
t(k,d,\ell) \ge 1-\left(1-\frac{1}{2(k-\ell)}\right)^{k-d}.
\end{equation}

Using the new connecting lemma and absorbing lemma mentioned above, we show the following two results for general Dirac thresholds for $\ell$-cycles with $ \ell <k/2$.
\begin{thm} \label{thmh2}
Suppose that $\eta>0$, $\ell< d \le k-1$  and  $1\le \ell <k/2$, then  there exists $ n_0 $ such that every $ k $-graph $H$ on $n\ge n_0 $  vertices with  $(k-\ell)\mid n$ and 
\[ 	\delta_{d}(H)\ge\left(\max\{t(k,d,\ell),1/3\}+\eta\right)\binom{n}{k-d} 	\]
contains a Hamilton $ \ell $-cycle,
that is, $ 	h_{d}^{\ell}(k)\le\max\{t(k,d,\ell),1/3\} $.
\end{thm}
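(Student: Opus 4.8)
The plan is to follow the by-now standard absorption framework for Hamilton $\ell$-cycles, but feeding in the three new ingredients advertised in the introduction: the tiling threshold $t(k,d,\ell)$ for $Y_{k,2\ell}$-tilings, the connecting lemma (Lemma~\ref{conlem}), and the improved absorbing path lemma. Fix $\eta>0$ and choose constants $1/n_0\ll\gamma\ll\eta$ with $(k-\ell)\mid n$ and $\delta_d(H)\ge(\max\{t(k,d,\ell),1/3\}+\eta)\binom{n}{k-d}$. First I would apply the absorbing path lemma to produce, inside $H$, a short $\ell$-path $P_{\mathrm{abs}}$ on at most $\gamma n$ vertices with the absorption property: for any vertex set $W$ of size at most $\gamma^2 n$ with $|W|$ divisible by $k-\ell$, there is an $\ell$-path on exactly $V(P_{\mathrm{abs}})\cup W$ with the same pair of ends as $P_{\mathrm{abs}}$. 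Since $\delta_d(H)\ge(1/3+\eta)\binom{n}{k-d}$, the usual supersaturation count of absorbers for each small gadget goes through, and a random greedy choice plus a deletion step yields $P_{\mathrm{abs}}$; this is where the new absorbing lemma (valid in the regime $\ell<d$, $1\le\ell<k/2$, not just $d\ge k-\ell$) is used.

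Next I would set $V' := V(H)\setminus V(P_{\mathrm{abs}})$ and, since removing $\gamma n$ vertices changes each $d$-degree by at most $o(\binom{n}{k-d})$, we still have $\delta_d(H[V'])\ge(\max\{t(k,d,\ell),1/3\}+\eta/2)\binom{n-|V(P_{\mathrm{abs}})|}{k-d}$. Now apply the definition of $t(k,d,\ell)$: because our degree is above $t(k,d,\ell)+\eta/2$ times the relevant binomial, $H[V']$ contains a $Y_{k,2\ell}$-tiling $\mathcal{T}$ covering all but at most $\gamma^2 n/2$ vertices of $V'$. Each copy of $Y_{k,2\ell}$ is two edges meeting in exactly $2\ell$ vertices; on $2(k-\ell)$ vertices this is precisely a short $\ell$-path with two edges, and its two ends are $\ell$-sets. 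So $\mathcal{T}$ together with $P_{\mathrm{abs}}$ is a collection of at most $|V'|/(2(k-\ell))+1$ vertex-disjoint $\ell$-paths covering all but a set $W_0$ of at most $\gamma^2 n/2$ vertices of $H$.

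Then I would chain these $\ell$-paths into a single cycle using the connecting lemma. Lemma~\ref{conlem} (the $d=\ell$ Kruskal--Katona version, or its $d>\ell$ analogue, as appropriate for the range $\ell<d$) guarantees that any two disjoint ordered $\ell$-sets $A,B$ lying in a large ``reservoir'' can be joined by an $\ell$-path of bounded length through the reservoir, provided $\delta_d$ is above the relevant constant --- here $1/3+\eta$ suffices, and one reserves a random set $R$ of size $\gamma n$ at the outset (disjoint from $P_{\mathrm{abs}}$) to host all connections, using each vertex of $R$ at most once and deleting used vertices after each step. Connecting the $O(n)$ paths consumes at most $O(1)$ reservoir vertices each, but there are linearly many paths, so one must instead connect only the absorbing path and the $\ell$-paths of the tiling in a single cyclic order, paying $O(k)$ reservoir vertices per link and checking that the total number of links is $|\mathcal{T}|+1=O(n)$ while $|R|=\gamma n$; choosing $\gamma$ large enough relative to the per-link cost (but still $\ll\eta$) makes this feasible, and the leftover unused reservoir vertices get absorbed at the end. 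After connecting, we obtain one spanning-up-to-$W$ $\ell$-cycle containing $P_{\mathrm{abs}}$, where $W$ is the union of $W_0$ and the unused part of $R$, with $|W|\le\gamma^2 n$ and $(k-\ell)\mid|W|$; absorbing $W$ into $P_{\mathrm{abs}}$ closes the argument and gives a Hamilton $\ell$-cycle, hence $h_d^\ell(k)\le\max\{t(k,d,\ell),1/3\}$.

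The main obstacle I expect is the bookkeeping in the connecting step: since the $Y_{k,2\ell}$-tiling produces linearly many short paths, each connection must be cheap in reservoir vertices and the connections must not interfere, which requires the connecting lemma to be robust (it must still work after a bounded number of previous connections have removed a small proportion of $R$) and requires a careful choice of the hierarchy $1/n_0\ll\gamma\ll\gamma_{\mathrm{link}}\ll\eta$; getting the new $d=\ell$ connecting lemma from Kruskal--Katona to have exactly this robustness, in the regime $1\le\ell<k/2<d$-range $\ell<d\le k-1$, is the delicate point, and is presumably why the paper isolates it as Lemma~\ref{conlem}.
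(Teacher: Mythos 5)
Your proposal follows the same high-level absorption scheme as the paper (absorbing path, reservoir, cover, connect, absorb), but there is a genuine gap in the cover-and-connect step that the paper's proof specifically avoids. You obtain the path cover directly from a near-perfect $Y_{k,2\ell}$-tiling, which yields $\Theta(n)$ vertex-disjoint $\ell$-paths of length two, and then propose to connect all of them through a reservoir $R$ of size $\gamma n$. But each application of the connecting lemma introduces new interior vertices: the path of length two from Lemma~\ref{conlem} has $2k-\ell$ vertices of which $2\ell$ are shared with the two endpoints, so at least $2k-3\ell\ge2$ fresh vertices per connection (and up to $8k^5$ if Lemma~\ref{con lem} is used). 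With roughly $n/(2(k-\ell))$ tiles to connect, the total fresh-vertex demand is at least $\bigl(1-\tfrac{\ell}{2(k-\ell)}\bigr)n$, a constant fraction of $n$ bounded away from zero, which cannot fit in any reservoir of size $\gamma n$ with $\gamma\ll\eta<1$. The phrase ``choosing $\gamma$ large enough relative to the per-link cost (but still $\ll\eta$) makes this feasible'' is where the argument breaks: there is no such $\gamma$.

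The paper circumvents exactly this by invoking a separate \emph{path cover lemma} (Lemma~\ref{patlem1}, from \cite{large}), which, under $\delta_d(H)\ge(t(k,d,\ell)+\mu)\binom{n}{k-d}$, produces a family of at most $D$ vertex-disjoint $\ell$-paths covering all but $4\varepsilon n$ vertices, where $D$ is a \emph{constant} depending only on $\varepsilon,\mu,k$. Internally that lemma does use the $Y_{k,2\ell}$-tiling, but it performs the additional (and nontrivial) work of concatenating the $\Theta(n)$ short tiles into $O(1)$ long $\ell$-paths before the connecting stage. With only $O(1)$ connections to make, each of cost at most $8k^5$, the reservoir of size $\alpha n$ (taken with the reservoir lemma, Lemma~\ref{res lem}, so that its restricted $d$-degree still exceeds the connecting threshold) is more than sufficient, and the rest of your outline (closing the cycle and absorbing the leftover) then goes through exactly as you describe. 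So the missing ingredient is precisely this reduction from a linear number of tiles to a bounded number of paths; without it, the connecting step cannot be executed.
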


\begin{thm} \label{thmh}
Suppose that $\eta>0$ and  $2\le \ell <k/2$, then  there exists $ n_0 $ such that every $ k $-graph $H$ on $n\ge n_0 $  vertices with  $(k-\ell)\mid n$ and 
\[ 	\delta_{\ell}(H)\ge\left(\max\left\{(1/2)^{\frac{k-\ell}{\ell}},t(k,\ell,\ell)\right\}+\eta\right)\binom{n}{k-\ell} 	\]
contains a Hamilton $ \ell $-cycle,
that is, $ 	h_{\ell}^{\ell}(k)\le\max\left\{(1/2)^{\frac{k-\ell}{\ell}},t(k,\ell,\ell)\right\} $.
\end{thm}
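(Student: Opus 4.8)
The plan is to follow the standard absorption method, adapted to the case $d=\ell$ where the new connecting lemma (Lemma~\ref{conlem}, derived from Kruskal--Katona) and the improved absorbing path lemma come into play. First I would fix the usual hierarchy of constants $1/n_0 \ll \gamma \ll \eta$ and set $c := \max\{(1/2)^{(k-\ell)/\ell}, t(k,\ell,\ell)\}$, so the hypothesis reads $\delta_\ell(H)\ge (c+\eta)\binom{n}{k-\ell}$. The skeleton has three ingredients: (i) an \emph{absorbing $\ell$-path} $P_{\mathrm{abs}}$ on a small linear number of vertices with the property that for every sufficiently small vertex set $W$ of the correct divisibility, $P_{\mathrm{abs}}$ can ``swallow'' $W$ and remain an $\ell$-path with the same ends; (ii) an almost-perfect $Y_{k,2\ell}$-tiling of the leftover, which exists precisely because $\delta_\ell(H) > (t(k,\ell,\ell)+\eta/2)\binom{n}{k-\ell}$ by the definition of $t(k,\ell,\ell)$; and (iii) the connecting lemma, which lets us string the two edges of each $Y_{k,2\ell}$ (each pair of $k$-edges sharing $2\ell$ vertices is itself a short $\ell$-path) together, and then close everything into a Hamilton $\ell$-cycle, absorbing the uncovered $\le \varepsilon n$ vertices at the very end.

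The order of operations would be: (1) Prove the connecting lemma for $d=\ell$: given two ordered $\ell$-sets (ends of partial $\ell$-paths) drawn from a large ``reservoir'' set $R$, show there are many short $\ell$-paths in $H[R\cup\{\text{ends}\}]$ joining them. Here I would count, for a generic $\ell$-set $S$, the number of $k$-edges through it, and use Kruskal--Katona to convert the edge density $\delta_\ell/\binom{n}{k-\ell}$ into a lower bound on the $(k-\ell)$-shadow or the codegree of pairs of $\ell$-sets; the bound $(1/2)^{(k-\ell)/\ell}$ is exactly the threshold at which a ``clique-type'' construction on a $(1/2)$-fraction of the vertices fails to obstruct connection, which is why it appears in the $\max$. (2) Build $P_{\mathrm{abs}}$: for each small $W$, find many ``absorbers'' — short $\ell$-paths that can be inserted to absorb a fixed small chunk of $W$ — then take a random subfamily and greedily chain them with the connecting lemma into one path, as in R\"odl--Ruci\'nski--Szemer\'edi. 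This is where the \emph{better} absorbing path lemma is needed: the version from~\cite{large} assumed $d\ge k-\ell$, but for $d=\ell < k/2$ we have $\ell < k-\ell$, so a fresh construction of the absorbing gadget is required. (3) Set aside a reservoir $R$, remove $P_{\mathrm{abs}}$, apply the almost-perfect $Y_{k,2\ell}$-tiling to $V\setminus(V(P_{\mathrm{abs}})\cup R)$, turn each $Y_{k,2\ell}$ into a length-$2$ $\ell$-path, connect all these $O(\varepsilon n)$-many short paths plus $P_{\mathrm{abs}}$ into one long $\ell$-path using vertices of $R$, then use $P_{\mathrm{abs}}$ to absorb the at most $\varepsilon n$ leftover vertices (those not covered by the tiling, plus leftover reservoir vertices), and finally close the ends with one more application of the connecting lemma. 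Divisibility $(k-\ell)\mid n$ guarantees the final object is a genuine Hamilton $\ell$-cycle.

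I expect the main obstacle to be step~(1)/(2) together — specifically, proving that the connecting lemma holds all the way down to $d=\ell$ with the stated threshold, and building an absorbing gadget in the regime $\ell < k-\ell$ where each vertex of an $\ell$-cycle lies in at most two edges but consecutive edges overlap in only $\ell < k/2$ vertices, so there is relatively little ``room'' in which to hide an absorber. Concretely, the absorber for a vertex (or small vertex set) $v$ must be a short $\ell$-path whose internal structure has a flexible slot that can be re-routed through $v$; designing such a gadget and verifying, via the $\ell$-degree condition and Kruskal--Katona, that $H$ contains $\Omega(n^{|V(\text{gadget})|-O(1)})$ copies of it for every candidate $v$, is the technical heart. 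Once connecting and absorbing are in place, the assembly in step~(3) is routine and mirrors the proof of Theorem~\ref{thmh2}, with $\max\{(1/2)^{(k-\ell)/\ell},t(k,\ell,\ell)\}$ replacing $\max\{t(k,d,\ell),1/3\}$ because the $1/3$ there came from an almost-spanning \emph{path} cover argument that, for $d=\ell$, is superseded by the stronger $Y_{k,2\ell}$-tiling threshold and the Kruskal--Katona-based connection bound.
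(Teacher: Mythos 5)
Your overall skeleton (absorbing path + reservoir + cover + connect + absorb, with the new Kruskal--Katona-based connecting lemma for $d=\ell$) matches the paper, and you correctly identify that the connecting lemma and the absorbing gadget are the two places where new work is needed for $d=\ell<k/2$. However, step~(3) of your plan has a genuine gap.

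You propose to take an almost-perfect $Y_{k,2\ell}$-tiling of $V\setminus(V(P_{\mathrm{abs}})\cup R)$, view each copy of $Y_{k,2\ell}$ as a length-two $\ell$-path, and then connect all of them together via the reservoir. But such a tiling contains $\Theta(n)$ copies of $Y_{k,2\ell}$, not ``$O(\varepsilon n)$-many'' as you write (it \emph{misses} at most $\varepsilon n$ vertices; the number of copies is about $n/(2k-2\ell)$). Each connection via the connecting lemma introduces at least $k-2\ell\ge 1$ fresh reservoir vertices, so the total cost of connecting is $\Theta(n)$. If the reservoir $R$ is chosen with $|R|=\alpha n$ for small $\alpha$, you run out of reservoir long before finishing; if you enlarge $R$ to size $\Theta(n)$ with a non-tiny constant, then after connecting there is a leftover of unpredictable size, possibly $\Theta(n)$, and the degree condition inside the shrinking reservoir degrades, while the absorbing path can only handle sets of size $\le \theta n$ for tiny $\theta$. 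So the assembly as you describe it does not close.

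The paper avoids this by inserting an intermediate step: the Path Cover Lemma (Lemma~\ref{patlem1}, from the same $\delta_d$ threshold $t(k,d,\ell)+\mu$) produces a family of at most $D$ vertex-disjoint long $\ell$-paths, where $D$ is a \emph{constant}, covering all but $4\varepsilon n$ vertices. The tiling threshold enters only inside the proof of that lemma (via weak regularity, turning a fractional $Y_{k,2\ell}$-tiling of the reduced hypergraph into long $\ell$-paths in $H$), and what you actually connect is these $O(1)$ long paths plus $P_{\mathrm{abs}}$. Then the number of connections is $O(1)$, the reservoir is spent only on $O(1)$ short connecting paths, the leftover (uncovered vertices plus unused reservoir) has size at most $(\alpha+\varepsilon)n\le\theta n$, and the absorption step goes through. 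With that correction, the remainder of your plan — and in particular the observation that Lemma~\ref{conlem} replaces Lemma~\ref{con lem} when $d=\ell$, which is exactly the change the paper makes between Theorem~\ref{thmh2} and Theorem~\ref{thmh} — is on the right track. One further point worth flagging for step~(2): the paper's absorbing lemma also has to cope with the fact that not every $(k-\ell)$-set is directly absorbable; this is handled by the lattice/partition argument (Lemma~\ref{part}) plus a reserved correction set $R_1$, which your sketch does not mention.
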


We do need the minimum $d$-degree condition to be at least $(\frac13 + o(1))\binom{n}{k-d}$ in both theorems above.
The reason that we do not see the constant $1/3$ in Theorem~\ref{thmh} is that $t(k,\ell,\ell) > 1 - 1/\sqrt{e} > 0.39$ by~\eqref{eq:tlowerbd}.
Similarly, when $d>\ell$ and $t(k,d,\ell) > 1/3$, we know that $ 	h_{d}^{\ell}(k) = t(k,d,\ell)$, that is, the value of $h_{d}^{\ell}(k)$ is determined solely by the tiling threshold.
\begin{cor} \label{thmh3}
If  $0.82(k-\ell)\le k-d< k-\ell$, then $ 	h_{d}^{\ell}(k) = t(k,d,\ell)$.
\end{cor}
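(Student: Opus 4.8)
The plan is to squeeze $h_{d}^{\ell}(k)$ between the space-barrier lower bound~\eqref{eq:lowerbd} and the upper bound of Theorem~\ref{thmh2}. The hypothesis $0.82(k-\ell)\le k-d$ is exactly what makes the constant $1/3$ in $\max\{t(k,d,\ell),1/3\}$ irrelevant and, together with $k-d<k-\ell$, forces the almost-perfect $Y_{k,2\ell}$-tiling threshold down to the space-barrier density. First I would note that $k-d<k-\ell$ gives $d>\ell$ and $k-d\ge 0.82(k-\ell)\ge1$ gives $d\le k-1$, so with the standing hypothesis $1\le\ell<k/2$ we are in the scope of Theorem~\ref{thmh2}: $h_{d}^{\ell}(k)\le\max\{t(k,d,\ell),1/3\}$.

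The next step is the elementary estimate that pins down the number $0.82$. Writing $m:=k-\ell$, the function $x\mapsto(1-\tfrac1{2m})^{x}$ is decreasing, so from $k-d\ge0.82m$ and $\ln(1-x)\le-x$,
\[
\Big(1-\tfrac1{2m}\Big)^{k-d}\ \le\ \Big(1-\tfrac1{2m}\Big)^{0.82m}\ =\ \exp\!\Big(0.82m\,\ln\!\big(1-\tfrac1{2m}\big)\Big)\ \le\ e^{-0.41}\ <\ \tfrac23,
\]
the last inequality because $e^{0.41}>1+0.41+\tfrac{0.41^{2}}{2}+\tfrac{0.41^{3}}{6}>\tfrac32$. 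Hence, by~\eqref{eq:tlowerbd}, $t(k,d,\ell)\ge 1-(1-\tfrac1{2(k-\ell)})^{k-d}>\tfrac13$, so $\max\{t(k,d,\ell),1/3\}=t(k,d,\ell)$, and therefore $h_{d}^{\ell}(k)\le t(k,d,\ell)$.

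For the reverse inequality the key input is the matching upper bound $t(k,d,\ell)\le1-(1-\tfrac1{2(k-\ell)})^{k-d}$ in the regime $k-d<k-\ell$: once the codegree parameter $k-d$ is this close to $k-\ell$, a minimum $d$-degree a little above the space-barrier density already forces a $Y_{k,2\ell}$-tiling covering all but $o(n)$ vertices. Granting this, \eqref{eq:tlowerbd} upgrades it to $t(k,d,\ell)= 1-(1-\tfrac1{2(k-\ell)})^{k-d}$, and then \eqref{eq:lowerbd} yields $h_{d}^{\ell}(k)\ge 1-(1-\tfrac1{2(k-\ell)})^{k-d}= t(k,d,\ell)$. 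Combined with the previous step, $h_{d}^{\ell}(k)= t(k,d,\ell)$.

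The substantive obstacle is precisely that last ingredient — showing that the space barrier is extremal for almost-perfect $Y_{k,2\ell}$-tilings when $0.82(k-\ell)\le k-d<k-\ell$ — which is where a dedicated $Y_{k,2\ell}$-almost-tiling argument for $d>\ell$ must be brought in; everything else is the short computation above (the constant being a rational just above $2\ln(3/2)\approx0.8109$, so that $e^{-(k-d)/(2(k-\ell))}<2/3$) plus routine bookkeeping with Theorem~\ref{thmh2} and~\eqref{eq:lowerbd}, \eqref{eq:tlowerbd}.
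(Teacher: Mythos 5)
Your upper-bound argument matches the paper's: use \eqref{eq:tlowerbd} together with $(1-\tfrac1{2(k-\ell)})^{k-d}\le e^{-(k-d)/(2(k-\ell))}\le e^{-0.41}<2/3$ to get $t(k,d,\ell)>1/3$, then apply Theorem~\ref{thmh2} to get $h_d^\ell(k)\le t(k,d,\ell)$; your numerology, including the observation that $0.82$ sits just above $2\ln(3/2)\approx0.8109$, is correct. Where you depart from the paper is in how openly you treat the reverse inequality. The paper's entire proof is the displayed chain showing $t(k,d,\ell)>1/3$ followed by the sentence ``So $h_d^\ell(k)=t(k,d,\ell)$ by Theorem~\ref{thmh2}'' --- but Theorem~\ref{thmh2} is a one-sided statement, so this delivers only $h_d^\ell(k)\le t(k,d,\ell)$; the matching lower bound $h_d^\ell(k)\ge t(k,d,\ell)$ is not argued in the proof of the corollary (nor elsewhere in the paper for general $k,d,\ell$). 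Your proposed route --- establish $t(k,d,\ell)=1-(1-\tfrac1{2(k-\ell)})^{k-d}$ in this range and then invoke \eqref{eq:lowerbd} --- is the natural way to close this, and you are right to flag that the tiling upper bound is an unproved ``key input''; in the paper it is only verified for $(k,d,\ell)=(5,2,2)$ in the appendix. (Strictly, equality needs only the weaker $h_d^\ell(k)\ge t(k,d,\ell)$, but since the only lower bound on $h_d^\ell(k)$ the paper records is \eqref{eq:lowerbd}, one does in practice need the tiling threshold to equal the space barrier.) So your proposal is not less complete than the paper's two-line proof --- it just makes explicit the assertion that the paper leaves tacit.
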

\begin{proof}We have
\[	t(k,d,\ell) \ge 1-\left(1-\frac{1}{2(k-\ell)}\right)^{k-d}>1-e^{-\frac{k-d}{2(k-\ell)}}\ge1-e^{-0.41}>1/3,\] 
where we used  the inequality $ (1-1/n)^{n}<1/e $ for integer $ n>0 $.
So $ 	h_{d}^{\ell}(k) = t(k,d,\ell)$ by Theorem~\ref{thmh2}.
\end{proof}

\subsection{New proof ideas}
Now we briefly talk about our proof ideas.
Theorem~\ref{thmh2} and Theorem~\ref{thmh} are proved by using the absorbing method, popularized by R\"odl, Ruci\'nski, and Szemer\'edi in \cite{VOJTECH2006A}. 
The proof is divided into the following lemmas: the connecting lemma, the absorbing path lemma, the path cover lemma and the reservoir lemma.
Roughly speaking, the absorbing path lemma reduces the task of finding a Hamilton $ \ell $-cycle to the much easier problem of finding an $ \ell $-cycle  covering the majority of vertices. 
Furthermore, we compute the value of $ t(5,2,2) $ (see Theorem~\ref{yth}), which together with Theorem~\ref{thmh} implies Theorem~\ref{52}.

To prove our absorbing lemma Lemma~\ref{abslem}, we combine the swapping-absorbing idea of Reiher, R\"odl, Ruci\'nski, Schacht and Szemer\'edi \cite{Reiher2019Minimum} and the lattice-based absorbing method of the first author~\cite{han2017decision}.
Roughly speaking, by the swapping-absorbing idea, one can build the absorbers in two steps. In the first step, we show that there are many ``end absorbers'' from each of which we can ``free'' a $(k-\ell)$-set of vertices.
In the second step, it is shown that every $(k-\ell)$-set can be ``swapped'' with the free vertex set in the end absorbers, namely, one can find two short $\ell$-paths that include either of the sets as interior vertices (e.g., in the graph case, the pair $(a,b)$ is a swapper for $u$ and $v$ if both $aub$ and $avb$ form paths of length two). 
It is easy to see that one can ``concatenate'' the swappers to form longer swapper chains, and we use the reachability arguments and the lattice-based absorbing method to control the swappings. 

The rest of this paper is organized as follows. 
In Section 2 we give some preparatory results.
We use the absorbing method to prove Theorem~\ref{thmh2} and Theorem~\ref{thmh}  in Section 3.
Finally, we give proofs of  Theorem~\ref{52} in Appendix.

\section{Preliminaries}
One important ingredient of our swapping-absorbing method is the following notion of reachability introduced by Lo and Markstr\"om \cite{MR3338027}.
Given a constant $ \beta>0 $, an integer $ i \ge 1 $ and a $ k $-graph $ H $ on $ n $ vertices, we say that two vertices $ u, v $ in $ H $ are $ (\beta, i) $-reachable  if there are at least $ \beta n^{(2k-\ell)i-1} $ $ ((2k-\ell)i-1) $-sets $ T $ such that there exist vertex-disjoint $ \ell $-paths $ P_1,\dots,P_i $ of length two with $ V(P_1\cup\dots\cup P_i )=T\cup\{u\} $, and  vertex-disjoint $ \ell $-paths $ P_1',\dots,P_i' $ of length two with  $ V(P_1'\cup\dots\cup P_i' )=T\cup\{v\} $, where $ P_j $ and $ P_j' $ have the same ends for all $ j\in[i] $.
Moreover, we call $ T $ a reachable set for $ \{u, v\}$. 
Given a vertex set $ U\subseteq V(H) $, $ U  $ is said to be $ (\beta, i) $-closed if every two vertices in $ U $ are $ (\beta, i) $-reachable in $ H $. 

The following simple results will be useful.
\begin{fact}\label{mind}
Let $1\le d'\le d <k$ and $H$ be a $k$-graph on $n$ vertices. 
If $\delta_{d}(H) \ge x\binom{n-d}{k-d}$ for some $ 0\le x \le 1 $, then $\delta_{d'}(H) \ge x\binom{n-d'}{k-d'}$.
\end{fact}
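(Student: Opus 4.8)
The plan is to deduce the inequality for $\delta_{d'}$ from the one for $\delta_d$ by a standard averaging/double-counting argument, and it clearly suffices to treat an arbitrary fixed $d'$-set. So first I would fix a $d'$-element set $S'\subseteq V(H)$ with the goal of showing $\deg_H(S')\ge x\binom{n-d'}{k-d'}$. To that end I would count, in two different ways, the number $N$ of pairs $(S,T)$ such that $S$ is a $d$-element set with $S'\subseteq S$, $T$ is a $(k-d)$-element subset of $V(H)\setminus S$, and $S\cup T$ is an edge of $H$.

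Counting over the choice of $S$ first: there are $\binom{n-d'}{d-d'}$ choices of $S$ (pick the $d-d'$ elements of $S\setminus S'$ among the remaining $n-d'$ vertices), and for each of them the number of admissible $T$ is exactly $\deg_H(S)\ge x\binom{n-d}{k-d}$ by hypothesis; hence $N\ge \binom{n-d'}{d-d'}\,x\binom{n-d}{k-d}$. Counting over the choice of edge first: an edge $e$ of $H$ with $S'\subseteq e$ gives rise to exactly $\binom{k-d'}{d-d'}$ pairs (choose the $d-d'$ elements of $S\setminus S'$ among the $k-d'$ elements of $e\setminus S'$, then set $T=e\setminus S$), while edges not containing $S'$ contribute nothing; hence $N=\deg_H(S')\binom{k-d'}{d-d'}$. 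Combining the two counts gives
\[
\deg_H(S')\ \ge\ x\,\frac{\binom{n-d'}{d-d'}\binom{n-d}{k-d}}{\binom{k-d'}{d-d'}}.
\]

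The final step is to simplify the right-hand side, and the only computation involved is the binomial identity $\binom{n-d'}{d-d'}\binom{n-d}{k-d}=\binom{k-d'}{d-d'}\binom{n-d'}{k-d'}$, which is immediate upon expanding all binomials into factorials (or, bijectively, both sides count the ways to choose a $(k-d')$-subset of an $(n-d')$-set together with a distinguished $(d-d')$-subset of it). With this identity the bound becomes $\deg_H(S')\ge x\binom{n-d'}{k-d'}$, and since $S'$ was arbitrary we conclude $\delta_{d'}(H)\ge x\binom{n-d'}{k-d'}$. I do not anticipate any genuine obstacle here: the argument is a single averaging step and the binomial identity is routine; if one prefers to avoid even that identity, the statement also follows by induction on $d-d'$, the inductive step being the special case $d'=d-1$, where $\binom{k-d'}{d-d'}=k-d+1$.
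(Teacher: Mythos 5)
Your proof is correct and is essentially the same double-counting argument as the paper's, which is stated in one line: $\delta_{d'}(H) \ge \binom{n-d'}{d-d'}\,x\binom{n-d}{k-d}/\binom{k-d'}{d-d'}\ge x\binom{n-d'}{k-d'}$. You have simply written out the counting in full and (correctly) noted that the final binomial relation is in fact an identity.
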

\begin{proof}
Since  $\delta_{d}(H) \ge x\binom{n-d}{k-d}$, we get $\delta_{d'}(H) \ge \binom{n-d'}{d-d'} x\binom{n-d}{k-d}/ \binom{k-d'}{d-d'}\ge x\binom{n-d'}{k-d'} $.
\end{proof}                                     
We use by now a common hierarchical notation, writing $x\ll y$ to mean that there exists a function $f$, whenever $x\leq f(y)$, the subsequent statement holds. 
While multiple constants appear in a hierarchy, they are chosen from right to left.
\begin{prop}\label{c} 
Let $ q\ge2, $ $1/n\ll \beta\ll \eta$, $1\le\ell, d <k$ and $H$ be a $k$-graph on $n$ vertices with $\delta_{d}(H) \ge (1/q+\eta)\binom{n}{k-d}$.
Then every $ q $ vertices of $ V(H) $ contains two vertices which are $ (\beta, 1) $-reachable in $ H $.
\end{prop}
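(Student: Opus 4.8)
The plan is to count, for a fixed set $Q$ of $q$ vertices, the number of "bad" $\ell$-path configurations of length two witnessing reachability, and show by a pigeonhole/double-counting argument that some pair inside $Q$ must be $(\beta,1)$-reachable. Recall that $u,v$ are $(\beta,1)$-reachable if there are at least $\beta n^{2k-\ell-1}$ sets $T$ of size $2k-\ell-1$ such that $T\cup\{u\}$ spans an $\ell$-path of length two and $T\cup\{v\}$ spans an $\ell$-path of length two with the same ends. The key structural observation is that an $\ell$-path of length two on $2k-\ell$ vertices is just the graph $Y_{k,2\ell}$: two edges $e_1,e_2$ with $|e_1\cap e_2|=\ell$, and its two "ends" are the $\ell$-subsets $e_1\setminus e_2$ and $e_2\setminus e_1$ (or rather the designated end $\ell$-sets of the path). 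So a reachable set $T$ for $\{u,v\}$ amounts to a copy of $Y_{k,2\ell}$ containing $u$ in a non-end position together with a copy containing $v$ in the matching non-end position, sharing the same end $\ell$-sets and the same $\ell$ "middle" vertices.

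First I would fix any vertex $w\in Q$ and lower-bound the number of $\ell$-paths $P$ of length two that pass through $w$. Using $\delta_d(H)\ge (1/q+\eta)\binom{n}{k-d}$ and Fact~\ref{mind} to pass down to $\delta_\ell$ if needed, one builds such a path greedily: choose the $\ell$ middle vertices (one of which can be forced to be $w$, or $w$ can be placed as one of the other $k-\ell$ free vertices), then extend to the two edges using the minimum-degree condition, so that the number of length-two $\ell$-paths through $w$ is at least $c\,n^{2k-\ell-1}$ for some constant $c=c(q,\eta)>0$. Now for each such path $P$ through $w$, let $T_P=V(P)\setminus\{w\}$; this is a candidate reachable-set "skeleton." The heart of the argument is the following claim: because $\delta_d(H)$ is essentially $(1/q+\eta)\binom{n}{k-d}$, for a positive fraction of the $(2k-\ell-1)$-sets $T$ of this skeleton type, the number of vertices $x$ such that $T\cup\{x\}$ spans an $\ell$-path of length two with the prescribed ends and middle is at least $n/q$ (roughly, "most coordinates can be filled in $q$ ways, so by averaging some coordinate is filled $\ge n/q$ ways"). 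More precisely, fixing the two end $\ell$-sets $S_1,S_2$ and the middle $\ell$-set $M$, the set of $x$ completing $S_1\cup M\cup\{x\}$ and $M\cup S_2\cup\{x\}$ — wait, that overcounts $x$ — rather, fixing $S_1,M,S_2$ and all but one of the $k-\ell$ "plugging" vertices on one side, the number of valid choices for the last vertex is controlled by two degree conditions of the form $\deg(\cdot)\ge(1/q+\eta)\binom{n}{k-d}$, and after intersecting we retain a $(1/q+\eta-(1-1/q))$-type bound — no, that is negative, so instead one argues: the bad $x$ (those failing one of the two edge conditions) number at most $2(1-1/q-\eta)n < 2(1-1/q)n$... this is too many. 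So the correct route is to fix $S_1,M$, and ALL of side-two, so that we only need one edge condition on side one: then the number of $x$ with $S_1\cup M\cup\{x,\dots\}$ forming the missing edge is $\ge(1/q+\eta)n$ for each fixed completion, and two such $x,x'$ among any $q+1$ of these give a reachable pair — but we need the pair to be in $Q$, and there is no reason the $\ge(1/q+\eta)n$ good vertices hit $Q$.

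The resolution, and the step I expect to be the main obstacle, is to run the double count with the roles reversed: instead of fixing a skeleton and counting completing vertices, fix the $q$ vertices of $Q$ and count pairs $(u,v)\in\binom{Q}{2}$ together with a common reachable set. For each ordered pair $(u,v)$ from $Q$ let $R(u,v)$ be the number of reachable sets; we want to show $\sum_{\{u,v\}\in\binom{Q}{2}}R(u,v) \ge \binom{q}{2}\beta n^{2k-\ell-1}$ would suffice only if the sum is concentrated, which it need not be — so instead I show that for EACH $w\in Q$, the length-two $\ell$-paths through $w$ number $\ge c n^{2k-\ell-1}$, hence there are $\ge qc n^{2k-\ell-1}$ (path, distinguished vertex-of-$Q$) pairs; now group these paths by their vertex set $W$ of size $2k-\ell$: a given $W$ is counted once for each $w\in Q\cap W$ that can serve as a non-end vertex in some length-two $\ell$-path on $W$, but $|Q\cap W|\le q$ trivially and more importantly $|W|=2k-\ell$ is bounded, so the number of distinct such $W$ is still $\ge (qc/q)n^{2k-\ell-1} = c' n^{2k-\ell}/n$... this still does not force a pair inside $Q$. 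The genuinely correct argument — and the one I would commit to — uses Proposition's hypothesis directly via the "link" reformulation: define an auxiliary relation and apply the supersaturated Kruskal–Katona / simple averaging so that among the $q$ vertices of $Q$, the link hypergraphs $N(v)$ (each of density $\ge 1/q+\eta$ in $\binom{n}{k-d}$) must have two whose pairwise intersection has density $>0$, and then convert a dense common neighbourhood into the required $\beta n^{2k-\ell-1}$ reachable sets by the greedy path-extension above; the main obstacle is precisely the bookkeeping that turns "$N(u)\cap N(v)$ is dense" into "many length-two $\ell$-path pairs with common ends through $u$ and through $v$," which requires carefully designating which $\ell$-set plays the role of the shared ends and which vertices are plugged, then invoking $\delta_d$ twice (once on each side) with room to spare because $1/q+\eta+1/q+\eta-1 \cdot$... no: because the two relevant edges on the two sides are INDEPENDENT given the common middle $\ell$-set, so no subtraction is needed and each contributes a factor $(1/q+\eta)$ independently, yielding the claimed $\beta = \beta(q,\eta)>0$.

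I would present the final proof in the clean averaging form: since $\sum_{v\in Q}|N(v)| \ge q(1/q+\eta)\binom{n}{k-d} = (1+q\eta)\binom{n}{k-d} > \binom{n}{k-d}$, there exist $u\ne v$ in $Q$ with $|N(u)\cap N(v)| > 0$; fix a $(k-d)$-set $T_0\in N(u)\cap N(v)$. If $d\le\ell$ this already gives a shared $\ell$-sized structure around which both $u$ and $v$ extend to length-two $\ell$-paths by iterating the minimum-degree bound (Fact~\ref{mind} gives $\delta_\ell(H)$ still of the same density), and counting the completions in the $\Theta(n)$ remaining coordinates yields $\ge\beta n^{2k-\ell-1}$ common reachable sets; the case $d>\ell$ is analogous, building the two end edges and the middle $\ell$-set around $T_0$. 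Hence $u,v$ are $(\beta,1)$-reachable, completing the proof.
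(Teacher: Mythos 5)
Your opening averaging idea is the same as the paper's, but your ``final clean form'' has two genuine gaps, the second of which is fatal as written.

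First, the bookkeeping of what $N(v)$ is for a single vertex $v$ is off: for $v$ a vertex, $N(v)\subseteq\binom{V(H)}{k-1}$, and one must first pass from $\delta_d$ to $\delta_1$ via Fact~\ref{mind} to get $\delta_1(H)\ge(1/q+\eta/2)\binom{n}{k-1}$, after which the relevant density is in $\binom{n}{k-1}$, not $\binom{n}{k-d}$. More importantly, your averaging step only claims $|N(u)\cap N(v)|>0$ for some pair $u,v\in Q$ and then fixes a single set $T_0\in N(u)\cap N(v)$. A single common neighbour is far too little. Any $(2k-\ell-1)$-set $T\supseteq T_0$ with $|T_0|=k-1$ has at most $\binom{n-k+1}{k-\ell}=O(n^{k-\ell})$ choices, which is $o(n^{2k-\ell-1})$ for every $k\ge 2$; so "counting the completions in the $\Theta(n)$ remaining coordinates" cannot yield $\ge\beta n^{2k-\ell-1}$ reachable sets from a fixed $T_0$. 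What you actually get from the averaging step (and what the paper uses) is that $\sum_{i<j}|N(v_i)\cap N(v_j)|\ge\sum_i|N(v_i)|-\binom{n}{k-1}\ge q\eta/2\cdot\binom{n}{k-1}$, hence some pair $v_i,v_j$ has $|N(v_i)\cap N(v_j)|\ge\varepsilon n^{k-1}$, i.e.\ the intersection is a $(k-1)$-graph of positive density, not merely nonempty.

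Second, you do not supply the mechanism that converts a dense intersection into $\Omega(n^{2k-\ell-1})$ reachable sets, and the ``iterate the minimum-degree bound'' heuristic you propose does not apply: $N(v_i)\cap N(v_j)$ inherits density but not a minimum-degree condition, so there is no greedy extension available there. The paper's resolution is structural: let $P^*$ be an $\ell$-path of length two and let $P^{**}$ be the link in $P^*$ of a vertex of degree two; then $P^{**}$ is a $(k-1)$-partite $(k-1)$-graph on $2k-\ell-1$ vertices, so the Erd\H{o}s supersaturation theorem applied to the dense $(k-1)$-graph $N(v_i)\cap N(v_j)$ produces $\Omega(n^{2k-\ell-1})$ copies of $P^{**}$. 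Each such copy $T$ satisfies that both $T\cup\{v_i\}$ and $T\cup\{v_j\}$ span copies of $P^*$ with $v_i$ (resp.\ $v_j$) in the degree-two position and, crucially, with the same ends, since the ends lie entirely inside $T$. This is the step your proposal is missing; without it one cannot get from ``a dense common link'' to ``many reachable $(2k-\ell-1)$-sets.''
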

\begin{proof}
Take $ \beta\ll \varepsilon\ll \eta $.
By  $\delta_{d}(H) \ge (1/q+\eta)\binom{n}{k-d}$ and Fact~\ref{mind}, we get $\delta_{1}(H)\ge(1/q+\eta/2)\binom{n}{k-1} $.
For any $ q $-tuple $ v_1,v_2,\dots,v_q $ of $ V(H) $, we have $ \sum_{i=1}^{q}|N_{H}(v_i)|\ge(1+q\eta/2)\binom{n}{k-1} $.  
By the pigeonhole principle, there exist $ v_i,v_j $ such that $ |N_{H}(v_i)\cap N_{H}(v_j)|\ge \varepsilon n^{k-1} $.
Let $ P^{*} $ be a $ k $-graph which is an $ \ell $-path of length two. 
Then the link $ (k-1) $-graph $ P^{**} $ of a vertex of degree two in $ P^{*} $ is $ (k-1) $-partite. 
By the supersaturation result (see \cite{Onextremal}), we can find $ (2k-\ell-1)!\beta n^{2k-\ell-1} $ copies of $ P^{**} $ in $ N_{H}(v_i)\cap N_{H}(v_j) $. 
Given any such copy of $ P^{**} $ whose vertex set is denoted by $ T $,  we get that both $ T\cup \{v_i\} $ and $ T\cup \{v_j\} $ form a copy of $ P^{*} $. 
Overall there are at least $ \beta n^{2k-\ell-1} $ choices of $ (2k-\ell-1) $-sets for $ T $.
So $  v_i$ and  $v_j $ are $ (\beta, 1) $-reachable in $ H $.
\end{proof}

\section{Hamilton $\ell$-cycles}
In this section, we prove Theorem~\ref{thmh2} and Theorem~\ref{thmh}
\subsection{Connecting Lemmas}
We first present two versions of connecting lemma, both of which state that in any sufficiently large $ k $-graph with large minimum $ d $-degree, we can connect  any two disjoint ordered  $ \ell $-sets of vertices by a short $ \ell $-path.
When $d>\ell$, we use the following connecting lemma from \cite{Non-linear}. 
The case $ d=k-1 $ was proved earlier in~\cite{MR2652102}. 
\begin{lemma}[Connecting lemma, \cite{Non-linear}, Lemma 4.1]
\label{con lem}
Suppose that $k\ge3$ and $1\leq \ell<d \leq k-1$ such that $(k-\ell)\nmid k$, and that $1/n \ll \beta\ll \mu,1/k$.
Let $H$ be a $k$-graph on $n$ vertices satisfying $\delta_d(H)\ge \mu \binom{n}{k-d}$. Suppose  $S$ and $T$ are two disjoint ordered $\ell$-sets of $V(H)$, then there exists an $\ell$-path $P$ in $H$ with  $S$ and $T$ as ends such that $ P $ contains at most $ 8k^{5} $ vertices.	
\end{lemma}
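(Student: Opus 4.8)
The plan is the standard two‑stage \emph{reachability} argument for connecting lemmas, which here exploits $d>\ell$ as follows: off any ordered $\ell$-set $E$ one can ``grow'' a new edge of an $\ell$-path by first fixing a set $D$ of $d-\ell$ hitherto unused vertices and then invoking $\delta_d(H)\ge\mu\binom{n}{k-d}$ to obtain $\ge\mu\binom{n}{k-d}$ completions $F$ with $E\cup D\cup F$ an edge, the updated far end being the last $\ell$ vertices of the ordering $E,D,F$ (such extensions may equivalently be produced by supersaturation inside the relevant link hypergraph, as in the proof of Proposition~\ref{c}). The first stage is to fix $S$ and show there is a constant $t=t(k)$, bounded by a small polynomial in $k$, and $c=c(\mu,k)>0$ such that the family $\mathcal R_S$ of ordered $\ell$-sets occurring as the far end of some $\ell$-path of length $\le t$ starting at $S$ has $|\mathcal R_S|\ge c\binom n\ell$; iterating the one‑edge extension and using a defect form of the counting — the number of length‑one extensions ending at a prescribed oriented far end is a polynomial factor smaller than the total number of length‑one extensions — converts ``many witnessing paths'' into ``many distinct far ends.'' One then bootstraps from a positive fraction to almost all of $\binom{V}{\ell}$: from any $\Omega(n^\ell)$-sized family one reaches all but $o(n^\ell)$ of the $\ell$-sets in one further step, since an $\ell$-set $Y$ can fail only when $Y$ together with $d-\ell$ fresh vertices has an atypically small link inside the union of the current family, which $\delta_d(H)$ forbids for all but $o(n^\ell)$ choices of $Y$; the hypothesis $(k-\ell)\nmid k$ enters here to exclude a divisibility/periodicity obstruction that would otherwise confine $\mathcal R_S$ to a proper residue class of $\ell$-sets. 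At each of the $\le t$ steps one may avoid any prescribed set of $O(k^5)$ vertices at an additive cost of $o(n^\ell)$.

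In the second stage, applying the above to both $S$ and $T$ gives families $\mathcal R_S,\mathcal R_T$ each omitting only $o(n^\ell)$ ordered $\ell$-sets, with witnessing $\ell$-paths on at most $\ell+t(k-\ell)$ vertices, so there is an ordered $\ell$-set $Z$ with $Z\in\mathcal R_S$ and its reversal $\overleftarrow Z\in\mathcal R_T$ (the reversal accounting for traversing the $T$-path backwards in the final object). Its two witnessing paths may, by re‑running the first stage with the already‑chosen vertices forbidden, additionally be taken vertex‑disjoint and disjoint from $S\cup T$, since at each point the forbidden set has size $O(k^5)=o(n)$. Concatenating the $S$-to-$Z$ path with the reverse of the $T$-to-$Z$ path produces an $\ell$-path from $S$ to $T$ on at most $2(\ell+t(k-\ell))$ vertices; as each step costs fewer than $k$ new vertices and $t$ is a small polynomial in $k$, one checks this is at most $8k^5$.

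The delicate part is the first stage — showing the reachable family is large, ideally almost all $\ell$-sets. This requires careful bookkeeping of which $\ell$ of the $k$ vertices of each new edge become the updated far end, the defect‑counting that converts abundance of paths into abundance of distinct far ends, and the use of $(k-\ell)\nmid k$ to exclude the periodic obstruction, all while keeping the growing set of forbidden vertices from collapsing the relevant counts.
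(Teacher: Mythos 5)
The paper does not prove this lemma at all: it is imported verbatim from reference~\cite{Non-linear} (Lemma~4.1), so there is no internal proof to compare your argument against. Judged on its own terms, though, your sketch has a genuine gap at its central step, and the gap is instructive because it also exposes that the lemma \emph{as quoted here} is missing a hypothesis.

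The problematic step is the bootstrap ``from a positive fraction to almost all of $\binom{V}{\ell}$ in one further step.'' You justify it by saying that a bad $\ell$-set $Y$ would have to have an atypically small link, which $\delta_d(H)$ forbids. But $\delta_d(H)\ge\mu\binom{n}{k-d}$ controls the \emph{total} number of $(k-d)$-sets completing $Y\cup D$ to an edge; it says nothing about whether any such edge also contains, in the required position, an $\ell$-set from your current reachable family $\mathcal R_S$. That is exactly what you need, and it simply does not follow. Concretely, take $V=A\sqcup B$ with $|A|=|B|=n/2$ and $E(H)=\binom{A}{k}\cup\binom{B}{k}$. Then $\delta_d(H)=\binom{n/2-d}{k-d}\ge\mu\binom{n}{k-d}$ for any fixed $\mu<2^{-(k-d)}$ and $n$ large, yet every $\ell$-path starting inside $A$ stays inside $A$, so $\mathcal R_S$ is confined to $\binom{A}{\ell}$ forever. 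Your ``almost all'' claim is therefore false in general, and your subsequent pigeonhole step (finding a common $Z\in\mathcal R_S\cap\overleftarrow{\mathcal R_T}$) also fails, since two positive-fraction families of ordered $\ell$-sets need not intersect. The same example shows that with $S\subseteq A$ and $T\subseteq B$ there is \emph{no} connecting $\ell$-path whatsoever, i.e.\ the lemma as transcribed in this paper cannot be correct for arbitrarily small $\mu$. Notice that the hierarchy declares a parameter $\beta$ with $1/n\ll\beta\ll\mu,1/k$ which then never appears in the statement; almost certainly the source Lemma~4.1 carries an additional $\beta$-flavoured connectivity/reachability hypothesis that the transcription dropped, and any correct proof must identify and use that hypothesis (or, in the way the paper actually applies the lemma, rely on $\mu$ being at least $1/3$, which rules out the two-clique obstruction when $k-d\ge 2$). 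Finally, your explanation of why $(k-\ell)\nmid k$ is needed is only a placeholder (``periodicity obstruction''); a real proof needs to say what residue class the reachable ends are trapped in when $(k-\ell)\mid k$ and why non-divisibility removes the trap.
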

For $ \ell < k/2 $, we derive the following connecting lemma for $ \ell $-paths from the Kruskal-Katona theorem.
For this, we first introduce the notion of (robust) shadow.
\begin{defi}
Given $ \varepsilon\ge0 $ and $ \ell\ge1 $, the \emph{$ \varepsilon $-robust $ \ell $-shadow} of a $ k $-graph $ H$, denoted by $ \partial_{\varepsilon}^{\ell}(H) \subseteq \binom{V(H)}{k-\ell} $, is the $ (k-\ell) $-graph consisting of all $ (k-\ell) $-sets lying in more than $ \varepsilon n^{\ell} $ edge in $ H $,
that is, $ \partial_{\varepsilon}^{\ell}(H)= \left\{F\in \binom{V(H)}{k-\ell}:\deg_{ H}(F)> \varepsilon n^{\ell}\right\} $.
\end{defi}
Kruskal-Katona theorem studies the size of the (0-robust) shadow of a hypergraph. 
We state the following handy version by Lov\'{a}sz \cite{bollobas1981laszlo}.
\begin{thm}[Kruskal--Katona theorem, \cite{bollobas1981laszlo}] \label{KK}
For all  integers $ 1\le \ell\le k$ and $ n_{k}\in \mathbb R$,  let $H$ be a $ k $-graph  with  at least $ \binom{n_{k} }{k} $ edges.
Then	\[ 	 |\partial_{0}^{\ell}(H)|\ge\binom{n_{k} }{k-\ell}.	\]
\end{thm}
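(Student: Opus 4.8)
This is the Lov\'asz form of the Kruskal--Katona theorem, so the plan is to reproduce its standard proof; I will only sketch it. \textbf{Reductions.} Since $t\mapsto\binom{t}{r}$ is continuous and strictly increasing on $[r-1,\infty)$ with $\binom{r-1}{r}=0$, we may assume $n_k\ge k$ and, after replacing $n_k$ by the unique real $x\ge k$ with $\binom{x}{k}=e(H)$ (which only enlarges the target $\binom{n_k}{k-\ell}$, as $x\ge n_k\ge k>k-\ell-1$), that $e(H)=\binom{x}{k}$; for $n_k<k$ the statement carries no content in our applications. Writing $\partial:=\partial_0^1$ for the ordinary $1$-shadow, note that $\partial_0^\ell(H)=\partial\big(\partial_0^{\ell-1}(H)\big)$, since a $(k-\ell)$-set lies in an edge of $H$ exactly when it is contained in some $(k-\ell+1)$-subset of an edge of $H$. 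Thus it suffices to prove the one-step bound: \emph{for every integer $m\ge1$ and real $y\ge m$, every $m$-graph $G$ with $e(G)\ge\binom{y}{m}$ satisfies $|\partial(G)|\ge\binom{y}{m-1}$.} Applying this with $m=k,k-1,\dots,k-\ell+1$ — at each step the relevant real is $\ge x\ge k>m-1$, so monotonicity of $\binom{\cdot}{\cdot}$ lets us keep the uniform value $x$ — gives $|\partial_0^\ell(H)|\ge\binom{x}{k-\ell}\ge\binom{n_k}{k-\ell}$.

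\textbf{The one-step bound via compression.} For $1\le i<j$ let the $(i,j)$-compression replace every $F\in\mathcal F$ with $j\in F$, $i\notin F$ by $(F\setminus\{j\})\cup\{i\}$ whenever that set is not already present, and keep $F$ otherwise. One checks the two standard facts that this preserves $|\mathcal F|$ and does not increase $|\partial(\mathcal F)|$. Iterating compressions over all pairs until the family stabilizes produces a \emph{compressed} (left-shifted) family $\mathcal F^*$ with $|\mathcal F^*|=e(G)$ and $|\partial(\mathcal F^*)|\le|\partial(G)|$, so it is enough to bound the shadow of a compressed family below. For such families one establishes the combinatorial heart of the theorem: the shadow of a compressed $m$-family of given size is at least that of the colexicographic initial segment of $\binom{[n]}{m}$ of the same size. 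Writing $e(G)$ in its $m$-cascade representation $\binom{a_m}{m}+\binom{a_{m-1}}{m-1}+\cdots+\binom{a_s}{s}$ with $a_m>\cdots>a_s\ge s\ge1$, that initial segment has shadow of size exactly $\binom{a_m}{m-1}+\cdots+\binom{a_s}{s-1}$, so what remains is the numerical implication
\[
\binom{a_m}{m}+\cdots+\binom{a_s}{s}=\binom{y}{m}
\quad\implies\quad
\binom{a_m}{m-1}+\cdots+\binom{a_s}{s-1}\ge\binom{y}{m-1},
\]
which follows from $\binom{t}{r}=\binom{t-1}{r}+\binom{t-1}{r-1}$ together with the fact that $a\mapsto\binom{a}{r-1}/\binom{a}{r}$ is decreasing (equivalently, convexity of $t\mapsto\binom{t}{r}$), by comparing the two sides term by term while peeling off the cascade.

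\textbf{Main obstacle.} The only genuinely non-trivial ingredients are the last two — that colexicographic initial segments minimize shadows (this is Kruskal--Katona in its sharp combinatorial form) and the convexity computation converting the cascade bound into the clean $\binom{y}{m-1}$ form; the two reductions and the elementary properties of compressions are routine bookkeeping. A shifting-free alternative is to prove the one-step bound by induction on $m$ using the link/deletion decomposition $\mathcal F=\mathcal F_v\cup\mathcal F_{\bar v}$ (a disjoint partition) together with the identity $|\partial(\mathcal F)|=|\mathcal F_v\cup\partial(\mathcal F_{\bar v})|+|\partial(\mathcal F_v)|$ applied at a vertex $v$ of minimum degree; this trades the compression lemmas for a more delicate optimization but ends at the same numerical inequality, which is the crux either way.
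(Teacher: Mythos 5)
The paper states this result as a cited theorem (Lov\'asz's version of Kruskal--Katona, referenced to \cite{bollobas1981laszlo}) and gives no proof of its own, so there is nothing in the paper to compare your argument against. What you have written is a correct outline of the standard proof. A few remarks on the sketch itself.

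The reductions are sound: the $\ell$-shadow does factor as an $\ell$-fold iterate of the $1$-shadow, and replacing $n_k$ by the unique real $x\ge k$ with $\binom{x}{k}=e(H)$ is a legitimate strengthening since $\binom{\cdot}{k-\ell}$ is increasing on $[k-\ell-1,\infty)$ and $x\ge n_k$. The one-step bound is indeed Lov\'asz's form for $1$-shadows, and iterating it $\ell$ times with the fixed parameter $x$ is fine because $x\ge k>m-1$ at every stage. Your identity $|\partial\mathcal{F}|=|\mathcal{F}_v\cup\partial(\mathcal{F}_{\bar v})|+|\partial(\mathcal{F}_v)|$ for the shifting-free alternative is also correct.

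The one place where the sketch is genuinely thin is the numerical step converting the cascade expression $\sum\binom{a_i}{i-1}$ into the clean lower bound $\binom{y}{m-1}$; invoking ``convexity and peeling off the cascade'' does not constitute a proof of that lemma. A cleaner route there, which bypasses the cascade entirely, is a double count in the bipartite containment graph between the colex initial segment $\mathcal{C}$ of size $\binom{y}{m}$ and $\partial\mathcal{C}$: every $F\in\mathcal{C}$ contributes $m$ edges, while one checks directly from the colex structure that every $G\in\partial\mathcal{C}$ lies in at most $y-m+1$ members of $\mathcal{C}$, giving $|\partial\mathcal{C}|\ge\frac{m}{y-m+1}\binom{y}{m}=\binom{y}{m-1}$. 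Either way, you are reproving a well-known theorem that the paper, quite reasonably, simply cites; in a write-up you should do the same rather than reprove it.
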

Now we give a connecting lemma for minimum $ \ell $-degree and  $ \ell $-cycles where $1\leq \ell\le k/2$. 
The cases for $ k\ge3 $, $ 1\le\ell<k/2 $ were proved by Bu\ss, H\`an and Schacht \cite{Bu2013Minimum} and Bastos, Mota, Schacht, Schnitzer and Schulenburg \cite{Bastos2016Loose}.
\begin{lemma}[Connecting lemma]	\label{conlem}
Suppose that  $1\leq \ell\le k/2$ and $ \eta>0 $. 
Let $H$ be a $ k $-graph on $n$ vertices satisfying $\delta_{\ell}(H)\ge\left((1/2)^{\frac{k-\ell}{\ell}}+\eta\right)\binom{n}{k-\ell}$. 
Then for any two disjoint ordered $ \ell $-sets $S$ and $T$ of $V(H)$, there exists an $ \ell $-path of length two in $H$ from $S$ to $T$.	
\end{lemma}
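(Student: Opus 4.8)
The plan is to route the desired path through a single $\ell$-set $M$ that sits in the link of both $S$ and $T$; the exponent in the threshold $(1/2)^{(k-\ell)/\ell}$ is exactly the value at which the Kruskal--Katona theorem (Theorem~\ref{KK}) forces the relevant shadows past $\tfrac12\binom{n}{\ell}$, so that the two of them must intersect. First I would fix the shape of the target: an $\ell$-path of length two on vertices $v_1,\dots,v_{2k-\ell}$ has edges $e_1=\{v_1,\dots,v_k\}$, $e_2=\{v_{k-\ell+1},\dots,v_{2k-\ell}\}$ and ends $\{v_1,\dots,v_\ell\}$, $\{v_{2k-2\ell+1},\dots,v_{2k-\ell}\}$; since $\ell\le k/2$, the common part $e_1\cap e_2$ (of size $\ell$) is disjoint from both ends. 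So, writing $m:=k-\ell$, it suffices to produce an $\ell$-set $M$ and $(m-\ell)$-sets $X,Y$ with $S,X,M,Y,T$ pairwise disjoint, $S\cup X\cup M\in H$ and $M\cup Y\cup T\in H$: ordering $e_1=(S,X,M)$ and $e_2=(M,Y,T)$ then yields an $\ell$-path from $S$ to $T$. Equivalently, I want $M$ together with disjoint $X,Y$ such that $X\cup M\in N_H(S)$ and $M\cup Y\in N_H(T)$.

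Next I would show that the $\varepsilon$-robust $(m-\ell)$-shadow of the $m$-graph $N_H(S)$ (which lives on $\ell$-sets) has size more than $\tfrac12\binom{n}{\ell}$, for a sufficiently small $\varepsilon=\varepsilon(\eta,k)>0$, and likewise for $N_H(T)$. Starting from $e(N_H(S))=\deg_H(S)\ge\big((1/2)^{m/\ell}+\eta\big)\binom{n}{m}$, delete from $N_H(S)$ every edge that contains some $\ell$-set whose degree in $N_H(S)$ is at most $\varepsilon n^{m-\ell}$. At most $\binom{n}{\ell}\cdot\varepsilon n^{m-\ell}\le\tfrac\eta2\binom{n}{m}$ edges are removed when $\varepsilon$ is small, so the remaining $m$-graph $N'$ satisfies $e(N')\ge\big((1/2)^{m/\ell}+\tfrac\eta2\big)\binom{n}{m}$, and by construction $\partial_0^{m-\ell}(N')\subseteq\partial_\varepsilon^{m-\ell}(N_H(S))$. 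Writing $e(N')\ge\binom{n'}{m}$, the surplus $\eta/2$ forces $n'\ge\big((1/2)^{1/\ell}+\rho\big)n$ for some $\rho=\rho(\eta,k)>0$ and $n$ large, because $\big((1/2)^{m/\ell}\big)^{1/m}=(1/2)^{1/\ell}$. Applying Kruskal--Katona to $N'$ with shadow parameter $m-\ell$ gives $|\partial_0^{m-\ell}(N')|\ge\binom{n'}{\ell}\ge\big((1/2)^{1/\ell}+\rho\big)^{\ell}\binom{n}{\ell}(1-o(1))>\tfrac12\binom{n}{\ell}$, hence $|\partial_\varepsilon^{m-\ell}(N_H(S))|>\tfrac12\binom{n}{\ell}$, and the same holds with $T$ in place of $S$.

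Finally, since the two robust shadows each have size exceeding $\tfrac12\binom{n}{\ell}$, they share an $\ell$-set $M$, which is automatically disjoint from both $S$ and $T$ (it lies in their links). By the robust-shadow bound there are more than $\varepsilon n^{m-\ell}$ sets $X\in\binom{V}{m-\ell}$ with $S\cup X\cup M\in H$ and more than $\varepsilon n^{m-\ell}$ sets $Y$ with $M\cup Y\cup T\in H$; discarding the $O(n^{m-\ell-1})$ choices of $X$ that meet $T$, fixing one remaining $X$, and then discarding the $O(n^{m-\ell-1})$ choices of $Y$ that meet $S\cup X$ still leaves options, and any such $(M,X,Y)$ completes the path. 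When $\ell=k/2$ the degree $m-\ell$ equals $0$, so $X=Y=\emptyset$ and $(1/2)^{m/\ell}=\tfrac12$; the argument collapses to the simple observation that $N_H(S),N_H(T)\subseteq\binom{V}{\ell}$ each have size $>\tfrac12\binom{n}{\ell}$, hence meet in some $M$ with $S\cup M,\,M\cup T\in H$, and no appeal to Kruskal--Katona is needed.

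The one place requiring care is the middle step: passing from the non-robust Kruskal--Katona output to a robust shadow still of size above $\tfrac12\binom{n}{\ell}$, and verifying that the exponent $(1/2)^{(k-\ell)/\ell}$ is exactly the critical value at which the three small losses --- the $\eta/2$ fraction of deleted edges, the $o(1)$ slack between $\binom{n'}{\ell}$ and $(n'/n)^{\ell}\binom{n}{\ell}$, and the $O(n^{m-\ell-1})$ sets meeting $S\cup T$ --- can all be absorbed. Everything else is routine.
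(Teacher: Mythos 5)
Your proposal is correct and takes essentially the same route as the paper: reduce to showing that the $\varepsilon$-robust $(k-2\ell)$-shadows of the link $(k-\ell)$-graphs $N_H(S)$ and $N_H(T)$ each exceed $\tfrac12\binom{n}{\ell}$ and hence intersect, prove this by a one-pass cleaning to remove edges through low-degree $\ell$-sets followed by Kruskal--Katona, and handle $\ell=k/2$ separately by direct intersection of $N_H(S)$ and $N_H(T)$. The only cosmetic differences are that the paper phrases the cleaning as an iterative deletion (same edge count) and is terser about the final disjointness check for $X$ and $Y$, which you spell out correctly.
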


\begin{proof}
Let $ 1/n<\varepsilon\ll \eta $.
Fix two disjoint ordered $ \ell $-sets $S$ and $T$ of $V(H)$.
In order to get the desired $ \ell $-path, it suffices to find two $ (k-\ell) $-sets $ S_1 $ and $ T_1 $ with $ |S_1\cap T_1|=\ell $ such that $ S\cup S_1\in E(H) $ and $ T\cup T_1\in E(H) $.
For $ \ell=k/2 $, we have $  \min\{|N(S)|, |N(T)|\}\ge\delta_{\ell}(H)\ge(1/2+\eta)\binom{n}{k-\ell}$. 
So there exists an $ \ell $-set $ S_1\in N(S)\cap N(T) $ such that $ S\cup S_1\in E(H) $ and $ T\cup S_1\in E(H) $.
Now we suppose $1\leq \ell< k/2$.
Consider $ N_{S} $ and $ N_{T} $ as two $ (k-\ell) $-graphs on $ V(H) $ with the edge sets $ N(S) $ and $ N(T) $ respectively.
So it suffices to show $ \partial^{k-2\ell}_{\varepsilon} (N_{S})\cap \partial^{k-2\ell}_{\varepsilon} (N_{T})\not=\emptyset $. 
Indeed, suppose $ D\in \partial^{k-2\ell}_{\varepsilon} (N_{S})\cap \partial^{k-2\ell}_{\varepsilon} (N_{T}) $, then it is easy to find $ S_1\in N_{S} $ and $ T_1\in N_{T} $ with $ S_1\cap T_1=D $.

We apply the following procedure iteratively and  get a spanning subgraph of $ N_{S} $, denoted by $ G $, which satisfies that for any $ \ell $-set $ X $ in $ V(G) $, either $ \deg_{G}(X)\ge \varepsilon n^{k-2\ell} $ or $ \deg_{G}(X)=0 $. 
If there is an $ \ell $-set $ B $ with $ \deg_{ N_{S}}(B)< \varepsilon n^{k-2\ell} $, delete all the edges containing $ B $ in $ N_{S} $. 
Note that when the process ends, the number of deleted edges is at most $ \varepsilon n^{k-2\ell}\binom{n}{\ell} $.
By the minimum $ \ell $-degree condition of $ H $, $ e(N_S)=|N(S)|\ge\left((1/2)^{\frac{k-\ell}{\ell}}+\eta\right)\binom{n}{k-\ell} $. 
So as $ \eps \ll \eta $,
\[ e(G)\ge e(N_S)- \varepsilon n^{k-2\ell}\binom{n}{\ell}>\binom{(1/2)^{\frac{1}{\ell}}n+\eps^{\frac{1}{k-\ell}}n}{k-\ell} \] and $ \partial^{k-2\ell}_{\varepsilon} (G)=\partial^{k-2\ell}_{0} (G) $.
Using Theorem~\ref{KK} to $ G $, we get $ |\partial^{k-2\ell}_{0} (G)|>\binom{(1/2)^{\frac{1}{\ell}}n+\eps^{\frac{1}{k-\ell}}n}{\ell}$.
Thus  \[|\partial^{k-2\ell}_{\varepsilon}( N_{S})|\ge |\partial^{k-2\ell}_{\varepsilon} (G)|=|\partial^{k-2\ell}_{0} (G)|>\frac{1}{2}\binom{n}{\ell}  .\]
Similar arguments show that $ |\partial^{k-2\ell}_{\varepsilon} (N_{T})|>\frac{1}{2}\binom{n}{\ell}  $.
Hence $ \partial^{k-2\ell}_{\varepsilon} (N_{S})\cap \partial^{k-2\ell}_{\varepsilon} (N_{T})\not=\emptyset $. 
\end{proof}

\subsection{Absorbing Path Lemma}
Our main contribution of this paper is to prove the following absorbing path  lemma, which gives an absorbing $\ell$-path $P$ which can absorb a small but arbitrary set of vertices.

\begin{lemma} [Absorbing path lemma]\label{abslem} 
Suppose that $k\ge3$, $1\leq \ell< k/2$ and  $1/n\ll\theta\ll\gamma \ll \eta,1/k$.
Let $H$ be a $ k $-graph on $n$ vertices satisfying  $\delta_{\ell}(H)\ge \left(\max\{1/3,(1/2)^{\frac{k-\ell}{\ell}}\}+\eta\right) \binom{n}{k-\ell}$ with $ \ell\ge2 $, or $\delta_{\ell+1}(H)\ge (1/3+\eta) \binom{n}{k-\ell-1}$.
Then there exists an $ \ell $-path $P$ with $|V(P)|\le\gamma n$ such that $ P $  can absorb any set $X\subseteq V(H)\setminus V (P)$ with $ |X|\le \theta n $, $ (k-\ell)\mid |X| $, that is, there exists an $ \ell $-path $Q$ with the same ordered ends as $P$, where $V(Q)=V(P)\cup X$. 
\end{lemma}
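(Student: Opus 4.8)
The plan is to build the absorbing $\ell$-path via the swapping-absorbing framework outlined in the introduction, combined with the lattice-based absorbing method. The construction proceeds in four conceptual stages: producing an abundance of ``end absorbers'' (short $\ell$-paths that can release a $(k-\ell)$-set of interior vertices), showing that any $(k-\ell)$-set can be ``swapped'' with the released set via short swapper paths, concatenating swappers into swapper chains to reach every target set, and finally assembling a single path from these pieces while controlling divisibility obstructions through a reachability/lattice argument.

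\subsection*{Step 1: Connecting infrastructure and reachability}

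First I would fix the hierarchy $1/n \ll \theta \ll \gamma \ll \beta \ll \varepsilon \ll \eta, 1/k$ and set up the basic tools. Under either degree hypothesis, Fact~\ref{mind} gives $\delta_{\ell}(H) \ge (\max\{1/3,(1/2)^{(k-\ell)/\ell}\} + \eta/2)\binom{n}{k-\ell}$, so Lemma~\ref{conlem} applies: any two disjoint ordered $\ell$-sets are joined by an $\ell$-path of length two. (In the case $\delta_{\ell+1}(H) \ge (1/3+\eta)\binom{n}{k-\ell-1}$ one instead keeps track of the stronger $(\ell+1)$-degree, which we will need for counting.) Next, applying Proposition~\ref{c} with $q = 3$: since $\delta_{\ell}(H) \ge (1/3+\eta)\binom{n}{k-\ell}$ (the $1/3$ branch, which holds in all cases by $\max$), \emph{every} three vertices contain a $(\beta,1)$-reachable pair. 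A standard partitioning argument then shows $V(H)$ decomposes into at most two $(\beta', i_0)$-closed sets $V_1, V_2$ for some constant $i_0 = i_0(\beta)$ and $\beta' \ll \beta$; if there are two parts, one of them, say $V_1$, has size $\ge n/2$. I will build the absorbing path inside the large closed part.

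\subsection*{Step 2: End absorbers}

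For an ordered $(k-\ell)$-set $R = (r_1,\dots,r_{k-\ell})$ of vertices, I call a short $\ell$-path $P_R$ (with prescribed ends) an \emph{end absorber for $R$} if $R \cap V(P_R) = \varnothing$ and there is another $\ell$-path on $V(P_R) \cup R$ with the same ends, so that inserting $R$ is possible. The key point of the swapping idea of Reiher--R\"odl--Ruci\'nski--Schacht--Szemer\'edi is that such end-absorbers are much easier to count than genuine absorbers for arbitrary sets: one only needs to free a specific vertex set rather than absorb a given one. I would count, for a typical $(k-\ell)$-set $R$, the number of end absorbers of bounded size; using the connecting lemma (Lemma~\ref{conlem}) and a supersaturation argument as in Proposition~\ref{c}, the number is $\Omega(n^{c})$ for the appropriate constant $c$. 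The $1/3$ threshold enters here: to guarantee that the ``freeing move'' exists we need enough edges meeting the right traces, and $\delta_\ell(H) \ge (1/3+\eta)\binom{n}{k-\ell}$ (resp.\ the $(\ell+1)$-degree version) is exactly what drives the count, matching the $1/3$ in the hypothesis.

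\subsection*{Step 3: Swappers, swapper chains, and lattice control}

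Given a target set $X$ with $(k-\ell)\mid |X|$, split $X$ into $(k-\ell)$-blocks. For each block $R$ and each ``free set'' $R'$ sitting in an end absorber, a \emph{swapper} for the pair $(R,R')$ is a pair of short $\ell$-paths with common ends, one spanning $V \cup R$ and one spanning $V \cup R'$ for some vertex set $V$ — the hypergraph analogue of $(a,b)$ being a swapper for $u,v$ when both $aub,avb$ are paths. The $(\beta',i_0)$-closedness of the large part, together with the reachability definition, supplies many such swappers between any two $(k-\ell)$-sets lying in it, possibly after concatenating $O(i_0)$ elementary swappers into a chain. Because at most $\theta n$ vertices must be absorbed and each swap uses a bounded number of vertices, greedily routing the blocks of $X$ through disjoint swapper chains into distinct end absorbers succeeds as long as $\theta \ll \gamma$; the lattice-based bookkeeping of~\cite{han2017decision} ensures the divisibility/reachability constraints are consistent (the relevant lattice is full because the closed part is large and $(k-\ell)\mid |X|$).

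\subsection*{Step 4: Assembly}

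Finally I would use Lemma~\ref{conlem} to string together a bounded number of end absorbers (with their attached swapper chains) into a single $\ell$-path $P$, inserting short connecting $\ell$-paths of length two between consecutive pieces; since each piece and each connector has bounded size and we use $O(\theta n / (k-\ell))$ of them, $|V(P)| \le \gamma n$. The absorption property is then immediate: given $X$ with $|X| \le \theta n$ and $(k-\ell)\mid|X|$, partition into blocks, route each through a reserved swapper chain to a reserved end absorber, perform the swaps and the final freeing-insertion move, and observe that the result $Q$ is an $\ell$-path on $V(P)\cup X$ with the same ordered ends as $P$.

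\medskip

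\noindent\textbf{Main obstacle.} The hardest part will be Step~3 --- making the swapper-chain mechanism actually spanning, i.e.\ ensuring that \emph{every} $(k-\ell)$-set (not just a positive-density family) can be swapped into an end absorber. This requires the reachability partition to be non-degenerate: one must rule out the possibility that $V(H)$ splits into two closed parts whose associated lattice is a proper sublattice, which would obstruct absorbing sets $X$ straddling the parts. Handling this — likely by showing that under $\delta_\ell(H) \ge (1/3+\eta)\binom{n}{k-\ell}$ either $H$ is already $(\beta',i_0)$-closed, or the two-part structure still admits a full lattice because $(k-\ell) \mid |X|$ is the only constraint — is the technical crux. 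A secondary difficulty is that the connecting lemma Lemma~\ref{conlem} only gives paths of length exactly two, so all the path-surgery must be done with this rigid building block, and each insertion/swap must be realized concretely as a constant-size gadget whose existence is certified by the degree condition.
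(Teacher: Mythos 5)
Your high-level plan is the right one (end absorbers + swappers + lattice bookkeeping, exactly as sketched in the introduction), and you have correctly put your finger on the crux, namely that the reachability partition could be degenerate. But the resolution you propose does not work, and two concrete mechanisms that the actual proof uses are missing from your outline. First, you say you will ``build the absorbing path inside the large closed part.'' This cannot work: the set $X$ to be absorbed is arbitrary subject only to $(k-\ell)\mid|X|$, so its $(k-\ell)$-blocks will in general straddle $V_1$, $V_2$, and the garbage set $U$ from the partition lemma, and a block with vertices in $V_2$ can only be swapped (via $(\beta,2)$-closedness of $V_2$) into $V_2$-positions of an end absorber. The end absorbers therefore must themselves be embedded \emph{across} the partition with the right profile. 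The paper achieves this by proving (Fact~\ref{a}) that the robust edge-vector lattice $I^{\mu}_{\mathcal P}(H')$ contains some genuinely crossing vector $(a,k-a)$ with $1\le a\le k-1$ --- a counting argument using $\delta_d(H)\ge(1/3+\eta)\binom{n}{k-d}$ with $d\ge 2$ --- and then embeds copies of the $k$-partite gadget $A(k,\ell)$ (Proposition~\ref{Prop1}) along edges of that profile, so that the freed set $S'_A$ has a prescribed index vector. Your proposal contains no analogue of this step.

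Second, you assert that ``the relevant lattice is full because the closed part is large and $(k-\ell)\mid|X|$'' and suggest proving either that $H$ is already closed or that the lattice is full. Neither of these is established or needed. The lattice need not be full; the paper only extracts two index vectors $(m,k-\ell-m)$ and $(m-1,k-\ell-m+1)$ from the crossing vector $(a,k-a)$, shows that $(k-\ell)$-sets of these two profiles have many absorbers (Claim~\ref{absorbers}), and then performs a \emph{shift} trick: it reserves a set $R_1$ consisting of $2p$ sets of each profile, pre-absorbs $R_1$ into $P'$ to obtain $P$, and then observes that for any $X$ with $(k-\ell)\mid|X|$ and $|X|\le p$, the vector $i_{\mathcal P}(X\cup R_1)$ decomposes with \emph{nonnegative} coefficients over those two profiles, so $P'$ can absorb $X\cup R_1$. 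Without this shift, the coefficients $x,y$ you would solve for can be negative and the greedy routing of blocks to end absorbers fails. Finally, you never address the garbage set $U$: vertices of $U$ lie in no closed part and cannot be swapped at all, so the absorbing path must already contain all of $U$ from the start --- the paper covers $U\setminus\mathcal F$ by a matching $M$ and threads $M$ into the path. These three omissions (crossing edge vector, reserved-set shift, handling of $U$) are exactly where your proposal would break down if carried out.
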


\begin{defi} \label{defab}
Let $H$ be a $k$-graph. Given a set $ S $ of $ k-\ell $ vertices of $ H $, we call  an ordered set  an \emph{$ S $-absorber}, if it is a sequence of $ \ell $-paths $\mathcal{Q}=(P_1,\dots,P_s)$, and there exists another sequence $\mathcal{Q'}=(P_1',\dots,P_s')$ of $ \ell $-paths such that $ V(\mathcal{Q})=V(\mathcal{Q}')\cup S $ and $ P_i,P_i' $  have the same ends for each $ i\in[s] $.
\end{defi}
It is known that if every $ (k-\ell) $-set has many absorbers, then known probabilistic arguments will produce an absorbing path.
To establish a similar property, we use a variant of the absorbing method originated from \cite{Reiher2019Minimum} and also developed in  \cite{han2017decision}.
The following example illustrates the idea of absorbers.
Given a set of $ k-\ell $ vertices $ \{v_1,\dots,v_{k-\ell}\} $, consider a set of $\ell$-paths $ P_1,\dots,P_{k-\ell} $ of length two (swappers) and a $ k $-graph $ A $ (end-absorber) containing a spanning $\ell$-path $ P_A $ with $ S'_{A}=\{w_1,\dots,w_{k-\ell}\}\subseteq V(A) $. 
For $ i\in[k-\ell] $, $ v_i $ has degree two in $ P_i $ and $V(P_i)\setminus\{v_i\} \cup\{w_i\} $ also forms an $\ell$-path in which both two edges contain $ w_i $ and with the same ends as $ P_i $.
Moreover, we require that $A- S'_{A}$ also contains a spanning $\ell$-path with the same ends as $ P_A  $. 
That is, when we absorb $ \{v_1,\dots,v_{k-\ell}\} $, $ v_i $ will play the role of $  w_i $ in $ P_i $ for $ i\in[k-\ell] $ and $ w_1,\dots,w_{k-\ell} $ will be put inside $ A- \{w_1,\dots,w_{k-\ell}\}   $. (See Figure~\ref{f3}).

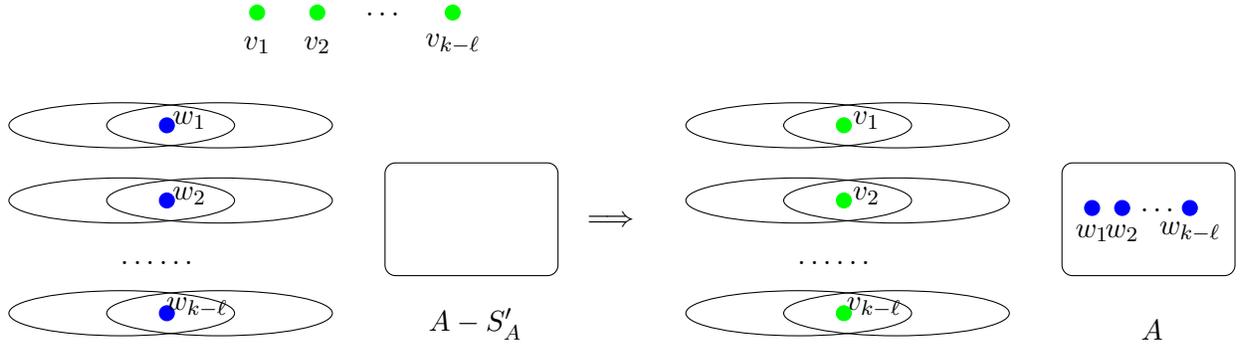
\begin{figure}[h]
\begin{center}
\begin{tikzpicture}
[inner sep=2pt,
vertex/.style={circle, draw=black!50, fill=black!50},
rect/.style={rectangle, inner sep=7,minimum size=0.8},
]			
\node at (1.8,4) [vertex,color=green] {};
\node at (2.6,4) [vertex,color=green] {};
\node at (3.5,3.8) [label=above:$\dots$] {};
\node at (4.4,4) [vertex,color=green] {};
\node at (1.8,3.3) [label=above:$v_1$] {};
\node at (2.6,3.3) [label=above:$v_2$] {};
\node at (4.4,3.3) [label=above:$v_{k-\ell}$] {};
\draw (0,0) ellipse (1.5 and 0.3);
\draw (1.3,0) ellipse (1.5 and 0.3);
\node at (0.6,0) [vertex,color=blue] {};	
\node at (1,-0.2) [label=above:$w_{k-\ell}$] {};
\draw (0,1.5) ellipse (1.5 and 0.3);
\draw (1.3,1.5) ellipse (1.5 and 0.3);
\node at (0.6,1.5) [vertex,color=blue] {};	
\node at (0.9,1.3) [label=above:$w_2$] {};
\draw (0,2.5) ellipse (1.5 and 0.3);
\draw (1.3,2.5) ellipse (1.5 and 0.3);
\node at (0.6,2.5) [vertex,color=blue] {};	
\node at (0.9,2.3) [label=above:$w_1$] {};
\node at (0.5,0.5) [label=above:$\dots$$\dots$] {};
\draw[rounded corners] (3.5,0.5) rectangle (5.8,2);
\node at (4.7,-0.5) [label=above:$ A-S'_{A}  $] {};
\node at (6.5,1) [label=above:$ \Longrightarrow  $] {};			
\draw (9,0) ellipse (1.5 and 0.3);
\draw (10.3,0) ellipse (1.5 and 0.3);
\node at (9.6,0) [vertex,color=green] {};	
\node at (10,-0.2) [label=above:$v_{k-\ell}$] {};
\draw (9,1.5) ellipse (1.5 and 0.3);
\draw (10.3,1.5) ellipse (1.5 and 0.3);
\node at (9.6,1.5) [vertex,color=green] {};	
\node at (9.9,1.3) [label=above:$v_2$] {};
\draw (9,2.5) ellipse (1.5 and 0.3);
\draw (10.3,2.5) ellipse (1.5 and 0.3);
\node at (9.6,2.5) [vertex,color=green] {};	
\node at (9.9,2.3) [label=above:$v_1$] {};
\node at (9.5,0.5) [label=above:$\dots$$\dots$] {};
\draw[rounded corners] (12.5,0.5) rectangle (14.8,2);
\node at (13.7,-0.5) [label=above:$ A $] {};
\node at (12.9,1.4) [vertex,color=blue] {};	
\node at (13.3,1.4) [vertex,color=blue] {};
\node at (14.2,1.4) [vertex,color=blue] {};
\node at (13.8,1.2) [label=above:$\dots$] {};
\node at (12.9,0.8) [label=above:$w_1$] {};
\node at (13.3,0.8) [label=above:$w_2$] {};
\node at (14.2,0.8) [label=above:$w_{k-\ell}$] {};	
\end{tikzpicture}		
\caption{a $\{v_1,\dots,v_{k-\ell}\}$-absorber, where $ S'_{A}= \{w_1,\dots,w_{k-\ell}\} $.}	
\label{f3}	
\end{center}
\end{figure}	

Our actual absorbers are a little bit more complicated, namely, we allow swapper chains of constant length and may concatenate them.
The following absorbing path was constructed and used in \cite{MR2652102}, which we will use as an end-absorber in our proof.
\begin{prop}[\cite{MR2652102}, Proposition 6.1]\label{Prop1} 
For all integers $k\ge3$ and  $1\leq \ell \leq k-1$ such that $(k-\ell)\nmid k$, there is a $k$-partite $k$-graph $A(k,\ell)$ with the following properties.
\begin{enumerate}
\item $|V(A(k,\ell))|\le k^{4}$.
\item  $V(A(k,\ell))=S'\cup X$, where $ S' $ and $ X $ are disjoint and $|S'| = k-\ell $.
\item $A(k,\ell)$ contains an $ \ell $-path $P$ with vertex set 
$X$ and ordered ends $ P^{beg} $ and $ P^{end} $.
\item \label{prop1_4}$A(k,\ell)$ contains an $ \ell $-path $Q$ with vertex set $S'\cup X$ and ordered ends $ P^{beg} $ and $ P^{end} $.
\item \label{prop1_5}Each edge of $A(k,\ell)$ contains at most one vertex of $ S' $.
\item Each vertex class of $A(k,\ell)$ contains at most one vertex of $ S' $.
\end{enumerate}
\end{prop}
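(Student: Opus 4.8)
The plan is to construct $A(k,\ell)$ explicitly, as the union of two $\ell$-paths $P\subseteq Q$ sharing the same ordered pair of ends, where $Q$ arises from $P$ by splicing in the $k-\ell$ vertices of $S'$ one at a time, a full $k$ edges apart from one another; the edge set of $A(k,\ell)$ is then the union of those of $P$ and $Q$, with $X:=V(P)$ and $S':=V(Q)\setminus V(P)$. Write $a:=k-\ell$; the hypothesis $(k-\ell)\nmid k$ forces $a\ge 2$, and in the range $\ell<k/2$ --- which is the only range used in this paper, and in which $(k-\ell)\nmid k$ holds automatically since a divisor of $k$ exceeding $k/2$ must equal $k$ --- we additionally have $a>\ell$, which is the slack the construction needs.

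First I would let $P$ be the canonical $\ell$-path with $M:=(a-1)k$ edges on vertices $p_1,\dots,p_N$ occupying positions $1,\dots,N$, where $N=Ma+\ell$, so that $E^P_j=\{p_{(j-1)a+1},\dots,p_{(j-1)a+k}\}$ for $j\in[M]$. To build $Q$ I would read off $p_1,\dots,p_N$ in order and, for $i=1,\dots,a$, splice a new vertex $w_i$ into the \emph{private part} of the $j_i$-th edge, where $j_i:=(i-1)k+1$; here the private part of an edge is the block of positions it shares with neither neighbouring edge, which has size at least $a-\ell>0$ precisely because $\ell<k/2$, and within which I would also keep $w_i$ clear of the first $\ell$ and last $\ell$ positions of the whole path. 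Then I would verify the routine facts: $Q$ is a bona fide $\ell$-path with $M+1$ edges; deleting $S'$ from $Q$ and relabelling recovers $P$; the first $\ell$ and the last $\ell$ vertices of $P$ and of $Q$ agree as ordered tuples (no $w_i$ lands there); and since $E^Q_{j_1},\dots,E^Q_{j_a}$ are pairwise non-consecutive, hence pairwise disjoint (as $\ell<k/2$), each edge of $A(k,\ell)$ meets $S'$ in at most one vertex. Since $|V(A(k,\ell))|=N+a=(a-1)ka+k\le k^3<k^4$, this yields properties (1)--(5).

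The step that requires actual thought is $k$-partiteness, together with property (6): I must produce a \emph{single} $k$-colouring of $V(A(k,\ell))$ making every edge of $P$ \emph{and} every edge of $Q$ rainbow, with the $w_i$ in distinct colour classes. I would colour $p_t$ by the residue $t\bmod k$. Each $E^P_j$ is then rainbow, and each $w$-free edge of $Q$ is rainbow as well, because deleting some $w_i$ shifts all positions past it down by one \emph{uniformly}, so such an edge still occupies $k$ consecutive positions \emph{of $P$}. For an edge $E^Q_{j_i}$, its $k-1$ remaining ($p$-)vertices occupy $k-1$ consecutive positions of $P$, so they miss a single residue mod $k$, which is forced on $w_i$; a direct computation gives that residue as $(j_i-1)a-i+1\pmod k$, and since I chose $j_i\equiv 1\pmod k$ this collapses to $1-i\pmod k$. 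As $a\le k-1$, the residues $1-i$ for $i\in[a]$ are pairwise distinct, which is property (6), and $w_i$ lies in no other edge, so nothing else is disturbed; hence $A(k,\ell)$ is $k$-partite.

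The main obstacle --- and the reason the $w_i$ are placed a full $k$ edges apart rather than in consecutive edges --- is exactly this colour compatibility: a vertex common to $P$ and $Q$ has, in general, different ``phases'' modulo $k$ in the two path-orderings, so a careless choice yields a gadget that is genuinely \emph{not} $k$-partite. With the $w_i$ in consecutive edges the forced colours would be $(i-1)(a-1)\bmod k$, and these collide whenever $k\mid t(a-1)$ for some $1\le t\le a-1$ (for instance when $k=9$ and $a=7$, with $t=3$); choosing the insertion edges $\equiv 1\pmod k$ synchronizes the two phases and removes the collisions. One further point of care, relevant only to the full stated range $1\le\ell\le k-1$ with $(k-\ell)\nmid k$: when $k/2\le\ell\le k-2$ an edge has no private part (each interior vertex of an $\ell$-path already lies in at least two edges), so one must instead place the $w_i$ far enough apart that no two of the edge-blocks they occupy overlap, and re-run the colouring bookkeeping. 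The tight case $\ell=k-1$ is excluded from the start since $k-\ell=1\mid k$, and more generally the hypothesis $(k-\ell)\nmid k$ excludes exactly the cases that behave like tight cycles, for which a different and simpler absorbing structure is used in place of this gadget.
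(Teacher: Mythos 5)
This proposition is cited by the paper from~\cite{MR2652102} without proof, so there is no internal argument to compare against; what you have written is a self-contained construction attempt, and it is worth evaluating on its own terms. For $\ell<k/2$ (the only range this paper invokes) your construction is essentially sound: placing each $w_i$ in the private part of an edge of $Q$ forces $w_i$ to lie in exactly one edge of $Q$, so the colour $1-i\pmod k$ is well-defined, the $k-1$ remaining $p$-positions are consecutive in the $P$-ordering, and the whole gadget is $k$-partite. (There is a small indexing slip --- with $M=(a-1)k$ edges in $P$ your $j_a=(a-1)k+1$ equals $M+1$, which is out of range as a $P$-edge index though it is exactly the last edge of $Q$; the intent is recoverable, but the text is ambiguous about whether $j_i$ indexes $P$ or $Q$.)

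The genuine gap is that Proposition~\ref{Prop1} is stated for \emph{all} $\ell$ with $(k-\ell)\nmid k$, and this hypothesis does \emph{not} force $\ell<k/2$: for instance $k=5$, $\ell=3$ has $k-\ell=2\nmid 5$ but $\ell>k/2$. In that regime your construction breaks, and the fix you sketch does not repair it. When $a:=k-\ell\le\ell$, the private part of an interior edge of $Q$ has size $a-\ell\le 0$, so every $w_i$ necessarily lies in at least two consecutive edges $E^Q_{j'}$ and $E^Q_{j'+1}$. The missing residue in $E^Q_{j'}$ is $(j'-1)a-c\pmod k$ (with $c$ the number of $w$'s to the left), while the missing residue in $E^Q_{j'+1}$ is $j'a-c\pmod k$; these differ by $a\not\equiv 0\pmod k$, so no single colour for $w_i$ makes both edges rainbow. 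Spacing the $w_i$'s further apart fixes property~(5) but leaves this colour conflict intact, so ``re-running the bookkeeping'' cannot succeed with the residue colouring you use. A different gadget (as in~\cite{MR2652102}) is needed to cover $k/2\le\ell\le k-2$ with $(k-\ell)\nmid k$; alternatively, if you only want the version the paper actually uses, you should explicitly restrict the statement to $\ell<k/2$ rather than claim the full range.
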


The lattice-based absorbing method features a partition lemma, which gives a partition of $ V(H) $ such that each part is closed. We need the following partition lemma in our context.
\begin{lemma} [\cite{Non-linear}, Lemma 5.4]\label{part} 
Suppose that integers $ c,k\ge2 $, $ 1\le\ell <k $ and $ 0<1/n \ll\beta \ll \beta'\ll \delta\ll1/c $. 
Let $H$ be a $k$-graph on $n$ vertices and every set of $ c+1 $ vertices in $ V(H) $ contains two vertices that are $ (\beta', 1) $-reachable in $ H $. 
Then there exists a partition $ \mathcal{P} $ of  $V(H)$ into $ V_1,V_2,\dots,V_{r},U $ with $ r \le c $ such that for any $ i\in[r]$, $|V_{i}| \ge (\delta-\beta')n$ and $ V_{i} $ is $ (\beta, 2^{c-1}) $-closed in $ H $, and $ 0\le |U| \le c\delta n $.
\end{lemma}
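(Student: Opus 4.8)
The plan is to run the reachability/lattice-based argument that is standard for partition lemmas of this kind. Fix constants $1/n\ll\beta\ll\beta'\ll\delta\ll1/c$, and form the \emph{reachability graph} $G$ on $V(H)$ by joining $u$ and $v$ exactly when they are $(\beta',1)$-reachable in $H$. The hypothesis that every $c+1$ vertices of $V(H)$ contain a $(\beta',1)$-reachable pair says precisely that $\alpha(G)\le c$. I would then cover all but $O(\delta n)$ vertices of $V(H)$ by at most $c$ parts, each closed at level $2^{c-1}$, using $\alpha(G)\le c$ both to bound the number of parts and to limit the growth of the closure level.

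The technical core is a composition lemma for reachable sets. Writing $N^{*}_{\gamma,i}(u)$ for the set of vertices $(\gamma,i)$-reachable to $u$, the claim is: if $u$ and $v$ have at least $\delta n$ common $(\gamma,i)$-reachable neighbours, i.e.\ $|N^{*}_{\gamma,i}(u)\cap N^{*}_{\gamma,i}(v)|\ge\delta n$, then $u$ and $v$ are $(\gamma',2i)$-reachable for a suitable $\gamma'\ll\gamma$. To prove it, pick a common ``middle vertex'' $w$ in that intersection: given a reachable $((2k-\ell)i-1)$-set $T_{1}$ for $\{u,w\}$ and a disjoint reachable set $T_{2}$ for $\{w,v\}$, the set $T=T_{1}\cup T_{2}\cup\{w\}$ is a reachable $((2k-\ell)(2i)-1)$-set for $\{u,v\}$: glue the $i$ length-two $\ell$-paths on $T_{1}\cup\{u\}$, resp.\ on $T_{1}\cup\{w\}$, to the $i$ length-two $\ell$-paths on $T_{2}\cup\{w\}$, resp.\ on $T_{2}\cup\{v\}$, keeping the ends matched in pairs. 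Counting gives on the order of $\delta n\cdot(\gamma n^{(2k-\ell)i-1})^{2}$ such sets up to a bounded multiplicity, which exceeds $\gamma' n^{(2k-\ell)(2i)-1}$ — note that the linear factor $\delta n$ of choices for $w$ is exactly what supplies the missing power of $n$, so a single common neighbour would not suffice. The same estimate yields the merging corollary: if $U$ and $U'$ are each $(\gamma,i)$-closed with $|U\cap U'|\ge\delta n$, then $U\cup U'$ is $(\gamma',2i)$-closed, where one pads the shorter witnesses with the abundant length-two $\ell$-paths so that levels match.

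With these tools I would organize the extraction of the partition as an induction on $c$. For $c=1$, $\alpha(G)\le 1$ forces $G$ to be complete, so $V(H)$ is a single $(\beta',1)$-closed part. For the inductive step, a cleaning is carried out first: iteratively deleting vertices of $G$-degree below $\delta n$ removes at most $c\delta n$ vertices in total (otherwise the resulting degeneracy bound would produce an independent set of size $>c$ in $G$), and these go into $U$. On the cleaned graph some vertex $v$ then has degree $\ge\delta n$, and around it one extracts a part $V_{1}$ with $|V_{1}|$ linear in $n$ that is genuinely \emph{closed}; here the composition lemma is used repeatedly so that vertices placed in $V_{1}$ pairwise share $\ge\delta n$ reachable neighbours rather than only a common centre. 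Deleting $V_{1}$ leaves $R$ with $\alpha(G[R])\le c-1$ — else a maximum independent set of $G[R]$ together with the centre would be an independent set of $G$ of size $c+1$ — so the inductive hypothesis applied to $R$ yields $V_{2},\dots,V_{r}$ with $r\le c$, each closed at level $2^{c-2}$, plus a leftover of size $\le(c-1)\delta n$; one more doubling raises every part to level $2^{c-1}$. Collecting the centres, the vertices reachable to none of them, and any part that ends up smaller than $\delta n$ into $U$ gives $|U|\le c\delta n$ and $|V_{i}|\ge(\delta-\beta')n$, as required.

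I expect the main obstacle to be the cleaning step and the bookkeeping rather than any single hard idea. One must fix the hierarchy $1/n\ll\beta\ll\beta'\ll\delta\ll1/c$ so that the bounded number of composition steps and counting estimates (bounded in terms of $c$ and $k$) all survive the limit $1/n\ll\beta$; and, more delicately, one must clean and cluster $G$ enough that each extracted part genuinely carries the ``$\ge\delta n$ common reachable neighbours'' structure that the composition lemma requires, since mere reachability to a single centre is one power of $n$ short of certifying closedness. The exponent $2^{c-1}$ is exactly the recursion depth: at most $c-1$ nested level-doublings, with $\alpha(G)\le c$ forcing the recursion to terminate within $c$ rounds, each round spending one unit of the independence budget.
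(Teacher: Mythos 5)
Your overall framework is the right one for this lemma: passing to the $(\beta',1)$-reachability graph $G$ so that the hypothesis becomes $\alpha(G)\le c$, proving a composition lemma in which the reachability level doubles while the constant degrades, and recursing at most $c-1$ times to produce the level $2^{c-1}$. The composition lemma and the merging corollary are stated and justified correctly, including the key counting point that the $\delta n$ choices of the middle vertex $w$ supply exactly the missing power of $n$.

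The genuine gap is the extraction of $V_1$, and it is not ``bookkeeping''. You propose to take a vertex $v$ of $G$-degree at least $\delta n$ and extract around it a closed part $V_1$, but no construction is given. The natural candidate $V_1=N_G(v)\cup\{v\}$ is \emph{not} closed at any bounded level: for $u,u'\in N_G(v)$ the only guaranteed common reachable neighbour is $v$ itself, which, as you yourself note, is one power of $n$ short of what the composition lemma requires, and the cleaning only gives $|N_G(u)|,|N_G(u')|\ge\delta n$, which says nothing about $|N_G(u)\cap N_G(u')|$. Replacing $V_1$ by $\{u:|N_G(u)\cap N_G(v)|\ge\delta'n\}$ repairs the common-neighbourhood count for pairs through $v$ but not for two arbitrary $u,u'\in V_1$, and this set may no longer contain all of $N_G(v)$. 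The latter matters because your independence-decrement step (``a maximum independent set of $G[R]$ together with the centre $v$ would give an independent set of size $c+1$'') needs $N_G(v)\subseteq V_1\cup U$, i.e.\ no vertex of $R$ is $G$-adjacent to $v$. So $V_1$ must simultaneously be closed \emph{and} swallow the whole $G$-neighbourhood of the centre; these two demands pull against each other, and reconciling them is the substantive work of the cited lemma. There it is done not by carving out one part around one vertex, but by iterating the reachability level through a hierarchy of constants $\beta'=\gamma_1\gg\gamma_2\gg\cdots$ and classifying vertices by which of at most $c$ pairwise-far representatives they are close to at the appropriate level. Two minor points: the exceptional-set arithmetic does not close as written ($c\delta n$ from the cleaning plus $(c-1)\delta n$ from the recursive call exceeds the allowed $c\delta n$, so a nested scale of cleaning thresholds is needed); and the final ``padding'' is unnecessary, since if $W$ is $(\gamma,i)$-closed with $|W|\ge\delta n$ then any $u,u'\in W$ already have at least $|W|-2$ common $(\gamma,i)$-reachable neighbours, so the composition lemma alone lifts the level.
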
 
Let $ \mathcal{P}=\{V_1,V_2,\dots,V_{r},U\} $ be a vertex partition of $ H $. 
The \emph{index vector} $ i_{\mathcal{P}}(S) \in \mathbb{Z}^r  $ of a subset $ S\subseteq V  $ with respect to $ \mathcal{P} $ is the vector whose coordinates are the sizes of the intersections of $ S $ with each  $ V_{i} $, $ i\in[r] $. 
We call a vector $ \textbf{i} \in \mathbb{Z}^r  $ an $ s $-vector if all its coordinates are non-negative and their sum equals  $ s $. 
Given  $\mu > 0 $, a $ k $-vector $ \textbf{v} $ is called a $ \mu $-robust edge-vector if there are at least $ \mu n^{k} $ edges $ e $  in $ H $ satisfying $ i_{\mathcal{P}} (e) = \textbf{v} $. 
Let $ I^{\mu}_{\mathcal{P}} (H) $ be the set of all $ \mu $-robust edge-vectors.

Now we are ready to prove  Lemma~\ref{abslem}. 
The proof follows the scheme of the absorbing method and uses Lemma~\ref{con lem} and Lemma~\ref{conlem} in the obvious way. 
The additional work comes from the fact that not all the  $ (k-\ell) $-sets  have many absorbers. 
To address this we use Lemma~\ref{part}  to find a partition of $ V (H) $ into at most three parts, and classify the $ (k-\ell) $-sets that do have many absorbers. 
Then we show that we can always partition the leftover vertices together with a reserved set $ R_1 $ into $ (k-\ell) $-sets  that have many absorbers in the absorbing path.
\begin{proof}[Proof of Lemma~\ref{abslem}]
Suppose we have the constants satisfying the following hierarchy
\[1/n\ll\theta\ll\beta\ll \beta'\ll\mu\ll\delta\ll\gamma \ll \eta,1/k.\]
Let $H$ be a $k$-graph on $n$ vertices such that 
\[\delta_{\ell+1}(H)\ge (1/3+\eta) \binom{n}{k-\ell-1} \ \ \text{or}  \ \ \delta_{\ell}(H)\ge \left(\max\left\{1/3,(1/2)^{\frac{k-\ell}{\ell}}\right\}+\eta\right) \binom{n}{k-\ell}.\]
Applying Proposition~\ref{c} to $ H $, we get that  every triple of  vertices in $ V(H) $ contains two vertices which are $ (\beta, 1) $-reachable in $ H $. 
Without loss of generality, we suppose that $ \delta_{d}(H)\ge (1/3+\eta) \binom{n}{k-d} $ with $ d\ge2 $. 
So by Lemma~\ref{part}, we get a partition $\mathcal{P} $ of $ V(H) $ such that $ \mathcal{P}=\{V_1,U\} $ or $ \mathcal{P}=\{V_1,V_2,U\} $,  where $|V_{i}| \ge (\delta-\beta')n$ and $ V_{i} $ is $ (\beta, 2) $-closed in $ H $ for $ i\in[2] $, and $ 0\le |U| \le 2\delta n $.
Note that the case $ \mathcal{P}=\{V_1,U\} $ is indeed simpler. 
However, to unify the arguments, when $ \mathcal{P}=\{V_1,U\} $ we arbitrarily split $ V_{1} $ into two sets of equal size and by abusing the notation we call the resulting partition $ \mathcal{P}=\{V_1,V_2,U\} $. 
So we only need to deal with one case, where $ \mathcal{P}=\{V_1,V_2,U\} $, $|V_{i}| \ge (\delta-\beta')n$ and $ V_{i} $ is $ (\beta, 2) $-closed in $ H $ for $ i\in[2] $.
Suppose $ \mathcal{P}=\{V_1,V_2,U\} $ and let $ H':=H-U $.
\begin{fact}\label{a} 
There exists $ 1\le a\le k-1 $ such that $ (a,k-a)\in  I^{\mu}_{\mathcal{P}} (H') $.
\end{fact}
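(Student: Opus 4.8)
The plan is to show that the edges of $H'=H-U$ with a \emph{mixed} index vector $(a,k-a)$, $1\le a\le k-1$, already number $\Omega_k(\delta n^k)$; since there are only $k-1$ such vectors, the pigeonhole principle then yields one that is the index vector of at least $\mu n^k$ edges, i.e.\ lies in $I^{\mu}_{\mathcal{P}}(H')$. First observe that every edge of $H'$ avoids $U$, so its index vector with respect to $\mathcal{P}=\{V_1,V_2,U\}$ has the form $(i,k-i)$ with $0\le i\le k$; the only non-mixed possibilities are $(k,0)$ and $(0,k)$, i.e.\ edges lying entirely inside $V_1$ or entirely inside $V_2$. The naive approach — bounding $e(H')\le\binom{|V_1|}{k}+\binom{|V_2|}{k}+(\#\text{mixed edges})$ and comparing with the lower bound $e(H')\ge(\tfrac13+\tfrac{\eta}{2})\binom{n}{k}$ coming from the minimum degree — does \emph{not} work, because Lemma~\ref{part} only guarantees $|V_i|\ge(\delta-\beta')n$, and if $|V_2|$ is this small then $\binom{|V_1|}{k}$ alone can exceed $\tfrac13\binom{n}{k}$. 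This is the main obstacle, and the way around it is to exploit the minimum degree condition \emph{from inside the smaller part}.

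After relabelling we may assume $|V_2|\le|V_1|$, so $(\delta-\beta')n\le|V_2|\le n/2$. By Fact~\ref{mind} applied to the degree assumption $\delta_d(H)\ge(\tfrac13+\eta)\binom{n}{k-d}$ we get $\delta_1(H)\ge(\tfrac13+\eta)\binom{n-1}{k-1}$, and since each vertex lies in at most $|U|\binom{n-2}{k-2}\le 2\delta n\binom{n-2}{k-2}\le\tfrac{\eta}{4}\binom{n-1}{k-1}$ edges meeting $U$ (using $\delta\ll\eta,1/k$), we have $\deg_{H'}(v)\ge(\tfrac13+\tfrac{\eta}{2})\binom{n-1}{k-1}$ for every $v\in V(H')$. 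Now fix $v\in V_2$: the number of edges of $H'$ through $v$ that lie inside $V_2$ is at most $\binom{|V_2|-1}{k-1}\le\binom{\lfloor n/2\rfloor}{k-1}\le(2^{-(k-1)}+o(1))\binom{n-1}{k-1}$, and since $k\ge3$ we have $2^{-(k-1)}\le\tfrac14<\tfrac13$. Hence for large $n$ at least $(\tfrac13-\tfrac14)\binom{n-1}{k-1}\ge\tfrac1{12}\binom{n-1}{k-1}$ of the edges of $H'$ through $v$ contain $v\in V_2$ and at least one vertex of $V_1$, i.e.\ are mixed.

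Finally, summing over $v\in V_2$ and noting that a mixed edge has between $1$ and $k-1$ vertices in $V_2$, the number of mixed edges of $H'$ is at least $\tfrac{1}{k-1}\,|V_2|\cdot\tfrac{1}{12}\binom{n-1}{k-1}\ge\tfrac{\delta-\beta'}{12(k-1)}\,n\binom{n-1}{k-1}=\tfrac{k(\delta-\beta')}{12(k-1)}\binom{n}{k}\ge\tfrac{\delta-\beta'}{12}\binom{n}{k}$. By pigeonhole over the $k-1$ mixed index vectors, some $(a,k-a)$ with $1\le a\le k-1$ is the index vector of at least $\tfrac{\delta-\beta'}{12(k-1)}\binom{n}{k}$ edges of $H'$; since $\beta'\ll\delta$ and $\mu\ll\delta,1/k$ in the hierarchy, this exceeds $\mu n^k$ for large $n$, so $(a,k-a)\in I^{\mu}_{\mathcal{P}}(H')$, which is Fact~\ref{a}. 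The only points requiring care are that the inequality $2^{-(k-1)}<\tfrac13$ genuinely uses $k\ge3$ (it fails for $k=2$), and that the lower bound $|V_2|\ge(\delta-\beta')n$ from Lemma~\ref{part} is what converts the per-vertex count of mixed edges into a global count of order $n^k$.
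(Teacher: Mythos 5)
Your proof is correct, but it takes a genuinely different route from the paper's. The paper proves Fact~\ref{a} by contradiction: assuming $I^{\mu}_{\mathcal{P}}(H')\subseteq\{(0,k),(k,0)\}$, it double-counts the pairs $(S,e)$ where $S$ is a $d$-set with index vector $(1,d-1)$ and $e\in H'$ contains $S$. Because such an $S$ already straddles $V_1$ and $V_2$, every edge through it is automatically mixed, so the upper bound $k^{d+1}\mu(n')^k$ follows without needing to know anything about $|V_1|$ versus $|V_2|$; the lower bound $n_1\binom{n'-n_1}{d-1}\bigl(\delta_d(H)-2\delta n\binom{n}{k-d-1}\bigr)$ comes directly from the $d$-degree condition, and the contradiction uses only $\mu\ll\delta$. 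This requires $d\ge 2$ (so that $(1,d-1)$ is a legitimate mixed vector), which the lemma's hypotheses guarantee. You instead reduce to $\delta_1$ via Fact~\ref{mind}, relabel so that $V_2$ is the smaller part (hence $|V_2|\le n/2$), and show that each $v\in V_2$ lies in many mixed edges of $H'$ because the edges of $H'$ through $v$ that avoid $V_1$ number at most $\binom{|V_2|-1}{k-1}\le(2^{-(k-1)}+o(1))\binom{n-1}{k-1}$, and $2^{-(k-1)}\le 1/4<1/3$ for $k\ge3$. What your approach buys: it works at the level of single vertices and sidesteps the need to choose the auxiliary index vector $(1,d-1)$, making the mixed-edge count more transparent. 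What it costs: it crucially uses the numerical coincidence $2^{-(k-1)}<1/3$ (and hence $k\ge3$), and it needs the structural information $|V_2|\le n/2$, whereas the paper's double count is indifferent to the relative sizes of $V_1$ and $V_2$ and would work with any positive constant in place of $1/3$. Both derivations correctly place the final count above $\mu n^k$ using $\beta'\ll\mu\ll\delta\ll 1/k$ and pigeonhole over the $k-1$ mixed vectors. The only minor blemish in your write-up is the chain $(\tfrac13-\tfrac14)\binom{n-1}{k-1}\ge\tfrac1{12}\binom{n-1}{k-1}$, which is an equality rather than a strict improvement; it is the extra $\eta/2$ slack that absorbs the $o(1)$ error in the estimate $\binom{\lfloor n/2\rfloor}{k-1}\le(2^{-(k-1)}+o(1))\binom{n-1}{k-1}$, and this should be said explicitly.
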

\begin{proof}
Let $ n':=n-|U| $ and $ n_1:=|V_1| $. 
Then $ n'\ge (1-2\delta) n $ and $\min\{n_1,n'-n_1\} \ge (\delta-\beta')n$.
Suppose  $ (a,k-a)\notin  I^{\mu}_{\mathcal{P}} (H') $ for all $ 1\le a\le k-1 $, that is, $ I^{\mu}_{\mathcal{P}} (H')\subseteq \{(0,k),(k,0)\}$.
So by $ d\ge2 $, we have  $ \sum_{S:  i_{\mathcal{P}}(S)=(1,d-1)}\deg_{H'}(S)\le  k\mu (n')^{k}\cdot k^{d}=k^{d+1}\mu (n')^{k} $.
On the other hand, by the minimum degree condition of $ H $, we have
\[
\begin{aligned}
\sum_{S:  i_{\mathcal{P}}(S)=(1,d-1)}\deg_{H'}(S)&\ge n_1\binom{n'-n_1}{d-1}\left(\delta_{d}(H)-2\delta n \binom{n}{k-d-1}\right)\\
&\ge (\delta-\beta')n\binom{(\delta-\beta')n}{d-1}\left((1/3+\eta) \binom{n}{k-d}-2\delta n \binom{n}{k-d-1}\right) \\
& >\delta^{d}/3\binom{n'}{k}> k^{d+1}\mu (n')^{k},
\end{aligned}
\] 
which is a contradiction.
So there exists $ 1\le a\le k-1 $ such that $ (a,k-a)\in  I^{\mu}_{\mathcal{P}} (H') $.	
\end{proof}

Suppose  $ (a,k-a)\in  I^{\mu}_{\mathcal{P}} (H') $ as in Fact~\ref{a}, where $ 1\le a\le k-1 $.
Let 
\[m:= 
\begin{cases}
	a & {a\le k/2}\\
	a-\ell+1 & {a> k/2}.
\end{cases} \]
Then $ 1\le m\le a $ and $ 0\le k-\ell-m\le k-a-1 $.
Let $ \mathcal{S} $ be the family of all $ (k-\ell) $-sets $ S $ with $ i_{\mathcal{P}}(S)\in \{(m,k-\ell-m),(m-1,k-\ell-m+1)\}$.
So for any $ S\in\mathcal{S} $, we have $ |S\cap V_1|\le a$ and $|S\cap V_2|\le k-a$.
The following claim says that every $ (k-\ell) $-set $ S\in\mathcal{S} $ has many absorbers. 
We postpone its proof later.
\begin{claim}\label{absorbers}
There exists $ b=(4k-2\ell-1)(k-\ell)+r $ for some $ r\le k^{4} $ such that the following holds. 
For any $ (k-\ell) $-set $ S\in \mathcal{S} $, $ H $ contains $ \beta^{k-\ell+1}n^{b}/2 $ $ S $-absorbers, each of which is a $ b $-tuple which spans a family of $ 2k- 2\ell + 1 $ vertex-disjoint $ \ell $-paths.
\end{claim}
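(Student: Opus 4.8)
The plan is to construct, for each $S \in \mathcal{S}$, a large family of absorbers by concatenating a constant-length ``swapper chain'' to a single ``end-absorber''. First I would locate the end-absorber: since $(k-\ell)\nmid k$ when $1\le\ell<k/2$ (indeed $k/(k-\ell) < 2$ is not an integer for $\ell\ge1$, and $k=2\ell$ is excluded), Proposition~\ref{Prop1} gives a fixed $k$-partite $k$-graph $A(k,\ell)$ on $r\le k^4$ vertices with a distinguished $(k-\ell)$-set $S'_A$ and the required pair of spanning $\ell$-paths $P$ (on $X$) and $Q$ (on $S'_A\cup X$) sharing ordered ends $P^{beg}, P^{end}$. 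I would greedily embed $A(k,\ell)$ into $H$: property~(\ref{prop1_5}) (each edge meets $S'$ in at most one vertex) together with the minimum $\ell$-degree (or $(\ell+1)$-degree) condition and the supersaturation argument already used in Proposition~\ref{c} lets one embed the non-$S'$ part edge-by-edge and then choose the $k-\ell$ vertices of the image of $S'_A$ freely, giving $\Theta(n^r)$ choices. The role of $S'_A$ is that its image, call it $\{w_1,\dots,w_{k-\ell}\}$, is a $(k-\ell)$-set that $A(k,\ell)$ can ``free.''

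Next I would build the swapper chain linking $S=\{v_1,\dots,v_{k-\ell}\}$ to $\{w_1,\dots,w_{k-\ell}\}$. For each coordinate $i$ I want a constant-length $\ell$-path (a chain of swappers) whose interior can be realized either through $v_i$ or through $w_i$ with the same two ends. The key input is reachability: by Proposition~\ref{c} applied with $q=3$ every triple contains a $(\beta,1)$-reachable pair, and by the partition Lemma~\ref{part} the sets $V_1,V_2$ are $(\beta,2)$-closed; the definition of $\mathcal{S}$ (with the exponent $m$ chosen so that $|S\cap V_1|\le a$, $|S\cap V_2|\le k-a$ and $(a,k-a)$ is a $\mu$-robust edge-vector of $H'$) is exactly what guarantees that each $v_i$ and the correspondingly-placed $w_i$ lie in the same closed part, hence are $(\beta,2)$-reachable, contributing the factor $\beta$ and the $((2k-\ell)\cdot 2-1)$-vertex reachable set per coordinate. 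Concatenating the $k-\ell$ swapper chains and the end-absorber, and using the connecting lemmas (Lemma~\ref{con lem} when $d>\ell$, Lemma~\ref{conlem} when we use the $\ell$-degree hypothesis) to wire consecutive pieces into one coherent family of $2k-2\ell+1$ vertex-disjoint $\ell$-paths, yields an $S$-absorber that is a $b$-tuple with $b=(4k-2\ell-1)(k-\ell)+r$: the $(4k-2\ell-1)(k-\ell)$ counts the vertices used by the $k-\ell$ length-two-ish swapper segments plus connectors, and $r\le k^4$ the end-absorber. Counting choices: the swapper chains contribute $\prod_{i}\beta n^{(\text{const})} = \beta^{k-\ell}n^{(\cdots)}$, the end-absorber contributes $\ge \beta n^{r}$ (after absorbing constants), all vertex-disjointness conditions cost only lower-order terms since we pick vertices sequentially from a linear-sized pool, so the product is at least $\beta^{k-\ell+1}n^b/2$.

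The main obstacle I expect is bookkeeping the reachability placements: one must verify that when $i_{\mathcal{P}}(S)\in\{(m,k-\ell-m),(m-1,k-\ell-m+1)\}$, one can choose the $w_i$'s (the image of $S'_A$) to match each $v_i$'s part so that every pair $\{v_i,w_i\}$ is $(\beta,2)$-reachable, while simultaneously respecting that the $\mu$-robust edge-vector is $(a,k-a)$ and that $A(k,\ell)$'s vertex classes each contain at most one vertex of $S'$ (property~(6)) — this is where the precise value of $m$ and the inequality $0\le k-\ell-m\le k-a-1$ get used. The secondary nuisance is ensuring the $2k-2\ell+1$ paths are genuinely vertex-disjoint and that their ends match after concatenation; this is handled by the standard greedy argument (each embedding step forbids only $O(1)$ vertices, and there are only $O(1)$ steps, so a linear fraction of $V(H)$ always remains available), together with the connecting lemmas which are designed precisely to splice ordered $\ell$-sets. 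I would also remark that the two degree hypotheses are interchangeable for this purpose because $\delta_{\ell+1}(H)\ge(1/3+\eta)\binom{n}{k-\ell-1}$ implies $\delta_\ell(H)\ge(1/3+\eta)\binom{n}{k-\ell}$ by Fact~\ref{mind}, so it suffices to run the argument under a single ``$1/3+\eta$ at level $\ell$'' assumption plus, when needed, the sharper $(1/2)^{(k-\ell)/\ell}$ bound that powers Lemma~\ref{conlem}.
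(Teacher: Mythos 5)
Your overall skeleton is right — end-absorber $A(k,\ell)$ from Proposition~\ref{Prop1} embedded via supersaturation, plus $k-\ell$ swappers coming from $(\beta,2)$-reachability of each $v_i$ to a correspondingly-placed $w_i\in S'_A$ in the same closed part $V_1$ or $V_2$, with the index vector $(m,k-\ell-m)$/$(m-1,k-\ell-m+1)$ of $\mathcal S$ and the $\mu$-robust edge-vector $(a,k-a)$ tying the placements together — and your $\beta$-bookkeeping ($\beta^{k-\ell}$ from the swappers times $\beta$ from the end-absorber) and your identification of $4k-2\ell-1 = 2(2k-\ell)-1$ as the size of the $(\beta,2)$-reachable sets are correct. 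But there is a genuine structural error: you invoke the connecting lemmas to ``wire consecutive pieces into one coherent family of $2k-2\ell+1$ vertex-disjoint $\ell$-paths.'' No wiring is done (and none can be done while keeping the paths vertex-disjoint). The absorber is precisely a union of $2k-2\ell+1$ \emph{separate} $\ell$-paths: each $T_i\cup\{w_i\}$ (resp.\ $T_i\cup\{v_i\}$) already spans two vertex-disjoint $\ell$-paths of length two, for $2(k-\ell)$ paths, and the copy $A$ (resp.\ $A\setminus S'_A$) supplies the one remaining spanning $\ell$-path, giving $2k-2\ell+1$. The vertex count $b=(4k-2\ell-1)(k-\ell)+r$ has no room for ``connectors''; if you spliced the pieces with connecting lemmas the count would change and the claim would not state what you'd have proved. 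Connecting the pieces into a single $\ell$-path $P'$ happens later, in the body of the proof of Lemma~\ref{abslem}, after all the absorbers have been selected.

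A secondary imprecision is your embedding of $A(k,\ell)$: embedding the non-$S'$ part ``edge-by-edge'' and then choosing $S'_A$ ``freely'' does not give you the placement control you need, because the common link of already-embedded $(k-1)$-sets is not bounded below by the degree condition. The paper instead applies supersaturation directly to the subgraph of $H'$ consisting of the $\ge\mu n^k$ edges of index vector $(a,k-a)$; since $A(k,\ell)$ is $k$-partite, this produces $\beta n^{r}$ copies whose vertex classes land in prescribed parts, so $i_{\mathcal P}(S'_A)=i_{\mathcal P}(S)$ for every copy and one can pair $v_i$ with a same-part $w_i$. You correctly flag this as the key bookkeeping obstacle, but your proposed mechanism for resolving it doesn't close the gap.
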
 

We select a family $\mathcal{T}$ of $b$-tuples at random independently from $ H $ by including each ordered $b$-set $T$  with probability $ \beta^{k-\ell+2}n^{1-b}$. 
For a fixed $ S $, let $ A_S $ be the set of all members of $\mathcal{T}$, 
each of which is an $ S $-absorber.
By Claim~\ref{absorbers}, $ \mathbb{E}[| A_S|]\ge\beta^{2k-2\ell+3}n/2 $.
Moreover,   \[\mathbb{E}[|\mathcal{T}|]=\beta^{k-\ell+2}n^{1-b}\binom{n}{b}b!\le \beta^{k-\ell+2}n,\]
\[ \mathbb{E}[|(T,T'):T,T'\ \text{in}\ \mathcal{T} \text{are intersecting} |]\le b^{2}n^{2b-1}(\beta^{k-\ell+2}n^{1-b})^{2}=\beta^{2k-2\ell+4}b^2n\](the corresponding unordered sets intersect).
By the Chernoff bound, Markov's inequality and the union bound, we can fix an outcome of our random selection of  $\mathcal{T}$ satisfying the following properties:
\begin{enumerate}
\item for every $(k-\ell)$-set  $ S\in \mathcal{S} $,  $\mathcal{T}$  contains  at least	$\beta^{2k-2\ell+3}n/4$ $ S $-absorbers;
\item  $ |\mathcal{T}|\le 2\beta^{k-\ell+2}n $;
\item there are at most $ 2\beta^{2k-2\ell+4}b^2n $ overlapping members of $\mathcal{T}$.
\end{enumerate}	
We delete one set from each overlapping pairs of members of $\mathcal{T}$. 
Also delete from $\mathcal{T}$ every member of $ \mathcal{T}$ which is  not an $ S $-absorber for any $ S\in \mathcal{S} $. 
Then we obtain a family $ \mathcal{F} $ of $b$-tuples such that  $ |\mathcal{F}|\le 2\beta^{k-\ell+2}n $, each $b$-tuple is an $ S $-absorber for some $ S\in \mathcal{S} $ and for each $ S\in \mathcal{S} $, $ \mathcal{F} $ contains at least $ \beta^{2k-2\ell+3}n/4-2\beta^{2k-2\ell+4}b^2n\ge \beta^{2k-2\ell+4}n $ $ S $-absorbers.
By Claim~\ref{absorbers}, each  $ S $-absorber is a $b$-tuple which spans a family  of  $ 2k- 2\ell + 1 $ vertex-disjoint $ \ell $-paths.
Denote all these $ \ell $-paths in $ \mathcal{F} $ by $ \{P_1,P_2,\dots,P_q\} $, where $ q\le2\beta^{k-\ell+2}n(2k-2\ell+1)\le \beta^{k-\ell+1}n$.

Now we shall use the minimum $d$-degree condition of $H$ to greedily construct disjoint edges to cover all vertices in $ U\setminus \mathcal{F} $ while avoiding the vertices of  the paths $P_{i}, 1 \le i \le q$.  
Since $\delta_{d}(H)\ge (1/3+\eta) \binom{n}{k-d}$, we have $\delta_{1}(H)\ge (1/3+\eta) \binom{n}{k-1}$.
Note that $ | U\setminus \mathcal{F}|\le|U|\le 2\delta n $ and $ |V(\mathcal{F})|\le2\beta^{k-\ell+2}nb  $.
For any set $ U'\subseteq V(H) $ with $ |U'|\le 2\beta^{k-\ell+2}nb +2k\delta n $, $\delta_{1}(H\setminus U')\ge \frac{1}{3} \binom{n}{k-1}$.
Thus, we find a matching $ M=\{P_{q+1}, ..., P_{q+h}\} $ of size $ h:=|U\setminus F|\le 2\delta n $ such that $ V(M) $ does not intersect any of the paths $ P_1,\dots, P_q $ and each edge of $ M $ contains exactly one vertex of $ U\setminus F $.

Let $P^{beg}_{i}$ and $P^{end}_{i}$ be the ordered ends of $P_{i}$ for $1\le i \le q+h$.
We see that  
$\delta_{\ell}(H\setminus U')\ge \left((1/2)^{\frac{k-\ell}{\ell}}+\frac{\eta}{2} \right)\binom{n-|U'|}{k-\ell}$ or  $\delta_{\ell+1}(H\setminus U')\ge \left(1/3+\frac{\eta}{2}\right)\binom{n-|U'|}{k-\ell-1}$ holds for any vertex set $ U' $ with $ |U'|\le \gamma n/2 $.
As $ (q+h)(b+8k^{5})\le(\beta^{k-\ell+1}n+2\delta n+1)(b+8k^{5})\le \gamma n/2$,
we can use Lemma~\ref{conlem} or Lemma~\ref{con lem}  to greedily connect	each ordered $\ell$-set $P^{end}_{i}$ to $P^{beg}_{i+1}$ by an $\ell$-path $P'_{i}$  with $ |V(P'_{i})|\le 8k^5 $, such that $P'_{i}$ intersects $P_{i}$ and $P_{i+1}$ only in the sets $P^{end}_{i}$ and $P^{beg}_{i+1}$ and does not intersect any other $P_{j}$ or any previously chosen $P'_{j}$. 
Having found these $\ell$-paths, we obtain an $\ell$-path $P'$ as $P_{1}P'_{1}P_{2}P'_{2}\dots P_{q+h-1}P'_{q+h-1}P_{q+h}$ with
$ |V(P')|\le \gamma n/2 $.
Note that for any $ S\in \mathcal{S} $, $ P' $ contains at least $ \beta^{2k-2\ell+4}n $ mutually disjoint $ S $-absorbers.
Thus $ P' $ can greedily absorb a vertex set $ W\in V(H)\setminus V(P') $, if  $ (k-\ell)||W| $, $|W|\le  (k-\ell)\beta^{2k-2\ell+4}n $ and there exist nonnegative integers $ x,y $ such that $ i_{\mathcal{P}}(W)=x(m,k-\ell-m)+y(m-1,k-\ell-m+1) $.

Let $p:= \lfloor \theta n \rfloor $. 
Take $4p$ mutually disjoint vertex sets $ S_1, S_2,\dots, S_{2p}, T_1, T_2,\dots, T_{2p} $ from $ V(H)\setminus V(P') $  with $ i_{\mathcal{P}}(S_i)=(m,k-\ell-m) $ and $ i_{\mathcal{P}}(T_i)=(m-1,k-\ell-m+1) $ for $i\in [2p] $.
We denote the union of  these   $ S_i $ and $T_i $, $ i\in[2p] $ by $ R_1 $. 
So $ R_1\subseteq V(H)\setminus V(P') $, $ |R_1|=4(k-\ell)p $ and  $ i_{\mathcal{P}}( R_1)=2p(m,k-\ell-m)+2p(m-1,k-\ell-m+1) $.
Thus $ P' $ can absorb $  R_1 $, that is, there exists an $\ell$-path $ P $ with the same ordered ends as $ P' $, where $ V(P)=V(P')\cup R_1 $.
Now we show  that $ P $ is the desired absorbing $\ell$-path.
Note that $ |V(P)|\le \gamma n $.
Fix any set $X\subseteq V(H)\setminus V (P)$ with $ |X|\le p $ and $ (k-\ell)\mid |X| $ as required by the lemma.
Suppose further that $ i_{\mathcal{P}}(X)=(t,s)=x(m,k-\ell-m)+y(m-1,k-\ell-m+1)  $. 
Then we have 
\[\begin{cases}
	x=t-\frac{(m-1)(t+s)}{k-\ell}\\
	y=\frac{(t+s)m}{k-\ell}-t.
\end{cases} \]
Since $ t+s=|X| $, $ (k-\ell)\mid |X| $ and $ m/(k-\ell)\le1 $, we get that $ x,y $ are integers and $ |x|,|y|\le|X|< 2p $.
Thus $ i_{\mathcal{P}}(X\cup R_1)=(x+2p)(m,k-\ell-m)+(y+2p)(m-1,k-\ell-m+1) $, where $ x+2p>0, y+2p>0 $ and $ |X\cup R_1|\le4(k-\ell)p+p \le(k-\ell)\beta^{2k-2\ell+4}n $.
So $ P' $ can absorb $ X\cup R_1 $, that is,  $ P$ can absorb $ X $.
\end{proof}	

To complete the proof of Lemma~\ref{abslem}, it remains to prove Claim~\ref{absorbers}.
\begin{proof}[Proof of Claim~\ref{absorbers}]
 Let  $ S:=\{v_1,v_2,\dots,v_{k-\ell}\}$ be a $ (k-\ell) $-set. 
Fix a $ k $-partite $ k $-graph $A(k,\ell)$ on $ [V_{1}^{A},V_{2}^{A},\dots,V_{k}^{A}] $ satisfying Proposition~\ref{Prop1} with $ |A(k,\ell)|=r\le k^4 $ and $V(A(k,\ell))=S'\cup X$, where $|S'| = k-\ell $.
Without loss of generality, suppose $ |S'\cap V_{i}^{A}|=1$ for $ i\in[k-\ell] $.
Let $ b:=(4k-2\ell-1)(k-\ell)+r $.
Since $ (a,k-a)\in  I^{\mu}_{\mathcal{P}} (H') $, the number of edges whose index vectors are $ (a,k-a) $ is at least $ \mu n^{k} $.
Note that $ t_1:=|S\cap V_1|\le a$ and $|S\cap V_2|\le k-a$  by $ S\in \mathcal{S} $.
Since $ A(k,\ell) $ is $ k $-partite, by the supersaturation result (see \cite{Onextremal}) on the subgraph of $ H $ that consists of all edges of index vector $ (a, k-a) $, $ H $ contains $ \beta n^{r} $ copies of $ A(k,\ell) $ each with $ V_{1}^{A},\dots,V_{t_1}^{A}\subseteq V_1 $ and $ V_{t_1+1}^{A},\dots,V_{k-\ell}^{A}\subseteq V_2 $.
For such a copy $ A $ of $A(k,\ell)$, we denote by $ S'_A $ as the set of $ k-\ell $ vertices given in Proposition~\ref{Prop1} (2). 
So $ i_{\mathcal{P}}(S'_{A})=i_{\mathcal{P}}(S) $ for each such $ A $.

Consider a copy of $A(k,\ell)$ in $ H $ which we denote by  $ A $. 
Note that each of $ V_1,V_2 $ is $ (\beta, 2) $-closed in $ H $.
Without loss of generality, suppose $ S'_{A}=\{w_1,w_2,\dots,w_{k-\ell}\} $ such that $ v_i , w_i $ are  $ (\beta, 2) $-reachable for $ i\in[k-\ell] $ by $ i_{\mathcal{P}}(S'_{A})=i_{\mathcal{P}}(S) $.
By the definition of reachability, for each $ i\in[k-\ell] $, there are at least $ \beta n^{4k-2\ell-1} $ $ (4k-2\ell-1) $-sets $ T_i $ such that there exist   $ \ell $-paths $ P_i^{1},P_i^{2},P_i^{3},P_i^{4} $  with $ V(P_i^{1}\cup P_i^{2})= T_i\cup\{v_i\}$ and $ V(P_i^{3}\cup P_i^{4})= T_i\cup\{w_i\} $, where $ P_i^{1} $  has the same ends as $P_i^{2} $, and  $ P_i^{3} $  has the same ends as $P_i^{4} $.
So there are at least $ \beta^{k-\ell+1}n^{b} $ choices for $ A\cup T_1\cup T_2 \cup \dots \cup T_{k-\ell} $ as an ordered set.
Among them, at most $ (k-\ell)n^{b-1} $ of them intersect $ S $ and at most $ b^2n^{b-1} $ of them contain repeated vertices.
Thus there are at least $ \beta^{k-\ell+1}n^{b}/2 $  $ b $-tuples avoiding $ S $ such that $ A, T_1, T_2 ,\dots, T_{k-\ell} $ are pairwise vertex-disjoint.

Now it remains to show that the $ b $-tuple corresponding to $ A\cup T_1\cup T_2 \cup \dots \cup T_{k-\ell} $ is an $ S $-absorber.
Firstly, $ T_i\cup\{w_i\}$, $ i\in [k-\ell] $ spans two vertex-disjoint $ \ell $-paths of length two, which together with the spanning $ \ell $-path in  $ A\setminus\{w_1,w_2,\dots,w_{k-\ell}\} $ form a family of $ 2k-2\ell+1 $ $ \ell $-paths which span $ V(A)\cup T_1\cup T_2 \cup \dots \cup T_{k-\ell} $.
Secondly, $H[T_i\cup\{v_i\}]$ forms two vertex-disjoint $ \ell $-paths of length two for $ i\in [k-\ell] $, which together with the spanning  $ \ell $-path in $ A $ gives a family of $ 2k-2\ell+1 $ $ \ell $-paths which span $ S\cup V(A)\cup T_1\cup T_2 \cup \dots \cup T_{k-\ell} $ and have the same ends as the family of $ \ell $-paths above. 
So  the $ b $-tuple corresponding to $ A\cup T_1\cup T_2 \cup \dots \cup T_{k-\ell} $ is an $ S $-absorber (cf.  Figure~\ref{f3}).
\end{proof}

\subsection{Proofs of Theorem~\ref{thmh2} and Theorem~\ref{thmh}}
We prove Theorem~\ref{thmh2} by following the  common approach of absorption (cf. \cite{MR2652102,2008An}).	
That is, we decompose  the proof in the usual way into the absorbing path lemma, the reservoir lemma, the connecting lemma and the path cover lemma.
We will use these lemmas to find an absorbing path and a reservoir firstly, and then we cover the majority of vertices by vertex-disjoint $ \ell $-paths. 
We connect up all these $ \ell $-paths to form an $ \ell $-cycle. 
Finally, we absorb the leftover vertices into the absorbing path, thereby completing a Hamilton $\ell$-cycle.

We need the following path cover lemma, which states that the vertex set of any sufficiently large $ k $-graph satisfying the minimum degree condition can be covered by a constant number of vertex-disjoint $ \ell $-paths with a small leftover. 
\begin{lemma}  [Path cover lemma \cite{large}, Lemma 3.1]\label{patlem1}
For all integers $k\ge3$, $1\leq \ell<k/2$ and  $1\leq d\leq k-1$, suppose  $1/n \ll 1/D\ll \varepsilon \ll\mu,1/k$. 
Let $H$ be a $k$-graph on $n$ vertices with $\delta_d(H)\ge (t(k,d,\ell)+\mu) \binom{n}{k-d}$.
Then there is a family of at most $D$ vertex-disjoint $\ell$-paths covering all but at most $4\varepsilon n$ vertices of $H$.
\end{lemma}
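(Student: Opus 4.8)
The plan is to reduce, via hypergraph regularity, to an almost-spanning $Y_{k,2\ell}$-tiling of a bounded-size reduced $k$-graph, and then blow up each tile of that tiling into a single long $\ell$-path of $H$.

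First I would fix a regularity parameter $\varepsilon_1$ and a density cutoff $d_0$ with $\varepsilon_1\ll d_0\ll\varepsilon$, apply the weak hypergraph regularity lemma to $H$, and obtain an equitable partition $V(H)=V_0\cup V_1\cup\cdots\cup V_t$ with $|V_0|\le\varepsilon_1 n$, clusters of common size $m$, and $t$ bounded between a large constant $t_0$ (large enough for what follows) and a constant $T_0=T_0(\varepsilon_1)$; here $D$ is taken to be at least $T_0$, which is consistent with the hypothesis $1/D\ll\varepsilon$ because $D$ may be chosen after $\varepsilon$. Let $R$ be the reduced $k$-graph on $[t]$ whose edges are the $k$-sets of clusters spanning an $\varepsilon_1$-regular $k$-partite subhypergraph of density more than $d_0$. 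A routine counting argument --- for a fixed $d$-set of clusters, count the edges of $H$ with one vertex in each of them and the remaining $k-d$ vertices outside $V_0$, and discard the contributions of irregular tuples, low-density tuples and of $V_0$, each of which costs much less than $\mu$ --- gives $\delta_d(R)\ge\bigl(t(k,d,\ell)+\mu/2\bigr)\binom{t}{k-d}$.

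Since $t$ is large and $\mu/2$ exceeds the slack in the $\limsup$ definition of $t(k,d,\ell)$ at scale $\varepsilon/2$, the reduced graph $R$ contains a $Y_{k,2\ell}$-tiling $\{Z_1,\dots,Z_s\}$ missing at most $(\varepsilon/2)t$ of its vertices, with $s\le t\le T_0\le D$. The crux is to convert each tile $Z_j$ into one $\ell$-path $Q_j$ of $H$ nearly spanning the $2(k-\ell)$ clusters it occupies. Write $Z_j=e\cup e'$ with $|e\cap e'|=2\ell$, label the common clusters $W_1,\dots,W_{2\ell}$ and the private clusters $A_1,\dots,A_{k-2\ell}$ of $e$ and $B_1,\dots,B_{k-2\ell}$ of $e'$ (here $k-2\ell\ge1$ since $\ell<k/2$), and build $Q_j$ greedily so that its odd edges are drawn from the regular tuple of $e$ and its even edges from that of $e'$, with consecutive edges overlapping alternately in the $\ell$ vertices chosen from $\{W_1,\dots,W_\ell\}$ and in the $\ell$ vertices chosen from $\{W_{\ell+1},\dots,W_{2\ell}\}$. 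This zigzag is exactly what makes the resulting edge sequence a legitimate $\ell$-path, and --- this is the reason the building block is $Y_{k,2\ell}$ rather than $Y_{k,\ell}$ --- it makes every one of the $2(k-\ell)$ clusters lose a vertex once every two steps, so all clusters of $Z_j$ deplete in step. At each step the previous edge prescribes $\ell$ already-chosen vertices lying in $\ell$ clusters of the next tuple, and one must pick $k-\ell$ fresh vertices completing an edge whose trailing $\ell$-tuple is again ``good'' for the following step; the key observation is that the restriction of an $\varepsilon_1$-regular tuple to the still-unused vertices of its clusters remains $\sqrt{\varepsilon_1}$-regular (hence of density more than $d_0/2$) as long as each of these clusters retains more than $\sqrt{\varepsilon_1}\,m$ vertices, so such a completion always exists and the process runs until every cluster of $Z_j$ is used up to within $\sqrt{\varepsilon_1}\,m$ vertices of being empty.

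Finally, the $\ell$-paths $Q_1,\dots,Q_s$ are pairwise vertex-disjoint (they occupy disjoint sets of clusters) and number at most $D$; the vertices they miss are those of $V_0$ (at most $\varepsilon_1 n$), those in clusters untouched by the tiling (at most $(\varepsilon/2)n$), and the at most $\sqrt{\varepsilon_1}\,m$ residue in each of the boundedly many tiled clusters ($o(n)$ in total), so at most $4\varepsilon n$ vertices remain uncovered. I expect the blow-up step to be the main obstacle: one has to commit to a single zigzag pattern that is simultaneously a valid $\ell$-path and depletes all $2(k-\ell)$ clusters at the same rate, while maintaining throughout the greedy process that the trailing $\ell$-tuple produced at each step still has many fresh completions in the shrinking regular configuration --- which requires interleaving the standard regularity estimate (``almost all $\ell$-tuples have many completions'') with the bookkeeping of already-used vertices and the mild loss of regularity incurred by passing to unused vertices. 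By comparison, the degree inheritance for $R$ and the extraction of the tiling from the definition of $t(k,d,\ell)$ are routine.
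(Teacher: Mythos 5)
Your strategy---weak hypergraph regularity, an almost-perfect $Y_{k,2\ell}$-tiling of the reduced $k$-graph $R$, and a zigzag blow-up of each tile into a single long $\ell$-path---is the natural one for this lemma, and your zigzag analysis is right: alternating the $\ell$-overlap between two disjoint $\ell$-subsets of the $2\ell$ shared clusters is exactly what makes the edge sequence a valid $\ell$-path while depleting all $2(k-\ell)$ clusters at the same rate, and this is precisely why the tiling block must be $Y_{k,2\ell}$ rather than $Y_{k,\ell}$. The accounting of leftover vertices and the bound on the number of paths are also in order.

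The step that would fail as written is the one you call routine: obtaining $\delta_d(R)\ge (t(k,d,\ell)+\mu/2)\binom{t}{k-d}$ from the minimum $d$-degree of $H$. Weak hypergraph regularity controls only the \emph{total} number of irregular $k$-tuples of clusters (at most $\varepsilon_1\binom{t}{k}$); it says nothing about how these irregular tuples distribute over $d$-subsets of clusters. Since $\varepsilon_1\binom{t}{k}$ vastly exceeds $\binom{t-d}{k-d}$ once $t$ is large, it is entirely possible that for some fixed $d$-set $S$ of clusters \emph{all} the $k$-tuples containing $S$ are irregular, so discarding their contribution wipes out the whole degree at $S$ rather than costing ``much less than $\mu$''. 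To repair this you must either invoke a degree/cleaning form of the weak regularity lemma (delete irregular and sparse $k$-tuples from $H$, enlarge $V_0$ slightly, so that every surviving vertex loses at most $(\varepsilon_1+d_0)\binom{n}{k-1}$ of its degree, after which the reduced-graph degree bound is honest), or prove a robustness of the tiling threshold under a small exceptional set of $d$-sets of low degree. Either can be done, but it is a genuine step, not a one-line count. Your other worry, maintaining an ``extendable'' trailing $\ell$-tuple during the greedy blow-up, is well placed but here weak regularity is in fact sufficient once you additionally require at each step that the new $\ell$-tuple be chosen from the $(1-O(\sqrt{\varepsilon_1}))$-fraction that still has many completions in the sliced residual clusters.
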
	

We also need the reservoir lemma \cite{MR2652102} which guarantees the subhypergraph satisfying  the degree condition of the connecting lemma.	
\begin{lemma} [Reservoir lemma \cite{MR2652102}, Lemma 8.1] \label{res lem}  
Suppose that $k\ge2$,  $1 \le d \le k-1$ and  $1/n \ll \alpha ,\mu,1/k$. 
Let $H$ be a $k$-graph on $n$ vertices with $\delta_{d}(H) \ge \mu\binom{n}{k-d}$, and let $R$ be a subset of $V(H)$ of size $\alpha n$ chosen uniformly at random. 
Then with probability $1-o(1)$, $|N_{H}(S)\cap \binom{R}{k-d}|\ge \mu \binom{\alpha n}{k-d}-n^{k-d-1/3}$ holds for every $S \in  \binom{V(H)}{d}$.
\end{lemma}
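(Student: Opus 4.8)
The plan is to prove Lemma~\ref{res lem} by a routine first-moment computation followed by martingale concentration and a union bound. First I would fix a $d$-set $S\in\binom{V(H)}{d}$ and study the random variable $f(R):=|N_H(S)\cap\binom{R}{k-d}|$. It is convenient to realise $R$ as the set of the first $\alpha n$ vertices in a uniformly random ordering $\sigma$ of $V(H)$. For a fixed $(k-d)$-set $T\in N_H(S)$ we have $\Pr[T\subseteq R]=\binom{n-(k-d)}{\alpha n-(k-d)}/\binom{n}{\alpha n}$, and an elementary rearrangement of factorials shows this equals $\binom{\alpha n}{k-d}/\binom{n}{k-d}$. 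Summing over all $T\in N_H(S)$ and using $|N_H(S)|\ge\delta_d(H)\ge\mu\binom{n}{k-d}$ gives
\[
\mathbb{E}[f(R)]=|N_H(S)|\cdot\frac{\binom{\alpha n}{k-d}}{\binom{n}{k-d}}\ge\mu\binom{\alpha n}{k-d}.
\]

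Next I would prove that $f$ is tightly concentrated about its mean. Viewing $f$ as a function of the random ordering $\sigma$ and revealing the vertices in positions $1,2,\dots$ one at a time produces a Doob martingale $X_0,X_1,\dots,X_n$ with $X_0=\mathbb{E}[f(R)]$ and $X_n=f(R)$. Transposing the vertices in two positions of $\sigma$ alters $f$ only through the members of $N_H(S)$ that contain one of the two swapped vertices, and any single vertex lies in at most $\binom{n-1}{k-d-1}$ members of $N_H(S)$; hence $f$ has transposition-Lipschitz constant at most $2\binom{n-1}{k-d-1}=O(n^{k-d-1})$, which bounds all martingale increments by some $c=O(n^{k-d-1})$. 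Azuma's inequality then yields
\[
\Pr\!\left[f(R)<\mathbb{E}[f(R)]-n^{k-d-1/3}\right]\le\exp\!\left(-\frac{n^{2(k-d)-2/3}}{2nc^{2}}\right)=\exp\!\left(-\Omega(n^{1/3})\right).
\]

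Finally I would take a union bound over the at most $n^{d}$ choices of $S\in\binom{V(H)}{d}$. Since $n^{d}\exp(-\Omega(n^{1/3}))=o(1)$, with probability $1-o(1)$ every $S$ satisfies $f(R)\ge\mathbb{E}[f(R)]-n^{k-d-1/3}\ge\mu\binom{\alpha n}{k-d}-n^{k-d-1/3}$, which is exactly the assertion of the lemma. There is no serious obstacle here; the only point needing a little care is checking that the bounded-difference parameter $c=O(n^{k-d-1})$ is small enough relative to the deviation $n^{k-d-1/3}$ — the slack between the exponents $-1/3$ and $-1/2$ is a genuine positive power of $n$, so the Azuma tail is strong enough to absorb the $n^{d}$-fold union bound — and that the implicit constants in $1/n\ll\alpha,\mu,1/k$ are taken large enough to make the $o(1)$ terms legitimate.
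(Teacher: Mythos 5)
Your proposal is correct, but note that the paper does not actually prove Lemma~\ref{res lem}: it is quoted verbatim from K\"uhn, Mycroft and Osthus (the reference \cite{MR2652102}, Lemma~8.1) and used as a black box, so there is no in-paper proof to compare against. Your argument is the standard one and matches the spirit of the original: the expectation computation $\Pr[T\subseteq R]=\binom{\alpha n}{k-d}/\binom{n}{k-d}$ is right, the transposition--Lipschitz constant $2\binom{n-1}{k-d-1}=O(n^{k-d-1})$ for the vertex-exposure (random-permutation) Doob martingale is right, and the resulting Azuma tail $\exp(-\Omega(n^{1/3}))$ comfortably beats the $n^{d}$-fold union bound, exactly as you observe. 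The one thing worth spelling out if you were to write this in full is the permutation-martingale lemma you implicitly invoke, namely that a transposition-Lipschitz bound $c$ for $f$ translates into martingale increments $|X_i-X_{i-1}|\le c$ for the reveal-positions-one-at-a-time filtration (this is a routine coupling argument, but it is a lemma, not a tautology). With that reference included, the proof is complete and there are no gaps.
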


Now we give the proof of Theorem~\ref{thmh2} by combining the results as outlined above.
\begin{proof}[Proof of Theorem~\ref{thmh2}]
Suppose we have the constants satisfying the following hierarchy
\[
1/n\ll 1/D \ll \varepsilon \ll \alpha \ll\theta\ll \beta  \ll \gamma \ll \eta \ll 1/k
\]
and  assume that $n\in (k-\ell)\mathbb{N}$.
Let $ t:= \max\{t(k,d,\ell),1/3\}$.
Suppose that $H$ is a $k$-graph on $n$ vertices such that $\delta_d(H)\ge (t+\eta) \binom{n}{k-d}$. 
Applying Lemma~\ref{abslem}, we obtain an $ \ell $-path $P_{0}$  with $|V(P_0)|\le\gamma n$, such that $P_0 $  can absorb any set $S\subseteq V(H)\setminus V (P_0)$ with $ |S|\le\theta n $, $ (k-\ell)\mid |S| $.

Next let $R$ be a set of $\alpha n$ vertices of $ V(H) $ chosen uniformly at random.
Applying Lemma~\ref{res lem} to $H$, we obtain that with probability $1-o(1)$, 
\[\left |N_{H}(S)\cap  \binom{R}{k-d}\right| \ge (t+\eta/2)\binom{\alpha n}{k-d}\]
for every $S \in \binom{V(H)}{d}$. 
Since $\mathbb{E}[|R \cap V(P_{0})|] = \alpha|V(P_{0})|$, by Markov's inequality, with probability at least $1/2$,  we have $|R \cap V(P_{0})| \le2\gamma\alpha n$. 
Then we fix a choice of $R$ which has the two properties above. 

Let $V':=V(H)\setminus (R \cup V(P_{0}))$. 
Note that $|R \cup V(P_{0})|\le \gamma n+\alpha n$. The induced subhypergraph $H':=H[V']$ satisfies 
\[\delta_d(H')\ge (t+\eta)\binom{ n}{k-d}-(\gamma n+\alpha n)\binom{ n}{k-d-1}\ge (t+\eta/2)\binom{|V'|}{k-d}.\]
Applying Lemma~\ref{patlem1} to $H'$, we obtain  vertex-disjoint $\ell$-paths $P_{1},\dots,P_{q}$ that together
cover all but at most $\varepsilon n$ vertices of $H'$, where $ q\le D $.
Denote by $X$  the set of uncovered vertices. Thus $|X| \le \varepsilon n$.
We  denote  the ordered ends of $P_{i}$ by $P^{beg}_{i}$ and $P^{end}_{i}$, $0 \le i \le q$. 
Let $ P^{beg}_{q+1}:=P^{beg}_{0} $.
For $0 \le i \le q$, we now find vertex-disjoint $\ell$-paths $P'_{i}$ by Lemma~\ref{con lem} to connect $P^{end}_{i}$ and  $P^{beg}_{i+1}$, which actually connects $P_{i}$ and $P_{i+1}$. Note that  $V(P'_{i})\subseteq  (R \setminus V(P_{0})) \cup P^{end}_{i} \cup P^{beg}_{i+1}$ and $|V(P'_{i})|\le 8k^{5}$.
More precisely, suppose that we have chosen such $\ell$-paths  $P'_{0},\dots,P'_{i-1}$. Let $R_{i}=\left( P^{end}_{i} \cup P^{beg}_{i+1}\cup R \setminus V(P_{0})\right)\setminus \bigcup_{j=0}^{i-1}V(P'_{j})$.  
Thus 
\[\delta_d(H[R_{i}])\ge ( t +\eta/2)\binom{\alpha n}{k-d}-(8k^{5}D+2\gamma\alpha n) \binom{\alpha n}{k-d-1}\ge t\binom{|V(R_{i})|}{k-d}\]
and thus we may apply Lemma~\ref{con lem} with $ t $ in place of $ \mu $ to find a desired $\ell$-path  $P'_{i}$.

Let $C: = P_{0}P'_{0}P_{1}P'_{1}\cdots P_{q}P'_{q}$ be the $\ell$-cycle we have obtained so far and let $R'' := V(H)\setminus V(C)$. 
Then indeed $ R''=X\cup \left(R \setminus \left(V(P_{0})\cup\bigcup_{0\le i\le q}V(P_i')\right)\right)  $ and in particular, $ |R''| \le (\alpha+\varepsilon)n \le\theta n$. 
Since $k-\ell$ divides both $ n $ and $|V(C)|$, we have $(k-\ell) \mid|R''|$. 
So we can utilize the absorbing property of $ P_0 $ to get an $\ell$-path $Q_{0}$ with  $V(Q_{0})= V(P_{0})\cup R''$ such that  $P_0$ and $Q_{0}$ have the same ordered ends, obtaining a Hamilton $\ell$-cycle  $C' := Q_{0}P'_{0}P_{1}P'_{1}\cdots P_{q}P'_{q}$ in $H$.
\end{proof}
Proof of Theorem~\ref{thmh} follows verbatim as the proof of Theorem~\ref{thmh2}, after replacing Lemma~\ref{con lem} with Lemma~\ref{conlem}. Thus we omit it.

\section{Acknowledgements}
We would like to express our gratitude to the anonymous reviewers for  their valuable comments that greatly improved the presentation of this paper.
Jie Han was supported by  Natural Science Foundation of China (12371341).
 Guanghui Wang was supported by  National Key R\&D Program of China (2020YFA0712400), Natural Science Foundation of China (12231018) and Young Taishan Scholars probgram of Shandong Province (201909001).

	
\bibliographystyle{abbrv}
\bibliography{52HCfinal}

\begin{appendix}
\section{Proof of Theorem~\ref{52}}
In this section we prove Theorem~\ref{52}.
Considering the 5-graph $ H_{5,2} $ in Section 1 and Theorem~\ref{thmh}, we have 
\[  \frac{91}{216}\le h^{2}_{2}(5)\le \max\left\{(1/2)^{\frac{3}{2}},t(5,2,2),1/3\right\}.\] 
So it suffices to prove $t(5,2,2) \le \frac{91}{216} $.
We will prove the following theorem to determine $  t(5,2,2) $.
\begin{thm} \label{yth}
Let $ 0<1/n\ll\varepsilon\ll \eta $.	
Let $H$ be a 5-graph on $n$ vertices with \[\delta_{2}(H)\ge \left(\frac{91}{216}+\eta\right) \binom{n}{3}.\] 
Then $H$ contains a $Y_{5,4}$-tiling covering all but at most $\varepsilon n$ vertices.
In particular, $ t(5,2,2)= \frac{91}{216} $.
\end{thm}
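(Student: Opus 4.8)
The statement has two parts: the $Y_{5,4}$-tiling claim and the evaluation $t(5,2,2)=\tfrac{91}{216}$. Since the space-barrier bound~\eqref{eq:tlowerbd} already gives $t(5,2,2)\ge 1-(1-\tfrac1{2\cdot3})^{3}=\tfrac{91}{216}$, the second part follows from the first once we have it; so the plan is to prove the tiling claim, i.e.\ to show $t(5,2,2)\le\tfrac{91}{216}$, and then invoke~\eqref{eq:tlowerbd}.

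I would argue via a maximum tiling. Let $\mathcal{M}$ be a $Y_{5,4}$-tiling of $H$ of maximum size (every copy spans $6$ vertices, so this also maximises the number of covered vertices), and set $W:=V(H)\setminus V(\mathcal{M})$; the goal is $|W|\le\varepsilon n$. Suppose not. Maximality implies $H[W]$ contains no $Y_{5,4}$, i.e.\ no $4$-set lies in two edges of $H[W]$; hence $e(H[W])\le\tfrac15\binom{|W|}{4}$, and more usefully, for every pair $\{u,v\}\subseteq W$ the $3$-graph $\{T\in\binom{W}{3}:\{u,v\}\cup T\in E(H)\}$ is linear (no two triples share a pair), so it has at most $\tfrac13\binom{|W|}{2}=o(n^{3})$ edges. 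Combined with $\delta_{2}(H)\ge(\tfrac{91}{216}+\eta)\binom n3$ (and, for the lower uniformities, Fact~\ref{mind}), this shows that almost every edge through a pair of $W$ must meet $V(\mathcal{M})$, and by an averaging argument one can extract a rich ``bipartite'' pattern of edges between $W$ and $V(\mathcal{M})$, as well as of edges spanning several copies of $\mathcal{M}$.

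The next and crucial step is an augmentation: since $\mathcal{M}$ has maximum size, for no $j\ge1$ can $j$ copies of $\mathcal{M}$ together with $6$ vertices of $W$ be replaced by $j{+}1$ vertex-disjoint copies of $Y_{5,4}$ on the same $6j+6$ vertices (that would cover $6$ more vertices). One then uses supersaturation together with the Kruskal--Katona theorem (Theorem~\ref{KK}) to count, over a suitable bounded family of $\mathcal{M}$-copies and many $6$-subsets $Z\subseteq W$, the ways of partitioning the vertices involved into $6$-sets each spanning a $Y_{5,4}$ (equivalently, each $6$-set having two of its five $5$-subsets in $E(H)$); the edge counts from the previous paragraph force this number to be positive as soon as $|W|>\varepsilon n$, contradicting maximality and hence giving $|W|\le\varepsilon n$. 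The arithmetic is arranged so that $\tfrac{91}{216}=1-(1-\tfrac1{2\cdot3})^{3}$ is exactly the density at which this count turns positive, mirroring the space-barrier construction; locating this constant is the computational heart of determining $t(5,2,2)$.

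I expect the augmentation step to be the main obstacle, for two reasons. First, $H[W]$ can honestly be $Y_{5,4}$-free — in the extremal example $V\setminus A$ is an independent set of size about $\tfrac56 n$ — so the extra copy cannot be found inside $W$ and one is forced to use the $W$--$V(\mathcal{M})$ edges in a robust, supersaturated way. Second, a naive local swap tends merely to relocate uncovered vertices rather than to cover strictly more of them (replacing an edge's sixth vertex by a vertex of $W$ just returns that vertex to $W$), so one must work with several copies of $\mathcal{M}$ at once, in effect running a defect version of an auxiliary matching argument, and this bookkeeping is what makes the constant delicate. A technically different route would be to prove, by LP duality against the space-barrier cover, that the hypothesis forces a perfect fractional $Y_{5,4}$-tiling of $H$, and then to convert it into an almost-perfect integral tiling via weak hypergraph regularity; the obstacle then moves to the fractional bound, but it is still tight precisely at $\tfrac{91}{216}$. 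In either approach, once $|W|\le\varepsilon n$ is in hand, Theorem~\ref{yth} is proved, and together with~\eqref{eq:tlowerbd} it yields $t(5,2,2)=\tfrac{91}{216}$.
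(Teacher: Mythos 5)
Your high-level decomposition --- reduce to proving $t(5,2,2)\le\tfrac{91}{216}$ and then invoke~\eqref{eq:tlowerbd} for the matching lower bound --- agrees with the paper. But the route you propose for the upper bound is entirely different from the paper's, and it contains a real gap. The paper's appendix does no direct maximum-tiling analysis: it cites Lemma~\ref{ftot} (a regularity-type lemma from~\cite{large}) to pass from a size-$n/6$ fractional $Y_{5,4}$-tiling to an almost-perfect integral one under the given $2$-degree condition, then applies Lemma~\ref{tran} to transfer the fractional threshold $f_2^{n/6}(Y_{5,4},n)$ in the $5$-uniform $2$-degree setting to the fractional threshold $f_0^{n/6}(Y_{3,2},n-2)$ in the $3$-uniform density setting, and finally evaluates the latter via Theorem~\ref{thm1} from~\cite{32} with $\alpha=\tfrac{n}{6(n-2)}$, which yields $(\tfrac{91}{216}+o(1))\binom{n}{3}$ after a short calculation. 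The entire content of the paper's proof of Theorem~\ref{yth} is this chain of three reductions. You briefly mention the fractional/regularity route as an alternative in your last paragraph, but you do not develop it.

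The gap in your primary route is the augmentation step. Your opening observations are correct: maximality makes $H[W]$ $Y_{5,4}$-free, the link inside $W$ of any pair from $W$ is a linear $3$-graph and hence sparse, so nearly all edges through a pair of $W$ leave $W$. But you then assert that supersaturation plus Kruskal--Katona force, for $|W|>\varepsilon n$, a positive count of ways to repartition $j$ copies of $\mathcal{M}$ together with $6$ vertices of $W$ into $j+1$ disjoint copies of $Y_{5,4}$, with the threshold landing exactly at $\tfrac{91}{216}$. That assertion \emph{is} the theorem; it does not follow from the sparsity observations you made. You flag the difficulty yourself --- naive swaps only relocate uncovered vertices, so one must pool several $\mathcal{M}$-copies and run ``a defect version of an auxiliary matching argument'' --- but none of that bookkeeping is carried out, and no reason is given for why the count turns positive precisely at $\tfrac{91}{216}$ rather than at some other value. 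In the paper this arithmetic is encapsulated in Theorem~\ref{thm1}, established elsewhere by a dedicated extremal argument; to complete your route you would in effect need to reprove a variant of that result inline, and as written your proposal does not do so.
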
	

For $ p>0 $, fix two $k$-graphs,  $F$ of order $ p $, and $H$. 
Let $\mathcal F_{F,H} \subseteq \binom {V(H)}{p}$ be the family of $p$-sets in $V(H)$ that span a copy of $F$.
A \emph{fractional $F$-tiling} in $H$ is a function $\omega: \mathcal F_{F,H} \to [0,1]$ such that for each $v\in V(H)$ we have $\sum_{v\in e\in  \mathcal F_{F,H}}\omega(e) \le 1$. 
Then $\sum_{e\in  \mathcal F_{F,H}}\omega(e)$ is the \emph{size} of $\omega$. 
Such a fractional $F$-tiling is called perfect if it has size $n/p$.
Let $ 0\le d \le k-1 $ and $ 0\le s\le n/p $.
We denote by $f_d^s(F,n)$  the minimum $m$ so that every $n$-vertex $k$-graph $H$ with $\delta_d(H)\ge m$ has a fractional $F$-tiling of size $s$. 
In particular,  $\delta_0(H)=e(H)$.
Gan, Han, Sun and Wang \cite{large} proved the following two lemmas for fractional $Y_{k,b}$-tilings.
\begin{lemma}[\cite{large}, Lemma 7.2] \label{ftot}
Suppose $ 0<1/n\ll\varepsilon\ll \eta $. Let $ H $ be a $5 $-graph on $ n $ vertices with $\delta_{2}(H)\ge f_2^{n/6}(Y_{5, 4}, n)+\eta\binom{n}{3}$. 
Then $ H $ contains a $Y_{5,4}$-tiling covering all but at most $\varepsilon n$ vertices.
\end{lemma}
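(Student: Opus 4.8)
The plan is to run the standard ``the fractional relaxation is tight for almost-perfect tilings'' argument through hypergraph regularity, specialised to $Y_{5,4}$. Fix auxiliary constants $1/n\ll\varepsilon_1\ll d_0\ll\varepsilon\ll\eta$. First I would apply the weak regularity lemma for $5$-graphs to $H$ to obtain a partition $V(H)=V_0\cup V_1\cup\dots\cup V_t$ with $|V_0|\le\varepsilon_1 n$, clusters of a common size $m$, and all but at most $\varepsilon_1\binom t5$ of the $5$-tuples of clusters being $\varepsilon_1$-regular. Let $R$ be the reduced $5$-graph on $[t]$ whose edges are the $5$-sets of clusters that span an $\varepsilon_1$-regular $5$-tuple of density greater than $d_0$.

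Next I would transfer the minimum-degree hypothesis to $R$. Summing $\deg_H(\{u,v\})\ge\delta_2(H)$ over the $m^2$ pairs $(u,v)$ inside a fixed cluster pair $\{V_i,V_j\}$ and sorting the co-neighbour triples by which cluster-triple they span, the edges of $H$ that land in irregular or low-density $5$-tuples through $\{V_i,V_j\}$, that meet $V_0$, or whose five vertices are not in five distinct clusters number at most $(d_0+\varepsilon_1^{1/2}+o(1))\binom t3 m^5$ for all but $o(t^2)$ pairs $\{V_i,V_j\}$; deleting the few exceptional clusters then gives $\delta_2(R)\ge\bigl(\delta_2(H)/\binom n3-d_0-o(1)\bigr)\binom t3\ge\bigl(f_2^{n/6}(Y_{5,4},n)/\binom n3+\eta-d_0-o(1)\bigr)\binom t3$. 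Since the normalised perfect-fractional-$Y_{5,4}$-tiling threshold is stable under the scale change $n\mapsto t$ (a standard blow-up argument), this is at least $f_2^{t/6}(Y_{5,4},t)$, so by definition $R$ admits a perfect fractional $Y_{5,4}$-tiling $\omega$ of size $t/6$, which by LP basicness may be assumed supported on $O(t)$ copies.

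The core of the proof is realising $\omega$ inside $H$. For each $6$-set of clusters $e=\{U_1^e,\dots,U_6^e\}$ in the support of $\omega$ of weight at least some small threshold — with the two pattern edges of $R$ being $U_1^eU_2^eU_3^eU_4^eU_5^e$ and $U_2^eU_3^eU_4^eU_5^eU_6^e$ — I reserve $\lfloor\omega(e)m\rfloor$ vertices from each of its six clusters, the reservations disjoint across distinct $e$; this is feasible since $\sum_{e\ni U}\omega(e)=1$ for every cluster $U$, and the $O(t)$ light copies contribute only $o(n)$ reserved vertices in total. Inside this $6$-partite piece I greedily pack vertex-disjoint copies of $Y_{5,4}$ of two shapes, in alternation: shape I takes two vertices of $U_1^e$ and one of each of $U_2^e,\dots,U_5^e$, so that its two $5$-edges of $H$ share exactly the four $U_2^e$--$U_5^e$ vertices; shape II takes two vertices of $U_6^e$ and one of each of $U_2^e,\dots,U_5^e$. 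Each greedy step succeeds because, while at least $o(m)$-many but $\Omega(m)$-many vertices remain in every relevant part, $\varepsilon_1$-regularity of the corresponding $5$-tuple forces edge density $\ge d_0-\varepsilon_1$ across those remaining parts, so the average number of ways a $4$-vertex trace extends into the ``doubled'' part is $\Omega(m)$, whence some trace extends in at least two ways and yields a copy of the required shape. Since each alternating pair of copies deletes two vertices from $U_1^e$, two from $U_6^e$ and two from each of $U_2^e,\dots,U_5^e$, the six parts stay balanced and the packing continues until only $o(m)$ vertices remain in each reserved sub-part. Unioning over all $e$, the only uncovered vertices are $V_0$ together with an $o(n)$ remainder, which is at most $\varepsilon n$ in all; this is the desired $Y_{5,4}$-tiling.

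The step I expect to be the real obstacle is the realisation, and in particular the tension around the density threshold $d_0$: it must be small (so that $R$ retains the fractional-tiling hypothesis), but then weak regularity only records edge densities rather than the full quasirandom structure, which is exactly why each copy of $Y_{5,4}$ must be built with a ``doubled'' cluster inside a single regular $5$-tuple instead of transversally across all six clusters — and the whole point of alternating shapes I and II is to make the six cluster budgets of $e$ balance so that almost all reserved vertices get used. A secondary technical point is the stability of the normalised fractional threshold between the scales $n$ and $t$; this is routine.
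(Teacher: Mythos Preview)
The paper does not prove this lemma; it is imported from \cite{large}, so there is no in-paper proof to compare against. Your overall plan---weak regularity, pass the $\delta_2$ condition to the reduced $5$-graph $R$, obtain a perfect fractional $Y_{5,4}$-tiling of $R$, then realise it in $H$---is the standard route, and your realisation step via the alternating ``doubled-cluster'' shapes I and II is correct and neatly balances the six cluster budgets attached to each supported copy.

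The one place I would push back is the step you yourself flag as ``secondary'' and ``routine'': the claim that the normalised threshold $f_2^{n/6}(Y_{5,4},n)/\binom{n}{3}$ is stable under $n\mapsto t$ by ``a standard blow-up argument''. This justification does not work as stated. The blow-up $R(n/t)$ of a $t$-vertex $5$-graph has $\delta_2=0$, because any two vertices in the same blow-up class lie in no transversal $5$-edge; so the minimum $2$-degree hypothesis does not survive blow-up, and you cannot manufacture an $n$-vertex near-extremal example from a $t$-vertex one this way. In the other direction the blow-up acquires exactly the non-transversal copies of $Y_{5,4}$ that you yourself exploit in shapes I and II, so a perfect fractional tiling of the blow-up need not descend to one of $R$ either. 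The stability you need is nevertheless true, but the honest argument is by random \emph{sampling}: taking a uniform $t$-subset of a near-extremal $N$-vertex construction preserves both the normalised $\delta_2$ (by concentration) and the non-existence of a perfect fractional tiling (restrict the LP dual witness), and running this along $N\to\infty$ shows the sequence $f_2^{m/6}(Y_{5,4},m)/\binom{m}{3}$ converges. Alternatively---and this is arguably cleaner---you can bypass the scale comparison entirely: the hypothesis literally says $H$ already has a perfect fractional $Y_{5,4}$-tiling, so rather than re-deriving one in $R$ from $\delta_2(R)$, spend the $\eta$ slack to delete from $H$ all edges lying in irregular, sparse, $V_0$-meeting or non-transversal $5$-tuples, observe the cleaned graph still exceeds the fractional threshold at scale $n$, and project \emph{its} fractional tiling to the cluster structure directly.
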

\begin{lemma}[\cite{large}, Lemma 7.1] \label{tran}
For $n \ge 5$, we have $f_2^{n/6}(Y_{5, 4}, n) \le f_0^{n/6}(Y_{3,2}, n-2)$.
\end{lemma}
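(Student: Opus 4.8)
The plan is to exploit the identity $Y_{5,4}=Y_{3,2}+\{x,y\}$, i.e.\ that $Y_{5,4}$ arises from $Y_{3,2}$ by adding two vertices to every edge. For a pair $\{x,y\}\subseteq V(H)$, let $L_{xy}$ be the link $3$-graph on $V(H)\setminus\{x,y\}$, whose edges are the $3$-sets $T$ with $T\cup\{x,y\}\in E(H)$. Then $|E(L_{xy})|=\deg_H(\{x,y\})\ge\delta_2(H)\ge f_0^{n/6}(Y_{3,2},n-2)$, so by the very definition of $f_0^{n/6}(\cdot)$ the $(n-2)$-vertex $3$-graph $L_{xy}$ admits a fractional $Y_{3,2}$-tiling of size $n/6$; moreover each $4$-set in $\mathcal F_{Y_{3,2},L_{xy}}$, together with $\{x,y\}$, spans a copy of $Y_{5,4}$ in $H$. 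The remaining task is to amalgamate these per-pair objects into a single fractional $Y_{5,4}$-tiling of $H$ of size $n/6$.

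I would carry out the amalgamation through the dual linear program: it suffices to show that every weighting $c\colon V(H)\to\mathbb R_{\ge0}$ with $\sum_{v\in Z}c_v\ge1$ for every copy $Z$ of $Y_{5,4}$ in $H$ satisfies $\sum_v c_v\ge n/6$. Fix such a $c$ and any pair $\{x,y\}$. If $c_x+c_y<1$, then $c|_{V\setminus\{x,y\}}$ divided by $1-c_x-c_y$ is a fractional vertex cover of the copies of $Y_{3,2}$ in $L_{xy}$ (because $\sum_{v\in Z}c_v\ge1$ for the lifted $Y_{5,4}$-copy $Z$), so by LP duality and the size-$n/6$ fractional $Y_{3,2}$-tiling of $L_{xy}$ obtained above one gets $\sum_{v\ne x,y}c_v\ge\frac n6(1-c_x-c_y)$; this inequality is trivial when $c_x+c_y\ge1$. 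Equivalently, $\sum_v c_v\ge\frac n6+(c_x+c_y)\bigl(1-\frac n6\bigr)$ for every one of the $\binom n2$ pairs. The concluding step is to deduce $\sum_v c_v\ge n/6$ from this family of inequalities, using that every vertex lies in exactly $n-1$ pairs (so averaging over all pairs balances their contribution) together with the elementary bound $\sum_v c_v\ge\frac{n-1}{2}\min_{u\ne w}(c_u+c_w)$.

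The main obstacle is precisely this concluding step: the per-pair inequalities by themselves only force $\sum_v c_v\ge\bigl(\tfrac18-o(1)\bigr)n$ (e.g.\ $c\equiv\tfrac18$ satisfies all of them), which is short of the target $n/6$ that the construction $H_{5,2}$ shows to be tight. Closing this gap requires using not merely that each link has a large fractional $Y_{3,2}$-cover but that it genuinely contains a fractional $Y_{3,2}$-tiling of size $n/6$. In the primal picture this amounts to choosing, for each pair $\{x,y\}$, a fractional $Y_{3,2}$-tiling of $L_{xy}$ of size $n/6$ that is sufficiently spread out over $V(H)\setminus\{x,y\}$ so that the uniform superposition over all $\binom n2$ pairs of the lifted copies of $Y_{5,4}$ keeps the load at every vertex at most $1$ -- note that this superposition automatically contributes load exactly $\tfrac13$ at every vertex from the ``core-pair'' role and has total load $n$, so only the uniform control of the remaining $\tfrac23$ per vertex is at stake. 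I would obtain the required choice of per-link tilings by optimizing them against a worst-case dual weighting and invoking LP duality a second time (equivalently, a min--max argument); this is where the real content of the lemma lies.
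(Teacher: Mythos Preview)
The paper does not prove this lemma: it is quoted from \cite{large} (Lemma~7.1 there), so there is no in-paper argument to compare your attempt against. I will therefore assess the proposal on its own terms.

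The opening reduction is correct: for each pair $\{x,y\}$ the link $L_{xy}$ is an $(n-2)$-vertex $3$-graph with $e(L_{xy})\ge\delta_2(H)\ge f_0^{n/6}(Y_{3,2},n-2)$, hence carries a fractional $Y_{3,2}$-tiling of size $n/6$, and every $Y_{3,2}$-copy in $L_{xy}$ lifts to a $Y_{5,4}$-copy in $H$. The LP-duality reformulation and the per-pair inequality $\sum_v c_v\ge \tfrac{n}{6}+(c_x+c_y)\bigl(1-\tfrac{n}{6}\bigr)$ are also correct. One small correction: your witness $c\equiv\tfrac18$ does \emph{not} satisfy these inequalities. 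Averaging them over all pairs gives only $\sum_v c_v\ge n^2/(8n-12)$, attained with equality by $c\equiv n/(8n-12)$; but this $c$ is not even a valid $Y_{5,4}$-cover (each $6$-set gets weight $6n/(8n-12)<1$). So the loss occurs in passing from the cover condition to the per-pair inequalities, not in the latter themselves.

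The real problem is the proposed repair. Since a size-$n/6$ fractional $Y_{5,4}$-tiling of an $n$-vertex $5$-graph is \emph{perfect} (total load $6\cdot\tfrac{n}{6}=n$ spread over $n$ vertices each of load $\le1$), the uniform superposition you describe would need the non-core load at every vertex to be \emph{exactly} $2/3$: as you note, the average non-core load is already $2/3$, so any shortfall at one vertex forces an excess at another. You would therefore have to exhibit, simultaneously across all $\binom{n}{2}$ links, size-$n/6$ tilings whose aggregate off-pair loads are \emph{perfectly} equidistributed over $V(H)$. A generic min--max or second-duality step produces inequalities, not this kind of rigid system of equalities, and nothing in the bare hypothesis $e(L_{xy})\ge f_0^{n/6}(Y_{3,2},n-2)$ tells you where the load of an optimal link-tiling can be placed. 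Your outline thus stops at precisely the point you flag as ``the real content of the lemma'', and the mechanism you invoke is not by itself enough to close it.
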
	

By Lemma~\ref{ftot}, to prove Theorem~\ref{yth}, it suffices to find an almost perfect fractional $Y_{5,4}$-tiling under the minimum 2-degree condition. 
Then by Lemma~\ref{tran}, we transform this problem to finding a large $Y_{3,2}$-tiling under density condition, so that we can apply the following theorem in Han, Sun and Wang \cite{32}.
\begin{thm}\label{thm1}
For every $ \alpha, \gamma\in (0,1/4) $ there exists an integer $ n_{0} $ such that the following holds for every integer $n\ge n_0$. 
Let $H$ be a 3-graph on $n$ vertices such that
\[\label{ec} e(H)\ge \max \left \{ \binom{4\alpha n}{3}, \binom{n}{3}-\binom{n-\alpha n}{3} \right \}+\gamma n^3.\]
Then $H$ contains a $Y_{3,2}$-tiling covering more than $ 4\alpha n$ vertices.		
\end{thm}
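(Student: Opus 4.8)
The plan is to argue by contradiction, analysing a maximum $Y_{3,2}$-tiling. Suppose $H$ is a $3$-graph on $n$ vertices with $e(H)\ge\max\{\binom{4\alpha n}{3},\binom{n}{3}-\binom{n-\alpha n}{3}\}+\gamma n^{3}$, but no $Y_{3,2}$-tiling covers more than $4\alpha n$ vertices. Fix a maximum $Y_{3,2}$-tiling $\mathcal{M}=\{Y_{1},\dots,Y_{t}\}$; then $t\le\alpha n$, and with $A:=V(\mathcal{M})$ (so $|A|=4t$) and $B:=V(H)\setminus A$, maximality forces $H[B]$ to contain no copy of $Y_{3,2}$, hence $H[B]$ is linear and has at most $\binom{|B|}{2}/3=o(n^{3})$ edges. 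So, up to an $o(n^{3})$ error, every edge of $H$ meets $A$, and the task is to bound the numbers of edges with exactly one, two or three vertices in $A$.

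First I would isolate a small ``core''. For $x\in A$ let $G_{x}$ be the graph on $B$ with $\{b,b'\}\in G_{x}\iff\{x,b,b'\}\in E(H)$, and call $x$ \emph{rich} if $e(G_{x})\ge\mu n^{2}$, where $1/n\ll\mu\ll\gamma$. Since a graph with $\mu n^{2}$ edges contains $\Omega(n)$ vertex-disjoint paths on three vertices, two rich vertices lying in a common $Y_{i}$ would let us pick such a path in each of their link-graphs (one avoiding the other), producing two vertex-disjoint copies of $Y_{3,2}$ inside $H[V(Y_{i})\cup B]$; replacing $Y_{i}$ by these enlarges $\mathcal{M}$, a contradiction. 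Hence the set $W$ of rich vertices satisfies $|W|\le t\le\alpha n$. In particular the edges disjoint from $W$ with at most one vertex in $A$ number at most $o(n^{3})+\sum_{x\in A\setminus W}e(G_{x})\le o(n^{3})+4\alpha\mu n^{3}<\gamma n^{3}/4$, while the edges meeting $W$ number at most $\binom{n}{3}-\binom{n-|W|}{3}\le\binom{n}{3}-\binom{n-\alpha n}{3}$.

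It remains to bound, among edges disjoint from $W$, those with three vertices in $A\setminus W$ (i.e. $e(H[A\setminus W])$) and the crossing edges $\{x,y,b\}$ with $x,y\in A\setminus W$, $b\in B$; this is the heart of the argument. For the crossing edges, observe that a pair $\{x,y\}\subseteq A$ lying in at least two edges $\{x,y,b\},\{x,y,b'\}$ with $b,b'\in B$ is the core of a copy of $Y_{3,2}$; calling such pairs \emph{good}, any $s$ vertex-disjoint good pairs yield $s$ vertex-disjoint copies of $Y_{3,2}$, so $s\le\alpha n$, and combining a maximum matching of good pairs with $\mathcal{M}$ — a set of disjoint good pairs touching fewer members of $\mathcal{M}$ than its own size lets us discard those members, insert the corresponding $Y_{3,2}$'s, and retile the freed vertices, enlarging $\mathcal{M}$ — one forces the auxiliary multigraph recording which $Y_{i}$'s these good pairs meet to be a pseudoforest, pinning their structure down enough to bound the number of crossing edges by $o(n^{3})$. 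Finally one treats $e(H[A\setminus W])$ by a dichotomy on how dense $H[A]$ is: if $e(H[A])$ is within $\gamma n^{3}/2$ of $\binom{4\alpha n}{3}$ then, since $|A|\le4\alpha n$, $H[A]$ is nearly complete, and the same ``disjoint copies plus retiling'' idea forces both $W$ and all remaining edge-types to contribute only $o(n^{3})$, giving $e(H)\le\binom{4t}{3}+o(n^{3})<\binom{4\alpha n}{3}+\gamma n^{3}$; otherwise $e(H[A])$ lies well below $\binom{4\alpha n}{3}$, and a finer iteration of the core-extraction inside $A$ (peeling off a complete-like piece and adding its few ``entry points'' to $W$) keeps $|W|\le\alpha n$ while making the part of $H$ not covered by $W$ of size $o(n^{3})$, so that $e(H)\le\binom{n}{3}-\binom{n-\alpha n}{3}+\gamma n^{3}$. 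Either way the hypothesis on $e(H)$ is contradicted.

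The hard part is exactly this last step: $Y_{3,2}$-tilings have two essentially different extremal configurations — a complete sub-$3$-graph on $4\alpha n$ vertices, and a vertex cover of size $\alpha n$ — each meeting one of the two barriers while hugely violating the other, so no direct edge count against a single barrier can work, and the crossing edges with two vertices in $V(\mathcal{M})$ cannot be controlled by any local augmentation of $\mathcal{M}$ (the space-barrier extremal graph already has $\Omega(n^{3})$ of them). One must instead let the structure of the maximum tiling — the rich ``core'' $W$ and the ``good-pair'' pseudoforest — dictate which of the two regimes one is in, and handle the interpolation between the regimes through the combined packing-and-retiling argument and the iterated core-extraction sketched above; an alternative route would be to apply weak hypergraph regularity and reduce the whole question to a density-tiling optimization on the reduced $3$-graph, where the two barriers reappear as the two extremal solutions.
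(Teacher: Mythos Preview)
The paper does not prove this theorem at all: it is quoted from Han, Sun and Wang~\cite{32} and used as a black box in the proof of Theorem~\ref{yth}. So there is no ``paper's own proof'' against which to compare your proposal.

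That said, your outline follows the natural extremal template (maximum tiling, rich-core extraction, two regimes corresponding to the two barriers), and it correctly identifies the two extremal constructions and why neither single barrier suffices. But as written it is a sketch with real gaps, which you yourself flag. Two concrete issues: first, in the good-pair swap you claim that $s$ vertex-disjoint good pairs meeting fewer than $s$ members of $\mathcal{M}$ can be exchanged to enlarge the tiling; however the $2s$ endpoints of those pairs lie inside the very $Y_i$'s you propose to delete, and with only ``$\ge 2$'' $B$-neighbours per pair you cannot guarantee $s$ \emph{pairwise disjoint} new copies of $Y_{3,2}$ (you need either linearly many $B$-neighbours per pair, or a more careful matching argument). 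Second, the ``pseudoforest'' conclusion does not by itself give $o(n^3)$ crossing edges: a pair that is not good may still carry one edge to $B$, so $\Theta(n^2)$ non-good pairs in $A\setminus W$ can contribute $\Theta(n^2)$ crossing edges, which is fine, but the good pairs that survive your matching bound can each carry up to $|B|$ edges, and a pseudoforest on $t$ tiles gives only $O(t)$ such pairs---you need to say why this yields $o(n^3)$ rather than $O(\alpha n\cdot n)=O(n^2)$ (which is in fact what you want, but the text says $o(n^3)$ without making the arithmetic explicit). Finally, the ``iterated core-extraction'' inside $A$ is asserted rather than argued; this is exactly the delicate interpolation between the clique regime and the cover regime, and it is where the actual work in~\cite{32} lies.
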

\begin{proof}	[Proof of Theorem~\ref{yth}]
Suppose that we have constants such that  $0<1/n\ll  \varepsilon \ll \eta $.
Let $H$ be a $5$-graph on $n$ vertices such that $\delta_2(H)\ge (\frac{91}{216}+\eta) \binom{n}{3}$.  
By Lemma~\ref{tran} and  Theorem~\ref{thm1}  with $ \eps $ in place of $ \gamma $ and $ \alpha=\frac{n}{6(n-2)} $, we have 
\[\begin{aligned} 
	f_2^{n/6}(Y_{5, 4}, n) &\le f_0^{n/6}(Y_{3,2}, n-2)\\&\le \max\left\{\binom{\frac{2n^2}{3(n-2)}}{3},\binom{n-2}{3}-\binom{(n-2)\left(1-\frac{n}{6(n-2)}\right)}{3}\right\}+\varepsilon (n-2)^3\\
	&\le \left(\frac{91}{216}+\frac{\eta}{2}\right) \binom{n}{3}.
\end{aligned}\]
Thus we have $ \delta_{2}(H)\ge (\frac{91}{216} +\eta)\binom{n}{3}\ge	f_2^{n/6}(Y_{5, 4}, n)+\frac{\eta}{2} \binom{n}{3}. $ 
Applying   Lemma~\ref{ftot} with $ \eta/2 $ in place of $ \eta $, we obtain a $Y_{5,4}$-tiling covering all but at most $\varepsilon n$ vertices.                      

In particular, by the definition of $t(5,2,2)$,  we get $ t(5,2,2)\le \frac{91}{216}$.
By considering the 5-graph $ H_{5,2} $ in Section 1 and Theorem~\ref{thmh}, we have 
$   \frac{91}{216}\le h^{2}_{2}(5)\le t(5,2,2) $.
Thus $ t(5,2,2)= \frac{91}{216}$.
\end{proof}

\end{appendix}	
\end{document}